\documentclass[a4paper,twoside]{article}

\usepackage[english]{babel}
\usepackage[T1]{fontenc}
\usepackage{lmodern}

\usepackage{csquotes}

\usepackage{mathtools, amsmath, amssymb, amsthm, colonequals, cases}
\usepackage[titletoc,title]{appendix}
\usepackage{fullpage, microtype}
\usepackage{hyperref}
\usepackage[nameinlink]{cleveref}
\usepackage{fancyhdr}

\title{On the Invertibility of the Potential-Density Mapping for the Vlasov--Poisson System in Analytic Spaces}
\author{Simon Le Bouëdec\footnote{Université Paris-Saclay, CNRS, Laboratoire de Mathématiques d'Orsay, 91405 Orsay CEDEX, France, \\ Email address: simon.le-bouedec@universite-paris-saclay.fr}}

\date{}

\AtBeginEnvironment{appendices}{\crefalias{section}{appendix}}

\newtheorem{theorem}{Theorem}
\newtheorem{lemma}[theorem]{Lemma}
\newtheorem{proposition}[theorem]{Proposition}

\newtheorem{claim}{Claim}
\theoremstyle{definition}
\newtheorem{definition}[theorem]{Definition}
\newtheorem{remark}[theorem]{Remark}

\Crefname{claim}{Claim}{Claims}

\DeclareMathOperator{\ext}{\mathrm{ext}}
\DeclareMathOperator{\dd}{\mathrm{d}\!}

\newcommand{\rom}[1]{\textup{\uppercase\expandafter{\romannumeral#1}}}
\newcommand{\I}{\rom{1}}
\newcommand{\II}{\rom{2}}
\newcommand{\III}{\rom{3}}

\DeclarePairedDelimiter\abs{\lvert}{\rvert}
\DeclarePairedDelimiter\norm{\lVert}{\rVert}
\newcommand{\normop}[1]{{\left\vert\kern-0.25ex\left\vert\kern-0.25ex\left\vert #1\right\vert\kern-0.25ex\right\vert\kern-0.25ex\right\vert}}
\DeclarePairedDelimiterX{\jap}[1]{\langle}{\rangle}{#1}
\DeclarePairedDelimiterX{\inp}[2]{\langle}{\rangle}{#1, #2}

\usepackage{scalerel,stackengine}
\stackMath
\newcommand\ultrawidehat[1]{%
\savestack{\tmpbox}{\stretchto{%
  \scaleto{%
    \scalerel*[\widthof{\ensuremath{#1}}]{\kern-.6pt\bigwedge\kern-.6pt}%
    {\rule[-\textheight/2]{1ex}{\textheight}}
  }{\textheight}%
}{0.5ex}}%
\stackon[1pt]{#1}{\tmpbox}%
}
\allowdisplaybreaks

\begin{document}

\maketitle

\begin{abstract}
    We develop the mathematical analysis of the Time-Dependent Density Functional Theory introduced by Manfredi [\textit{J. Plasma Phys.}, 86.2 (2020), 825860201] in the setting of Vlasov equations with a time-dependent exterior potential. In this context, we establish the invertibility of the potential-density mapping for the Vlasov--Poisson system on \(\mathbb{T}^d \times \mathbb{R}^d\) when the exterior potential is (real-)analytic in the space variable. We also recover a Runge--Gross type Theorem for the Vlasov--Poisson equation for smooth (real-)analytic in time exterior potential.

    The surjectivity of the potential-density mapping is an inverse problem for the Vlasov--Poisson equation: we find the electric field that generates a given density profile on a given time interval.
\end{abstract}

\tableofcontents

\section{Introduction}

In this paper, we study the potential-density mapping for the Vlasov--Poisson system describing the dynamics of a collisionless plasma. This mean-field equation describes the evolution of the distribution \(f(t, x, v)\) of electrons in phase-space \([0, T]\times\mathbb{T}^d \times \mathbb{R}^d\), for \(T>0\) and \(d \in \mathbb{N}^*\), which satisfies
\begin{equation}\label{eq_vlasov-poisson}
    \begin{cases}
        \partial_t f + v \cdot \nabla_x f + \nabla_x V \cdot \nabla_v f+\nabla_x V^{\ext} \cdot \nabla_v f = 0, \\
        -\Delta_x V = \int_{\mathbb{R}^d} f \dd v - 1,                                                          \\
        f|_{t=0} = f^0,
    \end{cases}
\end{equation}
where the forces applied to the system are generated by the self-induced interaction potential \(V(t, x)\) and the given time-dependent exterior potential \(V^{\ext}(t, x)\). The initial condition, \(f^0(x, v)\), is fixed throughout this paper and can be taken nonnegative satisfying
\[
    \int_{\mathbb{T}^d}\int_{\mathbb{R}^d}f^0(x, v)\dd x\dd v=1.
\]
We also define the spatial density of \(f\) by \(\rho[f](t, x) \colonequals \int_{\mathbb{R}^d} f(t, x, v) \dd v\).

We wish to determine whether the spatial density of particles fully characterizes the exterior potential on a given time interval, i.e., whether the so-called potential-density mapping,
\[
    \nabla_x V^{\ext} \longmapsto \rho[f]\qquad\text{for a given time interval},
\]
specifically identifying the functional spaces for which this mapping is bijective. It is natural to consider \(\nabla_xV^{\ext}\) rather than \(V^{\ext}\), since only the former appears in the equation: offsetting the exterior potential by a constant, even a time-dependent one, yields the same equation. Moreover, since \(f\) is completely determined by \(\nabla_x V^{\ext}\), we shall write \(\rho[\nabla_x V^{\ext}]\) instead of \(\rho[f]\), and write the mapping as \(\nabla_x V^{\ext}\mapsto\rho[\nabla_x V^{\ext}]\). We will study this application in two settings.
\begin{description}
    \item[Space analyticity] Let \(\lambda, s\geqslant 0\). For \(h \in L^2(\mathbb{T}^d)\), set \(\norm{h}_{\lambda, s} \colonequals \norm{e^{\lambda \jap{\nabla}}h}_{H^s(\mathbb{T}^d)}\).
        We denote by \(\mathcal{A}^{\lambda, s}\subset C^{\infty}(\mathbb{T}^d)\) the functional space associated with this norm. As explained before, we consider non-offset exterior potentials, so we will work with the space
        \[
            \nabla L_T^{\infty}\mathcal{A}^{\lambda, s} \colonequals\{\nabla V^{\ext};~V^{\ext}\in L_T^{\infty}\mathcal{A}^{\lambda, s}\}.
        \]

    \item[Time analyticity] We also define the original regularity class in which this problem was first studied,
        \[
            \nabla\mathcal{A}_T\colonequals\Biggl\{\nabla_xV^{\ext};~\begin{aligned}
                 & V^{\ext}\in C^{\infty}([0, T]\times\mathbb{T}^d),                      \\
                 & \forall x\in\mathbb{T}^d,t\mapsto V^{\ext}(t, x)~\text{real-analytic}.
            \end{aligned}\Biggr\}.
        \]
\end{description}
For further details on the notations see the next \cref{sec_notations}.

Succinctly, we will prove the injectivity and surjectivity of the potential-density mapping in the following case:
\[
    \nabla_xV^{\ext}\in \nabla L^{\infty}_{T}\mathcal{A}^{\lambda,s}\longmapsto\rho[\nabla_x V^{\ext}]\in C([0, T'];~\mathcal{A}^{\lambda',s}),
\]
and also, its injectivity in the case:
\[
    \nabla_xV^{\ext} \in \nabla\mathcal{A}_T \longmapsto \rho[\nabla_x V^{\ext}] \in C^{\infty}([0,T]\times\mathbb{T}^d).
\]
The precise statements of these results are given in \cref{sec_main_results}.

We wish to anticipate and already highlight that the Coulomb potential \(V\) given by the Poisson equation does not play a crucial role in yielding the injectivity or surjectivity: it can be replaced by any reasonable self-interaction potential of the form \(V(x) = \int k(x, y)\rho(y)\dd y\).

\subsection{The Time-Dependent Density Functional Theory context}\label{sec_tddft_context}

In quantum mechanics, during the 1960s, Density Functional Theory (DFT) was developed and became a powerful tool to study ground states of systems in physics, chemistry, material science, and biology. Introduced by Hohenberg and Kohn in \cite{PhysRev.136.B864}, it allows reducing a many-body problem to an equivalent one-body problem---in the sense that they generate the same particle density---which is much easier to solve.

Naturally, the question of extending DFT to the time-dependent setting arose, and it was answered by Runge and Gross in the foundational paper of Time-Dependent Density Functional Theory (TDDFT) \cite{PhysRevLett.52.997} in 1984. TDDFT, like DFT, rests on establishing that the density of particles is fully characterized by the exterior potential. Let us briefly describe the principle of TDDFT in its original setting, namely the Schrödinger equation. We refer to \cite{Ruggenthaler_2015, PhysRevA.93.062510} and references therein for a more detailed presentation of the quantum case. Consider an \(N\)-body quantum system described by the Schrödinger equation
\[
    \begin{cases}
        i\partial_t\psi(t, x) = H \psi(t, x), \qquad x\in\mathbb{R}^{3N}, \\
        \psi|_{t=0} = \psi^0,
    \end{cases}
\]
where the Hamiltonian \(H\) is given by
\[
    H \colonequals \sum_{j=1}^N \left(-\frac{1}{2}\Delta_{x_j} + V^{\ext}(t, x_j)\right) + \sum_{1\leqslant j < k \leqslant N} w(x_j - x_k),
\]
where \(w\) is a particle-particle interaction potential, and \(V^{\ext}\) is an exterior potential. To show that the density of particles defined by
\[
    \rho(t, x) = N\int_{\mathbb{R}^{3(N-1)}}\abs{\psi(t, x, x_2, \ldots, x_N)}^2\dd x_2 \cdots \dd x_N,\qquad x\in\mathbb{R}^3,
\] can be generated equivalently by \(N\) one-body equations of the form
\[
    \begin{cases}
        i\partial_t\psi_j(t, x) = \left(-\frac{1}{2}\Delta + V^{\ext, \mathrm{s}}(t, x)\right) \psi_j(t, x), \qquad x\in\mathbb{R}^3, \\
        \psi_j|_{t=0} = \psi_j^0,
    \end{cases}
\]
where \(V^{\ext, \mathrm{s}}\) is a suitable effective potential, with density \(\rho_{\mathrm{s}}(t, x)=\sum_{j=1}^N \abs{\psi_j(t, x)}^2\); we need to establish the invertibility of the potential-density mapping
\[
    V^{\ext} \longmapsto \rho_{\mathrm{s}}[V^{\ext}],\qquad V^{\ext, \mathrm{s}} \longmapsto \rho_{\mathrm{s}}[V^{\ext, \mathrm{s}}].
\]
Indeed, if we show that the density \(\rho\) is in the image of the mapping, we can find an effective potential \(V^{\ext, \mathrm{s}}\)---see the Kohn--Sham procedure \cite{PhysRev.140.A1133} to get an explicit expression---such that the density generated by the one-body equations \(\rho_{\mathrm{s}}\) is exactly the \(N\)-body particle density \(\rho\).

This is particularly relevant for numerical analysis, as solving \(N\) one-body equations is much more tractable than solving a single \(N\)-body equation.

Recently, interest in applying the principle of TDDFT to the classical Vlasov--Poisson system has emerged, spearheaded by the work of Manfredi \cite{Manfredi_2020} in 2020. The goal is to reduce the Vlasov--Poisson equation \eqref{eq_vlasov-poisson} to the following fluid equation
\[
    \begin{cases}
        \partial_t \rho + \nabla_x \cdot J = 0, \\
        \partial_t J + \nabla_x \cdot K[\rho] = \rho\nabla_x V[\rho] + \rho\nabla_x V^{\ext}[\rho],
    \end{cases}
\]
where \(\rho\) is the spatial density, \(J\) the current, and \(K\) the kinetic energy density. The potential \(V[\rho]\) only depends on \(\rho\) by the Poisson equation. For the equation to be well-defined, \(V^{\ext}[\rho]\) and \(K[\rho]\) need to be fully characterized by the density, i.e., the potential-density mapping needs to be invertible. Since this is a new development, the mathematical framework has yet to be established, and the question of the invertibility of the potential-density mapping for the Vlasov--Poisson equation is the focus of this paper.

Similarly to the quantum case and underlined in \cite{Manfredi_2020}, we get computational advantages by reducing the Vlasov--Poisson equation to a fluid equation. It was already known that certain kinetic models and behaviors, such as Landau damping, can be captured by fluid equations, see \cite{BERTRAND196868,PhysRevLett.64.3019}. Nonetheless, the insight from \cite{Manfredi_2020} on TDDFT for the Vlasov--Poisson system, is that it yields a systematic approach---which we shall detail later, cf. \Cref{thm_surjectivity}---to derive an equivalent fluid equation and could account for more complex kinetic behaviors, which motivates this work.

\subsection{Motivations for the analytic framework}

As previously stated, in this work, we study the invertibility of the potential-density mapping for the Vlasov--Poisson system in analytic spaces. We wish to motivate this framework here.

We start by giving a short review of the well-posedness of the Vlasov--Poisson equation. It has been established in several works: local and global existence in physical dimension \(d\leqslant 3\), see \cite{MR794002,MR1126425,MR1115549,MR1165424,MR1132787}. For this paper, the recent work in \cite{MR4266238}---which was motivated by the earlier works \cite{MR3489904,MR2863910,MR1056129}---is particularly relevant, as it establishes local existence results in the analytic spaces we consider but also global existence in the case \(d=3\).

The time analytic framework is the historic setting in which the foundational result of TDDFT for the Schrödinger equation, the Runge--Gross theorem \cite{PhysRevLett.52.997}, was proved. In the quantum case, it was shown to fail in the Coulomb interaction case in \cite{PhysRevA.93.062510}, but it still holds true for the Vlasov--Poisson system, as we will show in \Cref{thm_injectivity_time_analytic}. To get this result, the propagation of moments in velocity is an important technical ingredient This property is established in the works previously cited, as well as in \cite{MR3842911}.

Finally, let us explain the need for the space-velocity analytic framework. It is not merely a technical reason due to the loss of derivative but a fundamental one. It pertains to the surjectivity of the potential-density mapping. We  can formulate this problem as finding a suitable exterior potential \(V^{\ext}\) such that the density of particles generated by the Vlasov--Poisson equation is a target density \(\varrho\). This means, \(\nabla_x V^{\ext}\), which was previously given now becomes an unknown: the Vlasov equation, with unknowns \((f, V^{\ext})\), then reads
\begin{equation}\label{eq_vlasov-surjectivity}
    \begin{cases}
        \partial_t f + v \cdot \nabla_x f + \nabla_x V^{\ext} \cdot \nabla_v f = 0, \\
        \int_{\mathbb{R}^d} f(t,x,v) \dd v = \varrho(t,x),                          \\
        f|_{t=0} = f^0,
    \end{cases}
\end{equation}
where we dropped the Coulomb potential so that the ensuing argument becomes clearer. Now, under the lens of an inverse problem, we show---albeit in a particular case---that this problem is ill-posed in finite regularity, which is precisely the obstruction that requires the analytic framework to overcome and yield the surjectivity of the potential-density mapping.

Indeed, in the case of a constant target density, \(\varrho(t, x) \equiv 1\), we recognize the kinetic incompressible Euler equation named by Brenier in \cite{Brenier1989,MR1748352}. In this very case, this equation has been shown to be ill-posed in weighted Sobolev spaces, see \cite{MR3026566,MR3509003,MR4093619}.

We refer to \Cref{prop_illposedness} for a precise statement on the obstruction to surjectivity in finite regularity.

\subsection{Notations and definitions}\label{sec_notations}

Let us start by introducing some basic notations. We recall the standard multi-index notation, for \(\alpha = (\alpha_1, \ldots, \alpha_d) \in \mathbb{N}^d\), we write
\[
    \abs{\alpha} \colonequals \sum_{j=1}^d \alpha_j, \qquad \partial_x^\alpha \colonequals \partial_{x_1}^{\alpha_1} \cdots \partial_{x_d}^{\alpha_d}.
\]
We denote by \(\mathbb{T}^d = (\mathbb{R}/2\pi\mathbb{Z})^d\) the \(d\)-dimensional torus. For a function \(g = g(x, v)\) with \((x, v) \in \mathbb{T}^d \times \mathbb{R}^d\), its unitary Fourier transform is defined by
\begin{align*}
    \hat{g}_k(\eta) & \colonequals (2\pi)^{-d} \int_{\mathbb{T}^d \times \mathbb{R}^d} e^{-ix \cdot k - iv \cdot \eta} g(x, v) \dd x \dd v, \\
    g(x, v)         & = (2\pi)^{-d} \sum_{k \in \mathbb{Z}^d} \int_{\mathbb{R}^d} e^{ix \cdot k + iv \cdot \eta} \hat{g}_k(\eta) \dd\eta,
\end{align*} where \(k \in \mathbb{Z}^d\) and \(\eta \in \mathbb{R}^d\). Analogous conventions are used for functions of \(x\) or \(v\) alone. With these conventions, the following relations hold
\[
    \begin{cases}
        \int_{\mathbb{T}^d \times \mathbb{R}^d} g(x, v) \overline{h(x, v)} \dd x \dd v = \sum_{k \in \mathbb{Z}^d} \int_{\mathbb{R}^d} \hat{g}_k(\eta) \overline{\hat{h}_k(\eta)} \dd\eta, \qquad \ultrawidehat{g h} = (2\pi)^{-d} \hat{g} * \hat{h}, \\
        (\ultrawidehat{\nabla g})_k(\eta) = (ik, i\eta) \hat{g}_k(\eta), \qquad (\ultrawidehat{v^\alpha g})_k(\eta) = D_\eta^\alpha \hat{g}_k(\eta).
    \end{cases}
\]
We use the Japanese bracket notation for \(x, y \in \mathbb{R}^d\),
\[
    \jap{x} \colonequals \sqrt{1 + \abs{x}^2}, \qquad \inp{x}{y} \colonequals \sqrt{1 + \abs{x}^2 + \abs{y}^2}.
\]
We define the following family of functional analytic spaces which are at the core of our analysis.
\begin{definition}[Analytic Norms]
    Let \(M, s \geqslant 0\) and \(\lambda \geqslant 0\) (the analyticity radius). For \(f \in L^2(\mathbb{T}^d \times \mathbb{R}^d)\), define
    \[
        \begin{cases}\norm{f}_{\lambda, s}^2 &\colonequals \sum_{k \in \mathbb{Z}^d} \int_{\mathbb{R}^d} \abs{\hat{f}_k(\eta)}^2 \inp{k}{\eta}^{2s} e^{2\lambda \inp{k}{\eta}} \dd\eta = \norm{A^{(\lambda, s)} \hat{f}}_{\ell^2_k L^2_\eta}^2, \\ \norm{f}_{M, \lambda, s}^2 &\colonequals \sum_{\abs{\alpha} \leqslant M} \norm{v^\alpha f}_{\lambda, s}^2, \end{cases}
    \]
    where \(A^{(\lambda, s)}_k(\eta) \colonequals \inp{k}{\eta}^s e^{\lambda \inp{k}{\eta}}\) is a Fourier multiplier.

    For \(\rho \in L^2(\mathbb{T}^d)\), set
    \[
        \norm{\rho}_{\lambda, s}^2 \colonequals \sum_{k \in \mathbb{Z}^d} \abs{\hat{\rho}_k}^2 \jap{k}^{2s} e^{2\lambda \jap{k}} = \norm{B^{(\lambda, s)} \hat{\rho}}_{\ell^2_k}^2,
    \]
    where \(B^{(\lambda, s)}_k \colonequals \jap{k}^s e^{\lambda \jap{k}}\) is a Fourier multiplier.

    We denote by \(\mathcal{A}^{\lambda, s}\subset C^{\infty}(\mathbb{T}^d)\) the functional space associated with this norm.

    Finally, we define the adequate function spaces with compatibility conditions for exterior potentials \(V^{\ext}\) and spatial densities \(\rho\) by defining
    \[
        \mathfrak{V}       \colonequals \left\{V^{\ext}=V^{\ext}(t, x)\left|~\begin{aligned}
             & \exists T, \lambda, s>0, V^{\ext} \in L^{\infty}([0, T]; \mathcal{A}^{\lambda, s}),           \\
             & \exists\rho_{\min}>0,\rho[V^{\ext}]\geqslant\rho_{\min}>0~\text{on}~[0, T]\times\mathbb{T}^d.
        \end{aligned}\right.\right\}
    \]
    the set of analytic exterior potentials generating an elliptic density;
    \[
        \nabla\mathfrak{V} \colonequals \left\{\nabla_x V^{\ext};~V^{\ext}\in\mathfrak{V}\right\},
    \]
    the same set without (time-dependent) constants, and,
    \[
        \mathfrak{D}  \colonequals \left\{\left(\rho, f^0\right) =\left(\rho(t,x), f^0(x, v)\right)\left|~\begin{aligned}
             & \textstyle\exists T, \lambda, s>0, \rho \in C^2([0, T]; \mathcal{A}^{\lambda, s}),~\exists \lambda', s'>0, f^0 \in \mathcal{A}^{\lambda', s'}, \\
             & \textstyle\rho(0, x)=\int_{\mathbb{R}^d}f^0(x, v)\dd v,~\partial_t\rho(0, x)=-\nabla_x\int_{\mathbb{R}^d} v f^0(x,v) \dd v,                    \\
             & \textstyle\exists\rho_{\min}>0,\rho(t, x)\geqslant\rho_{\min}>0~\text{on}~[0, T]\times\mathbb{T}^d.
        \end{aligned}\right.\right\}
    \]
    the set of density-initial datum compatible pairs.
\end{definition}

We choose these analytic norms because they are well‑behaved---i.e., they share the same properties as Sobolev norms with respect to inequalities, estimates, and regularity---and well adapted to the Vlasov--Poisson equation as shown in \cite{MR2863910,MR3489904,MR4266238}. Our main interest lies in the analyticity radius \(\lambda\), which allows for a precise quantitative control of the regularity of the solutions. Precisely, by making \(\lambda\) time-dependent (\(t\mapsto\lambda(t)\)), we can gain regularity on our estimates which balances derivative losses appearing in the analysis of the potential-density mapping. This is central to our proofs.

\section{Main results}\label{sec_main_results}

\subsection{Results}

We state two results on the injectivity of the potential-density mapping
\[
    \nabla_x V^{\ext}\longmapsto \left(\rho[f], f|_{t=0}\right)
\]
for the Vlasov--Poisson system.

The first theorem assumes real-analytic regularity in the space-velocity variables for the solutions and the exterior potentials.
\begin{theorem}\label{thm_injectivity_space_analytic}
    Let \(V^{\ext}_j\) \((j=1,2)\) be two exterior potentials and let \(f_j\) be the corresponding solution of the Vlasov–Poisson equation \eqref{eq_vlasov-poisson} with spatial density \(\rho_j\) and fixed initial datum \(f^0\).

    Assume that for some \(T>0\), \(M>d/2+2\), \(\lambda_0>0\) and \(s\geqslant d+2\) there exists \(C>0\) such that for \(j=1,2\)
    \[
        \sup_{t\in[0,T]}\norm{f_j(t)}_{M,\lambda_0,s} \leqslant C, \qquad \sup_{t\in[0,T]}\norm{V^{\ext}_j(t)}_{\lambda_0,s} \leqslant C .
    \]
    If \(\rho_1(t,x)=\rho_2(t,x)\equalscolon\rho(t,x)\) for all \((t,x)\in[0,T]\times\mathbb{T}^d\) and the common density satisfies \(\rho\geqslant\rho_{\min}>0\) on \([0,T]\times\mathbb{T}^d\), then,
    \[
        \nabla_x V^{\ext}_1(t,x)=\nabla_x V^{\ext}_2(t,x)\qquad\text{for all }(t,x)\in[0,T]\times\mathbb{T}^d .
    \]
    Simply put, the map
    \[
        \nabla_xV^{\ext}\in \nabla\mathfrak{V}\longmapsto \left(\rho[\nabla_xV^{\ext}], f|_{t=0}\right)\in \mathfrak{D}
    \]
    is injective.
\end{theorem}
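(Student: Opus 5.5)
Since \(\rho_1=\rho_2\), the Poisson equation gives \(\nabla_x V_1=\nabla_x V_2\equalscolon\nabla_x V\), so the self-consistent fields coincide. Write \(E_j \colonequals \nabla_x V+\nabla_x V^{\ext}_j\), \(g\colonequals f_1-f_2\) and \(E\colonequals E_1-E_2=\nabla_x V^{\ext}_1-\nabla_x V^{\ext}_2\). The idea is to derive a closed system for \((g,E)\) using the moment (fluid) equations, and to close a Gronwall-type estimate in analytic norms with a decreasing radius \(\lambda(t)\). The difference satisfies
\[
\partial_t g + v\cdot\nabla_x g + E_1\cdot\nabla_v g = -E\cdot\nabla_v f_2,\qquad g|_{t=0}=0 .
\]
Define \(J_j=\int v f_j\dd v\) and \(K_j=\int v\otimes v f_j\dd v\). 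The continuity equation \(\partial_t\rho+\nabla_x\cdot J_j=0\) gives \(\nabla_x\cdot(J_1-J_2)=0\), and the momentum equation gives
\[
\partial_t J_j+\nabla_x\cdot K_j=-\rho E_j .
\]
The sign is immaterial here. Taking the divergence of the difference and using \(\nabla_x\cdot(J_1-J_2)=0\), we get the elliptic equation
\[
\nabla_x\cdot(\rho E)=-\nabla_x\cdot\nabla_x\cdot\int v\otimes v\, g\dd v .
\]
Since \(E\) is a gradient and \(\rho\geqslant\rho_{\min}\), this equation determines \(E\): the operator \(\phi\mapsto\nabla\cdot(\rho\nabla\phi)\) is invertible on mean-zero functions.

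First I would establish elliptic estimates in the analytic spaces: for \(\rho\in\mathcal{A}^{\lambda_0,s}\) bounded below, and \(\lambda\leqslant\lambda_0/2\) say,
\[
\norm{E}_{\lambda,s'}\lesssim \norm{\textstyle\int v\otimes v\,g\dd v}_{\lambda,s'} .
\]
I would obtain this via a perturbative/Neumann argument, or via a standard \(H^{s'}\) energy estimate with commutators, using the algebra property of analytic norms (valid since \(s\geqslant d+2>d/2\)). Notice that \(E\) is bounded by a second velocity moment of \(g\) with \emph{no} loss of \(x\)-derivative: divergence of divergence against divergence of a gradient. The second moment of \(g\) is in turn controlled by \(\norm{g}_{M,\lambda,s'}\), since \(M>d/2+2\) allows integrating \(\jap{v}^2\) against \(L^2_v\) weights.

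Next comes the energy estimate for \(g\) in \(\norm{\cdot}_{M,\lambda(t),s'}\) with \(\lambda(t)=\lambda_1-\kappa t\) and \(s'<s\) (for instance \(s'=s-1\)). The time derivative of the weight produces a good term \(-\kappa\norm{\inp{\nabla_x}{\nabla_v}^{1/2}g}^2_{M,\lambda,s'}\). The transport term \(v\cdot\nabla_x\) is antisymmetric; its commutator with \(v^\alpha\) gives lower moments times \(\nabla_x\), which is harmless. The term \(E_1\cdot\nabla_v g\) loses half a derivative after symmetrization. As in the Cauchy--Kovalevskaya style Vlasov--Poisson analysis of \cite{MR4266238}, this is absorbed by the \(\kappa\) term for \(\kappa\) large in terms of \(C\). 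The source \(E\cdot\nabla_v f_2\) is bounded by \(\norm{E}_{\lambda,s'}\norm{f_2}_{M,\lambda_0,s}\), because \(f_2\) has strictly larger radius, so \(\nabla_v f_2\) is controlled with no loss. Combining with the elliptic bound yields
\[
\frac{\dd}{\dd t}\norm{g}^2_{M,\lambda(t),s'}\leqslant C'\norm{g}^2_{M,\lambda(t),s'} .
\]
Since \(g(0)=0\), we get \(g\equiv0\) on \([0,\min(T,\lambda_1/(2\kappa))]\), hence \(E\equiv0\) there.

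Finally I would iterate in time. On the interval just obtained \(f_1=f_2\), and the hypotheses are uniform on \([0,T]\) with the fixed radius \(\lambda_0\), so the step length \(\lambda_1/(2\kappa)\) depends only on \(C,\rho_{\min},\lambda_0\). Restarting from the new time finitely many times covers \([0,T]\).

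The main obstacle I expect is the loss of derivatives in the \(g\)-equation from \(E_1\cdot\nabla_v g\) and from the moment weights \(v^\alpha\) interacting with \(v\cdot\nabla_x\). Its control requires the shrinking-radius trick, with careful commutator estimates for \(A^{(\lambda,s)}\) (using \(e^{\lambda\inp{k}{\eta}}\) subadditivity), and with moments \(M\) matching what the elliptic step demands (the second moment). I would first try to keep the full closure at the same regularity index \(s'\), checking that no derivative is lost in the elliptic step so that Gronwall closes.
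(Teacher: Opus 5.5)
\textbf{There is a genuine gap in your elliptic step.} It is exactly the point the paper singles out as the main difficulty.

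Write \(\phi\colonequals V^{\ext}_1-V^{\ext}_2\), with mean zero. The relation \(\nabla_x\cdot(\rho\nabla_x\phi)=\nabla_x^2:K[g]\) is second order on both sides. It therefore places \(\phi\), not \(E=\nabla_x\phi\), at the level of \(K[g]\). \Cref{prop_elliptic_analytic_regularity} gives \(\norm{\phi}_{\lambda,s'+2}\lesssim\norm{\nabla_x^2:K[g]}_{\lambda,s'}\lesssim\norm{K[g]}_{\lambda,s'+2}\), hence only \(\norm{E}_{\lambda,s'}\lesssim\norm{K[g]}_{\lambda,s'+1}\).

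This cannot be improved in general. For \(\rho\equiv1\) one has \(\hat E_k=ik\,\frac{k\otimes k}{\abs{k}^2}:\hat K_k\), a first-order operator acting on \(K\). Your count ("divergence of divergence against divergence of a gradient") is correct for \(\phi\), but you transferred it to \(E\).

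As a result, the source term \(E\cdot\nabla_v f_2\) is only bounded by a multiple of \(\norm{g}_{M,\lambda,s'}\norm{g}_{M,\lambda,s'+1}\). This is not controlled by \(C'\norm{g}^2_{M,\lambda,s'}\), so \(\partial_t\norm{g}^2_{M,\lambda(t),s'}\leqslant C'\norm{g}^2_{M,\lambda(t),s'}\) does not follow. Nor can this product be absorbed directly by your \(\kappa\)-term, since \(\norm{g}^2_{M,\lambda,s'+1/2}\leqslant\norm{g}_{M,\lambda,s'}\norm{g}_{M,\lambda,s'+1}\) goes the wrong way. A sanity check: if your bound held, every term would close in weighted Sobolev norms with \(\lambda\equiv0\), and the shrinking radius would be superfluous.

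\textbf{The repair is the paper's treatment of the term \(\III\).} Do not estimate the source with \(E\) at index \(s'\). Instead, in the pairing of \(A^{(\lambda,s')}g\) with \(A^{(\lambda,s')}(E\cdot\nabla_v f_2)\), split the weight so that \(g\) carries index \(s'+1/2\) and \(E\) carries \(s'-1/2\). All remaining derivatives go onto \(f_2\), via \eqref{eq_jap_multiplicative}, Young's inequality and the radius gap \(\lambda(t)<\lambda_0\). Then
\(\norm{E}_{\lambda,s'-1/2}\leqslant\norm{\phi}_{\lambda,s'+1/2}\lesssim\norm{K[g]}_{\lambda,s'+1/2}\lesssim\norm{g}_{M,\lambda,s'+1/2}\),
so the source contributes \(C\norm{g}^2_{M,\lambda(t),s'+1/2}\). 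This must be absorbed by the \(-\kappa\) term, so \(\kappa\) has to dominate \(\norm{f_2}_{M,\lambda_0,s}\) times the elliptic constant. Also, \(\lambda_1\) must lie below the threshold \(\lambda_0'\) of \Cref{prop_elliptic_analytic_regularity}, not merely below \(\lambda_0/2\).

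\textbf{A smaller inaccuracy.} The operator \(v\cdot\nabla_x\) commutes with \(v^\alpha\). What it fails to commute with is the Fourier weight \(A^{(\lambda,s')}\): in Fourier, \(k\cdot\nabla_\eta\) falls on \(A_k(\eta)\). This produces \(s'\norm{g}^2_{M,\lambda,s'}+\lambda\norm{g}^2_{M,\lambda,s'+1/2}\), as in \Cref{claim_proofISA_estimate_I}, which is again absorbed by \(\kappa\).

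With these corrections, your scheme (affine decreasing radius, Gr\"onwall from \(g(0)=0\), restarting with a uniform step) coincides with the paper's proof.
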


\begin{remark}
    The analytic regularity hypothesis is satisfied, for instance, when the initial datum \(f^0\) and the exterior potential are analytic, see \cite{MR4266238}.
\end{remark}

The second theorem is a Vlasov--Poisson adaptation of the Runge--Gross theorem, see \cite{PhysRevLett.52.997}, which provided the foundational result for TDDFT. It replaces the spatial and velocity analyticity in \Cref{thm_injectivity_space_analytic} by analyticity in time, while still assuming \(C^{\infty}\) regularity in the spatial and velocity variables.
\begin{theorem}\label{thm_injectivity_time_analytic}
    Let \(V^{\ext}_j\) (\(j=1,2\)) be two exterior potentials and let \(f_j\) be the corresponding solution to the Vlasov--Poisson equation \eqref{eq_vlasov-poisson} with density \(\rho_j\) and fixed initial datum \(f^0\).

    Assume \(V^{\ext}_j\in C^{\infty}([0,T]\times\mathbb{T}^d)\) and \(f_j\in C^{\infty}([0,T]\times\mathbb{T}^d\times\mathbb{R}^d)\). Moreover, assume the uniform moment bounds
    \begin{equation}\label{hyp_thmITA_moments_bound}
        \forall k\in\mathbb{N},\qquad \abs{v}^{k+1}\abs{\nabla_x^k f_j(t,x,v)} \leqslant C_k(v),
    \end{equation}
    where each \(C_k(\cdot)\in L^1(\mathbb{R}^d_v)\) and the bounds hold uniformly in \((t,x)\).

    If \(\rho_1(t,x)=\rho_2(t,x)\eqqcolon\rho(t,x)\) for all \((t,x)\in[0,T)\times\mathbb{T}^d\) and the nodal set \(\{x\in\mathbb{T}^d\mid \rho(0,x)=0\}\) has zero volume, then for every \(k\in\mathbb{N}\) there exists a constant \(c_k\in\mathbb{R}\) such that
    \[
        \partial_t^k\bigl(V^{\ext}_1-V^{\ext}_2\bigr)(0,x)=c_k\qquad\text{for all }x\in\mathbb{T}^d.
    \]
    If, in addition, \(t\mapsto (V^{\ext}_1-V^{\ext}_2)(t,x)\) is real-analytic for each fixed \(x\), then
    \[
        V^{\ext}_1(t,x)-V^{\ext}_2(t,x)=C(t)=\sum_{k\ge0}\frac{c_k}{k!}\,t^k.
    \]
    Simply put, the map
    \[
        \nabla_xV^{\ext}\in\nabla\mathcal{A}_T\longmapsto \left(\rho[\nabla_xV^{\ext}], f|_{t=0}\right)\in C^{\infty}([0, T]\times\mathbb{T}^d)
    \]
    is injective.
\end{theorem}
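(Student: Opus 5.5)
We follow the Runge--Gross strategy: derive a hierarchy of moment identities at \(t=0\) and prove by induction on \(k\) that \(\nabla_x\partial_t^k(V^{\ext}_1-V^{\ext}_2)(0,\cdot)=0\). Write \(W\colonequals V^{\ext}_1-V^{\ext}_2\) and \(g\colonequals f_1-f_2\), and let \(J_j=\int v f_j\dd v\) and \(K_j=\int v\otimes v f_j\dd v\). Integrating the Vlasov equation against \(1\) and \(v\) in velocity gives the continuity and momentum equations:
\[
    \partial_t\rho_j+\nabla_x\cdot J_j=0,\qquad \partial_t J_j+\nabla_x\cdot K_j=\rho_j\nabla_x(V_j+V^{\ext}_j).
\]
The moment hypothesis \eqref{hyp_thmITA_moments_bound} is used here. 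It justifies integration by parts in \(v\) and differentiation under the integral for all time derivatives, since each \(\partial_t\) trades for at most one factor of \(v\) and one \(x\)-derivative via the equation. Because \(\rho_1=\rho_2\) on \([0,T)\), Poisson gives \(V_1=V_2\), and all time derivatives of \(\rho_1-\rho_2\) vanish at \(t=0\). Differentiating the continuity equation in time and substituting the momentum equation yields
\[
    0=\partial_t^2(\rho_1-\rho_2)=-\nabla_x\cdot\partial_t(J_1-J_2)=\nabla_x\cdot\bigl(\nabla_x\cdot(K_1-K_2)\bigr)-\nabla_x\cdot(\rho\nabla_x W).
\]
Applying \(\partial_t^k\) at \(t=0\) gives
\[
    \nabla_x\cdot\bigl(\partial_t^k(\rho\nabla_x W)\bigr)(0)=\nabla_x\cdot\nabla_x\cdot\partial_t^k(K_1-K_2)(0).
\]

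The induction runs as follows. Assume \(\nabla_x\partial_t^\ell W(0)=0\) for all \(\ell<k\). First we show that then \(\partial_t^\ell g(0)=0\) for all \(\ell\le k\). For this we note \(g(0)=0\), and differentiating \(\partial_t g+v\cdot\nabla_x g+\nabla_x(V_1+V_1^{\ext})\cdot\nabla_v g=-\nabla_x W\cdot\nabla_v f_2\) repeatedly in \(t\) at \(t=0\) expresses \(\partial_t^{\ell}g(0)\) through \(\partial_t^{m}g(0)\) with \(m<\ell\) and \(\partial_t^{m}\nabla_x W(0)\) with \(m\le\ell-1\). Here \(V_1=V_2\) identically, so the self-consistent field contributes nothing. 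Consequently \(\partial_t^k(K_1-K_2)(0)=0\). By Leibniz, \(\partial_t^k(\rho\nabla_x W)(0)=\rho(0)\nabla_x\partial_t^k W(0)\), since every other term contains some \(\nabla_x\partial_t^\ell W(0)\) with \(\ell<k\). Writing \(u\colonequals\partial_t^k W(0,\cdot)\), we obtain
\[
    \nabla_x\cdot(\rho_0\nabla_x u)=0\quad\text{on }\mathbb{T}^d,\qquad \rho_0\colonequals\rho(0,\cdot).
\]
Multiplying by \(u\) and integrating over the torus, where there are no boundary terms, gives \(\int\rho_0\abs{\nabla_x u}^2\dd x=0\). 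Hence \(\nabla_x u=0\) on \(\{\rho_0>0\}\), which has full measure because the nodal set has zero volume and \(\rho_0\geqslant0\). By continuity \(\nabla_x u\equiv0\), so \(u=c_k\) is constant. This closes the induction and also covers the base case \(k=0\).

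For the analytic conclusion, fix \(x\). Since \(t\mapsto W(t,x)\) is real-analytic on \([0,T]\) and its Taylor coefficients at \(0\) are \(c_k/k!\) independently of \(x\), the difference \(W(t,x)-C(t)\) vanishes near \(t=0\). By analytic continuation on the connected interval it vanishes on all of \([0,T]\). The one gap is that \(C(t)\) is defined by its series only near \(0\). We handle this by instead noting that \(\nabla_x W(t,x)\) is analytic in \(t\) with all derivatives zero at \(0\), hence identically zero, which gives the injectivity of \(\nabla_x V^{\ext}\).

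The main obstacle is the bookkeeping of the induction: verifying that \(\partial_t^k(K_1-K_2)(0)\) depends only on lower-order derivatives of \(\nabla_x W\), and justifying the exchange of \(\partial_t^k\) with the \(v\)-integrals. The latter needs the moment bounds with one extra power of \(v\) per \(x\)-derivative, exactly as in \eqref{hyp_thmITA_moments_bound}. I would make this precise with a lemma expressing \(\partial_t^\ell f_j(0)\) as a polynomial in \(v\)-derivatives of \(f^0\), \(v\), and \(\partial_t^m\nabla_x(V_j+V_j^{\ext})(0)\) for \(m<\ell\).
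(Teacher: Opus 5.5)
Your induction is correct and is the paper's Runge--Gross argument. It uses the same identity \(\nabla_x\cdot(\rho\nabla_xW)=\nabla_x^2:(K_1-K_2)\), the same test function \(\partial_t^kW(0,\cdot)\), and the same use of the nodal set. Your bookkeeping is leaner than the paper's. The paper carries the hierarchy \(\partial_t^{\ell+1}J=\rho\nabla_x\partial_t^\ell W\pm\nabla_x^{\ell+1}:K^{\ell+1}+R_\ell\) and checks \(\partial_t^jR_\ell|_{t=0}=0\) term by term. You instead observe that the induction hypothesis already forces \(\partial_t^\ell g(0)=0\) for \(\ell\le k\), so only the second moment ever enters.

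The step that fails is your repair of the analytic-continuation part. You use that \(t\mapsto\nabla_xW(t,x)\) is real-analytic, but the hypothesis only gives real-analyticity of \(t\mapsto W(t,x)\) for each fixed \(x\). This does not pass to \(x\)-derivatives: \(\nabla_xW\) is a pointwise limit of difference quotients that are analytic in \(t\), but whose radii of analyticity may shrink to zero. For instance, take \(b\in C_c^\infty((0,T))\) with \(b\not\equiv0\), and \(\chi\) a cutoff equal to \(1\) near \(0\in\mathbb{T}^d\). The function \(W(t,x)=\chi(x)\,x_1\int_{\mathbb{R}}e^{-z^2/4}\,b(t-x_1z)\dd z\) is smooth and, for each fixed \(x\), entire in \(t\): it is a Gaussian smoothing of \(b\) at scale \(\abs{x_1}\) if \(x_1\neq0\), and \(0\) if \(x_1=0\). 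Yet \(\partial_{x_1}W(t,0)=2\sqrt{\pi}\,b(t)\) is not analytic on \([0,T]\).

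You do not need this claim anyway. For any \(x,x_0\in\mathbb{T}^d\), the function \(t\mapsto W(t,x)-W(t,x_0)\) is real-analytic on \([0,T]\), with all derivatives at \(t=0\) equal to \(c_k-c_k=0\). Hence it vanishes on \([0,T]\). Thus \(W(t,x)=C(t)\colonequals W(t,x_0)\), which is analytic on all of \([0,T]\), agrees with \(\sum_kc_kt^k/k!\) wherever that series converges, and gives \(\nabla_xW\equiv0\). The paper's proof stops after the induction and does not spell this step out, so its formula for \(C(t)\) should be read in this sense.
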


\begin{remark}
    The moment-bound hypothesis \eqref{hyp_thmITA_moments_bound} on the solution is satisfied when the initial datum \(f^0\) also satisfies it, i.e., we use the propagation of moments in velocity for the Vlasov--Poisson system---see \cite{MR3842911} and \Cref{appendix_moments}.
\end{remark}

\begin{remark}
    Unlike the Schrödinger case with Coulomb interactions in the whole space---see \cite{PhysRevA.93.062510}, where the Runge--Gross type argument fails due to the singularity in \(0\)---the Runge--Gross Theorem still holds for the Vlasov--Poisson system.
\end{remark}

Now, we turn our attention to the surjectivity of the potential–density mapping. It should be understood as an inverse problem: from a measure of the density of some particles, we want to find the force, derived from an electric potential, to which these particles are subjected. It can also be interpreted as a control problem in the sense that we have a given target density, and we want to find an exterior potential that steers the system toward this target.
\begin{theorem}\label{thm_surjectivity}
    Let \(\varrho\) be a given spatial density such that the compatibility conditions \(\varrho(0, x)=\int_{\mathbb{R}^d}f^0(x, v)\dd v\) and \(\partial_t\varrho(0, x)=-\nabla_x\int_{\mathbb{R}^d} v f^0(x,v) \dd v\) are satisfied. Assume that for some \(T>0\), \(M>d/2+2\), \(\lambda_0>0\) and \(s\geqslant 1+d/2\),
    \begin{gather*}
        \norm{f^0}_{M, \lambda_0, s} < \infty, \qquad\sup_{t\in[0,T]}\norm{\varrho(t)}_{\lambda_0,s} < \infty, \\
        \sup_{t\in[0,T]}\norm{\partial_t^2\varrho(t)}_{\lambda_0,s} < \infty,\qquad\varrho(t,x) \geqslant \varrho_{\min}>0.
    \end{gather*}
    Then, for any \(\gamma \in (0, 1)\), there exist \(0<\lambda_0'=\lambda_0'(d, \varrho, f^0)\leqslant\lambda_0\), an exterior potential \(V^{\ext}\), \(c=c(d, s, \lambda_0, \varrho, \gamma, T)>\lambda_0\) and a \(C>0\) constant, such that for all \(0\leqslant\lambda<\lambda_0'\), and all \(0<t<\min\left\{T, \frac{\lambda_0'-\lambda}{c}\right\}\),
    \begin{align*}
        \norm{V^{\ext}(t)}_{\lambda, s}          & \leqslant C,                                       \\
        \norm{\nabla_x V^{\ext}(t)}_{\lambda, s} & \leqslant C (\lambda_0' - \lambda - ct)^{-\gamma},
    \end{align*}
    and the associated Vlasov--Poisson system \eqref{eq_vlasov-poisson} admits a solution \(f\) with spatial density \(\rho[f]=\varrho\).
    Simply put, the map
    \[
        \nabla_xV^{\ext}\in \nabla\mathfrak{V}\longmapsto \left(\rho[\nabla_xV^{\ext}], f|_{t=0}\right)\in \mathfrak{D}
    \]
    is surjective.
\end{theorem}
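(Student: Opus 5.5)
The plan is to recast the surjectivity problem as a fixed-point problem for the pair \((f,\nabla_xV^{\ext})\). Differentiating the constraint \(\rho[f]=\varrho\) twice in time and using the Vlasov equation gives the continuity equation and the momentum equation:
\begin{equation*}
    \partial_t^2\varrho = \nabla_x\cdot\nabla_x\cdot\int_{\mathbb{R}^d} v\otimes v\, f\dd v - \nabla_x\cdot\bigl(\varrho\,\nabla_x (V+V^{\ext})\bigr).
\end{equation*}
The compatibility conditions on \(\varrho(0)\) and \(\partial_t\varrho(0)\) make this second-order identity equivalent to \(\rho[f]=\varrho\) on \([0,T]\). Since \(V=V[\varrho]\) is given by the Poisson equation with the known density \(\varrho\), the self-consistent field is a known analytic source. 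The unknown potential \(W=V^{\ext}\) therefore solves the elliptic equation
\begin{equation*}
    -\nabla_x\cdot(\varrho\nabla_x W)=\partial_t^2\varrho-\nabla_x\cdot\nabla_x\cdot K[f]+\nabla_x\cdot(\varrho\nabla_x V[\varrho]),\qquad K[f]=\int v\otimes v f\dd v .
\end{equation*}
Because \(\varrho\geqslant\varrho_{\min}>0\) and \(\varrho\in\mathcal{A}^{\lambda_0,s}\), this operator is uniformly elliptic with analytic coefficients. Its right-hand side has zero mean, so it is solvable up to a constant. The first step is an elliptic estimate in the analytic norms,
\begin{equation*}
    \norm{\nabla_x W}_{\lambda,s}\lesssim \norm{\partial_t^2\varrho}_{\lambda,s-1}+\norm{K[f]}_{\lambda,s+1}+\dots,
\end{equation*}
valid for \(\lambda\leqslant\lambda_0'\), with \(\lambda_0'\) chosen small depending on \(\varrho\). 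I would prove it by a Neumann-series or perturbative argument around \(\bar\varrho\Delta\), using the algebra property of \(\mathcal{A}^{\lambda,s}\) for \(s>d/2\). The key point is that \(\nabla_xW\) costs one derivative of \(f\) beyond the moments: it is controlled by \(\norm{f}_{M,\lambda,s+1}\), with \(M\geqslant 2\) moments.

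The scheme is then an iteration. Given \(f^n\), define \(W^n\) by the elliptic problem, and let \(f^{n+1}\) solve the linear Vlasov equation with force \(\nabla_x(V[\varrho]+W^n)\). This is a Cauchy--Kovalevskaya type setting with a loss of one derivative. To absorb the loss, I would use a time-decreasing analyticity radius \(\lambda(t)=\lambda_0'-\lambda-ct\), in the spirit of the analytic well-posedness works cited before. The energy estimate for \(\norm{f(t)}_{M,\lambda(t),s}\) then produces a good term \(-\dot\lambda\,\norm{\inp{k}{\eta}^{1/2}A\hat f}^2=c\norm{\cdot}_{M,\lambda,s+1/2}^2\). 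The transport term \(\nabla_xW\cdot\nabla_v f\) costs one derivative, split as half a derivative on \(f\) and the elliptic gain on \(W\).

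To close with a loss of one full derivative on \(W\), I would instead work with the weighted norm \(\sup_t(\lambda_0'-\lambda-ct)^{\gamma}\norm{\nabla_xW(t)}_{\lambda,s}\). I would use the standard abstract Cauchy--Kovalevskaya (Nirenberg--Nishida) estimate
\begin{equation*}
    \int_0^t \frac{\dd\tau}{(\lambda_0'-\lambda'-c\tau)^{\gamma}(\lambda'-\lambda)}\lesssim (\lambda_0'-\lambda-ct)^{-\gamma},
\end{equation*}
optimized over an intermediate radius \(\lambda'\). This explains the factor \((\lambda_0'-\lambda-ct)^{-\gamma}\) in the statement, with \(\gamma<1\) ensuring integrability. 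The bound \(\norm{V^{\ext}}_{\lambda,s}\leqslant C\) without singular weight would come from the elliptic estimate one derivative lower, together with normalising the mean of \(W\) to zero.

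Finally, I would show contraction of the map \(f^n\mapsto f^{n+1}\) in the same weighted norm on a short interval, taking \(c\) large. The equation is bilinear in \((f,W)\) and the elliptic solver is linear in \(f\), so the differences satisfy the same estimates with small constants. The limit gives \((f,V^{\ext})\), and the compatibility conditions give \(\rho[f]=\varrho\). The main obstacle is the derivative count. \(K[f]\) appears with two \(x\)-derivatives and the elliptic inverse recovers only two, so \(\nabla W\) genuinely loses one derivative relative to \(f\). Each step thus faces a full derivative loss, which is exactly the ill-posed regime of kinetic incompressible Euler. The delicate point is checking that the Cauchy--Kovalevskaya weights close with exponent \(\gamma<1\) and that \(\lambda_0'\) can be chosen depending only on \(d,\varrho,f^0\); I would try first to organise the estimates in the style of the Nirenberg--Nishida abstract theorem.
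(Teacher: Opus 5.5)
Your architecture matches the paper's: you invert \(\nabla_x\cdot(\varrho\nabla_xV^{\ext})=\nabla_x^2:K[f]-\partial_t^2\varrho-\nabla_x\cdot(\varrho\nabla_xV)\) with \(V=V[\varrho]\) frozen, iterate, and close in a Caflisch-type norm with weight \((\lambda_0'-\lambda-ct)^{\gamma}\), taking \(c\) large. The genuine gap is the last step, which you treat as automatic.

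The compatibility conditions do not make the second-order identity equivalent to \(\rho[f]=\varrho\). They only give zero Cauchy data for \(h\colonequals\rho[f]-\varrho\). Subtracting the moment identity for \(\rho[f]\) from the elliptic equation defining \(V^{\ext}\) shows that \(h\) solves
\[
    \partial_t^2h+\nabla_x\cdot\bigl(h\,\nabla_x(V+V^{\ext})\bigr)=0,\qquad h|_{t=0}=\partial_th|_{t=0}=0.
\]
This equation is second order in \(t\) with a first-order operator in \(x\). With frozen coefficients its modes grow like \(e^{c\sqrt{\abs{k}}\,t}\), so it is ill-posed in Sobolev spaces and no energy estimate there gives \(h\equiv0\). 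You need a uniqueness result in the analytic scale, based on \(\norm{\nabla_x\cdot(h\nabla_x(V+V^{\ext}))}_{\lambda,s}\lesssim\norm{h}_{\lambda',s}/(\lambda'-\lambda)\). The paper invokes Nirenberg's abstract Cauchy--Kowalevski theorem (Theorem A of \cite{MR322321}). A monotonicity argument on \(\norm{h}_{\lambda(t),\sigma}^2+\norm{\partial_th}_{\lambda(t),\sigma-1/2}^2\) with decreasing \(\lambda(t)\), as in the injectivity proof, would also work. Without this step you have only solved the auxiliary system. Until \(\rho[f]=\varrho\) is known, \(f\) solves Vlasov with \(V[\varrho]\), not the Vlasov--Poisson system.

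Two intermediate steps also fail as you describe them. First, a Neumann series around \(\bar\varrho\Delta\) converges only if \(\norm{\varrho-\bar\varrho}_{\lambda,s+1}\) is small compared with \(\bar\varrho\), which is not assumed. Shrinking \(\lambda\) does not help, since the perturbation is already \(O(1)\) at \(\lambda=0\). \Cref{prop_elliptic_analytic_regularity} instead uses Sobolev elliptic regularity, which needs only \(\varrho\geqslant\varrho_{\min}\), and then conjugates by \(e^{\lambda\jap{\nabla}}\). The commutator \([e^{\lambda\jap{\nabla}},\varrho]\nabla u\) is \(O(\lambda)\) and is absorbed for \(\lambda<\lambda_0'(\varrho)\); this is where \(\lambda_0'\) acquires its dependence on \(\varrho\).

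Second, the Nirenberg--Nishida integral bound applies to Duhamel formulas whose only loss is in the analyticity radius. But \(v\cdot\nabla_x\) also costs a velocity moment in \(\norm{\cdot}_{M,\lambda,s}\), so it must be treated exactly. The paper does this with the explicit scheme \(\partial_tf^{n+1}+v\cdot\nabla_xf^{n+1}=-\nabla_x(V+V^{\ext,n})\cdot\nabla_vf^{n}\) and the free-transport bound \(\norm{f(\cdot+tv,\cdot)}_{M,\lambda,s}\leqslant(1+\abs{t})^s\norm{f}_{M,\lambda(1+\abs{t}),s}\), which preserves moments and only dilates the radius. It places both \(\nabla_x\) and \(\nabla_v\) under the weight in \(\normop{\cdot}\), because \(\nabla_v\) of free transport produces \(t\nabla_x\).

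In your semi-implicit scheme, \(f^{n+1}\) is transported by \(\nabla_xW^n\), which you control only with the weight \((\lambda_0'-\lambda-ct)^{-\gamma}\). Absorbing that transport term by \(\dot\lambda\) in an energy estimate requires a bound pointwise in time. This linear step therefore needs an argument you have not supplied.
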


\begin{remark}
    It is thanks to this surjectivity that we can find an effective exterior potential as desired in the TDDFT context described in \cref{sec_tddft_context}. Moreover, the proof given is based on a constructive procedure, an iterative scheme, enabling a computational approach to find the effective exterior potential.
\end{remark}

\begin{remark}
    We reiterate here that the Coulomb potential \(V\) given by the Poisson equation
    \[
        -\Delta_x V = \rho - 1,
    \]
    is superficial in the sense that it is uniquely determined by the density, which in either cases, injectivity as well as surjectivity, is fixed in the proofs.
\end{remark}

Finally, let us explain in detail the obstruction to working in finite regularity. In the spirit of Hadamard's concept of well-posedness, and goaded by viewing the surjectivity question as an inverse problem, it is natural to demand that the potential-density mapping be not only invertible but also possess some regularity. It is this latter expectation that fails in finite regularity for the inverse of the potential-density mapping. More precisely, we have the following statement.

\begin{proposition}\label{prop_illposedness}
    The density-potential mapping
    \[
        \left(\rho, f|_{t=0}\right)\in C^2_tL^2_{x}\times H^s_{m} \longmapsto \nabla_xV^{\ext}[\rho, f|_{t=0}]\in L^{\infty}_tL^2_{x}
    \]
    is not \(\alpha\)-Hölder continuous for any \(\alpha\in(0,1]\), any \(s, m\in\mathbb{N}\), where \(H^s_m\) is the weighted Sobolev space defined by
    \[
        \norm{f}_{H^s_m} \colonequals \norm{\jap{v}^m f}_{H^s}.
    \]

    Moreover, we can formulate it as an ill-posed inverse problem: there exists an initial datum \(f^0=f^0(v)\) such that \((1, f^0)\in\mathfrak{D}\) and the following holds. For all \(m,s\in \mathbb{N}\), \(\alpha\in (0,1]\), and \(k\in \mathbb{N}\), there are families of solutions \((f_{\varepsilon})_{\varepsilon>0}\) of
    \[
        \begin{cases}
            \partial_t f_{\varepsilon} + v \cdot \nabla_x f_{\varepsilon} + \nabla_x V^{\ext}_{\varepsilon} \cdot \nabla_v f_{\varepsilon} = 0, \\
            \int_{\mathbb{R}^d} f_{\varepsilon} \dd v = 1,
        \end{cases}
    \]
    and times \(t_\varepsilon = O(\varepsilon\abs{\ln \varepsilon})\) such that
    \[
        \lim_{\varepsilon\to 0}\frac{\norm{\nabla_xV^{\ext}_{\varepsilon} - \nabla_xV^{\ext, 0}}_{L^1([0,t_{\varepsilon}];~L^2(\mathbb{T}^d))}}{\norm{\jap{v}^m (f_{\varepsilon}|_{t=0} - f^0)}_{H^s(\mathbb{T}^d\times \mathbb{R}^d)}^\alpha}= +\infty,
    \]
    where \(V^{\ext, 0}\) is the exterior potential associated with the initial datum \(f^0\).
\end{proposition}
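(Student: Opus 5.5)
We will derive the ill-posedness from the known instability of Brenier's kinetic incompressible Euler equation. With \(\varrho\equiv 1\), the constraint \(\int f\dd v=1\) makes \eqref{eq_vlasov-surjectivity} exactly the kinetic incompressible Euler system. For this system the potential is determined by \(f\) through the elliptic identity obtained by differentiating the continuity and momentum equations twice in time:
\[
    -\Delta_x V^{\ext} = \nabla_x^2 : \int_{\mathbb{R}^d} v\otimes v\, f \dd v .
\]
Here \(\partial_t\rho=0\) forces \(\nabla\cdot J=0\), hence \(\partial_t\nabla\cdot J=0\). This gives the density-potential map explicitly at a fixed density, with \(\rho=1\) held identical along the whole family.

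We pick a homogeneous datum \(f^0=f^0(v)\), smooth, rapidly decaying, with \(\int f^0=1\) and zero mean velocity. Then \((1,f^0)\in\mathfrak{D}\): the compatibility condition reads \(\nabla_x\cdot\int v f^0=0\), which holds. The associated solution is stationary, \(f\equiv f^0\) with \(\nabla_x V^{\ext,0}=0\). We also choose \(f^0\) unstable in the Penrose sense, e.g.\ a symmetric double bump. By the results cited earlier (\cite{MR3026566,MR3509003,MR4093619}), the linearization of kinetic Euler around such \(f^0\) then has unstable eigenmodes \(e^{ik\cdot x}e^{\omega_k t}\hat g_k(v)\) with \(\operatorname{Re}\omega_k\gtrsim\abs{k}\) as \(\abs{k}\to\infty\). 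The linear growth scales with the frequency; this is the Penrose-type dispersion relation for the pressure-constrained Vlasov problem, in which \(V^{\ext}\) plays the role of a Lagrange multiplier with a "kernel" one derivative stronger than Poisson. We take the perturbation \(f_\varepsilon|_{t=0}=f^0+\delta_\varepsilon g_{k_\varepsilon}\), where \(\abs{k_\varepsilon}\sim\varepsilon^{-1}\) and \(\delta_\varepsilon\) is chosen so that \(\norm{\jap{v}^m\delta_\varepsilon g_{k_\varepsilon}}_{H^s}\approx\delta_\varepsilon\abs{k_\varepsilon}^{s}\) is an arbitrarily high power \(\varepsilon^{N}\). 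The perturbation must keep \(\int g\dd v=0\) and be real, so we combine \(\pm k\) modes.

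The key step is nonlinear instability. We construct an approximate solution \(f^{\mathrm{app}}\) by a high-order expansion in \(\delta_\varepsilon e^{\operatorname{Re}\omega t}\), in the spirit of Grenier's method, and prove by energy estimates (weighted \(H^s_m\) with Gronwall, losing at most a fixed number of derivatives per step) that the true solution stays close to \(f^{\mathrm{app}}\) up to the time \(t_\varepsilon\) at which \(\delta_\varepsilon e^{c\abs{k_\varepsilon}t_\varepsilon}\sim\varepsilon^{\beta}\) for a small fixed \(\beta\). This gives \(t_\varepsilon\approx(N-\beta)\varepsilon\abs{\ln\varepsilon}/c=O(\varepsilon\abs{\ln\varepsilon})\). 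On the last portion of this window the leading mode dominates, and via the elliptic formula \(\nabla_xV^{\ext}_\varepsilon\) has \(L^2\) size \(\gtrsim\abs{k_\varepsilon}\varepsilon^{\beta}\), since its Fourier symbol at frequency \(k\) is of order \(\abs{k}\) times the second velocity moment of the mode. Integrating over a time interval of length \(\sim\varepsilon\) then yields \(\norm{\nabla V^{\ext}_\varepsilon}_{L^1_tL^2_x}\gtrsim\varepsilon^{\beta}\). Since the denominator is \(\varepsilon^{N\alpha}\), the ratio tends to infinity once \(N\alpha>\beta\). In particular, no Hölder bound for any \(\alpha\in(0,1]\) can hold, and this proves the first assertion as well, because \(C^2_tL^2_x\) distances between the densities vanish identically.

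The main obstacle is the nonlinear step: the energy estimate for kinetic Euler loses a derivative, since \(\nabla V^{\ext}\) is controlled by two derivatives of moments of \(f\). Gronwall in \(H^s_m\) therefore does not close directly. We would handle this as Grenier does, by working with the high-frequency ansatz where every \(x\)-derivative costs a factor \(\abs{k_\varepsilon}\), comparable to the growth rate. Equivalently, we would measure the error in a norm weighted by \(e^{-C\abs{k_\varepsilon}t}\), treat the solutions on the short window as analytic (using the analytic local theory of \cite{MR4266238}), and take the initial perturbation to be a single analytic mode so that the loss is absorbed by the shrinking analyticity radius over times \(O(\varepsilon\abs{\ln\varepsilon})\).
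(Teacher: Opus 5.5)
Your linear analysis and final bookkeeping are sound. The modes grow like \(e^{c\abs{k}t}\). The potential of a mode is \(\abs{k}\) times its amplitude, so integrating over the last \(O(\varepsilon)\) of the window gives \(\varepsilon^{\beta}\). Then \(N\alpha>\beta\) concludes, and the first assertion follows because the densities coincide. (With the paper's sign convention the identity reads \(\Delta_xV^{\ext}=\nabla_x^2:K\), and \(t_\varepsilon\approx(N+s-\beta)\varepsilon\abs{\ln\varepsilon}/c\); both are harmless.)

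There is a genuine gap in the nonlinear step, which is the whole difficulty, and the mechanism you sketch does not close. Grenier's method needs a loss-free energy estimate for the remainder \(r=f_\varepsilon-f^{\mathrm{app}}\). Here the remainder equation contains the linear term \(\nabla_xV^{\ext}[r]\cdot\nabla_vf^0\), with an order-one coefficient, which costs a full \(x\)-derivative (and two moments) of \(r\). That loss is the ill-posedness itself, so no weighted \(H^s_m\) Grönwall argument exists. The heuristic that each \(x\)-derivative costs \(\abs{k_\varepsilon}\) holds for \(f^{\mathrm{app}}\), which has finitely many harmonics, but not for \(r\), whose harmonics \(jk_\varepsilon\) are unbounded in \(j\); a time weight \(e^{-C\abs{k_\varepsilon}t}\) does not fix this.

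Absorbing the loss by a shrinking analyticity radius forces the radius to decrease at a rate \(C_0\) set by the crude bound on that linear term. You then need an initial radius \(\gtrsim C_0t_\varepsilon\), and the initial mode has analytic norm \(\delta_\varepsilon e^{C_0\abs{k_\varepsilon}t_\varepsilon}=\varepsilon^{\beta-(C_0/c-1)(N+s-\beta)}\). Since \(C_0\geqslant c\), generically strictly, and you need \(N\) large when \(\alpha\) is small, this quantity is large. So you get neither a Cauchy--Kowalevski closure on \([0,t_\varepsilon]\) nor \(r\ll\varepsilon^{\beta}\) in a norm strong enough to control \(\nabla_xV^{\ext}[r]\), which your elliptic read-off requires.

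Closing this needs an ingredient absent from your plan. One option is a sharp semigroup bound for linearized kinetic Euler at the maximal Penrose rate, with \(k_\varepsilon\) maximally unstable, so that analyticity only has to absorb the small nonlinear loss. Note also that \cite[Theorem~1.2]{MR3509003} as stated does not give strong-norm closeness to \(f^{\mathrm{app}}\), so you cannot simply cite it for your route.

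The paper sidesteps all of this. It uses \cite[Theorem~1.2]{MR3509003} as a black box, which gives growth of \(\norm{f_\varepsilon-f^0}_{L^2([0,t_\varepsilon]\times\Omega_\varepsilon)}\) relative to \(\norm{\jap{v}^m(f_\varepsilon|_{t=0}-f^0)}_{H^s}^\alpha\). It then transfers this to the potential by a plain \(L^2\) energy estimate. Since \(f^0=f^0(v)\), the difference \(h=f_\varepsilon-f^0\) solves \(\partial_th+v\cdot\nabla_xh+\nabla_xV^{\ext}_\varepsilon\cdot\nabla_vh=-\nabla_xV^{\ext}_\varepsilon\cdot\nabla_vf^0\). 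The transport part is skew-adjoint, so
\[
    \norm{h(t)}_{L^2_{x,v}}\leqslant\norm{h|_{t=0}}_{L^2_{x,v}}+\norm{\nabla_vf^0}_{L^2_v}\norm{\nabla_xV^{\ext}_\varepsilon}_{L^1_tL^2_x}.
\]
If the potential stayed Hölder-small, \(f_\varepsilon\) would too, contradicting the instability. This transfer is the idea you are missing: it needs no approximate solution, no remainder control and no elliptic relation. Your route, if completed, would give more, namely a pointwise-in-time lower bound \(\norm{\nabla_xV^{\ext}_\varepsilon(t)}_{L^2}\gtrsim\abs{k_\varepsilon}\varepsilon^{\beta}\). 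As written, however, its central step is not established.
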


We first remark that in this ill-posedness result, one can lose as many derivatives and as much weight in velocity as one wants, and the result still holds.

This ill-posedness result is a consequence of the existence of exponential growing modes for the linearized Vlasov equation around a well-chosen \(f^0\), i.e., satisfying the so-called Penrose instability condition: there exists \(n\in\mathbb{Z}^d\setminus\{0\}\) and \(\omega\in\mathbb{C}^d\) with \(\Im(n\cdot\omega) > 0\) such that
\[
    \int_{\mathbb{R}^d}\frac{n\cdot\nabla_vf^0(v)}{n\cdot(v-\omega)}\dd v = \abs{n}^2.
\]
Typical examples of velocity profiles satisfying these conditions are double-bump equilibria.

Therefore, for a general result on the invertibility of the potential-density mapping, the analytic framework is necessary. Otherwise, we would need to impose a Penrose stability condition \cite{10.1063/1.1706024} to work in finite regularity and avoid the obstruction explained above. This condition, imposed on the velocity profile of the initial datum, \(v\mapsto f^0(x, v)\) for every \(x\), can be understood as an ellipticity condition on the Vlasov operator, i.e., it enables us to invert this very operator---we refer to \cite{MR2863910,MR3306612,MR3592362} for more details.

\subsection{Sketch of proof of \texorpdfstring{\Cref{thm_injectivity_space_analytic}}{Theorem \ref{thm_injectivity_space_analytic}}}

Let us outline the strategy of proof for \Cref{thm_injectivity_space_analytic}. The argument rests on a key elliptic relation obtained from the conservation laws associated with the Vlasov–Poisson system \[
    \begin{cases}\nabla_x\cdot(\rho\nabla_x V^{\ext}) = \nabla_x^2:K[f] - \partial_t^2\rho - \nabla_x\cdot(\rho\nabla_x V), \\
        -\Delta V=\rho-1\end{cases}
\]
where \(J[f]\) and \(K[f]\) denote, respectively, the current and the kinetic energy density associated with \(f\).

Taking the difference for two solutions \(f_1,f_2\) having the same spatial density \(\rho\) but, a priori, different exterior potentials \(V^{\ext}_1,V^{\ext}_2\), and setting \(g\colonequals f_1-f_2\), we obtain
\begin{equation}\label{eq_sketchISA_elliptic}
    \nabla_x\cdot\bigl(\rho\nabla_x(V^{\ext}_1-V^{\ext}_2)\bigr)=\nabla_x^2:K[g].
\end{equation}
From \eqref{eq_sketchISA_elliptic} we reformulate the problem as a uniqueness question for \(g\), which satisfies
\[
    \begin{cases}\partial_t g + v\cdot\nabla_x g + \nabla_x(V+V^{\ext}_1)\cdot\nabla_v g + \nabla_x\bigl(V^{\ext}_1-V^{\ext}_2\bigr)\cdot\nabla_v f_2 = 0, \\ g|_{t=0}=0.\end{cases}
\]
We shall control \(g\) by applying Grönwall's lemma in the analytic norm \(\norm{\cdot}_{M,\lambda,s}\). The main difficulty arises from \eqref{eq_sketchISA_elliptic}: it shows that \(\nabla_x(V^{\ext}_1-V^{\ext}_2)\) has one less order of regularity than \(g\), so the elliptic relation entails no gain of derivatives. This loss is compensated by exploiting the analytic regularity, which permits absorbing the derivative loss in the Grönwall argument by an adequate choice of the analyticity radius \(\lambda\)---we will take it to be time-dependent, precisely, \(t\mapsto\lambda(t)\)  will be an affine function of time.

\subsection{Sketch of proof of \texorpdfstring{\Cref{thm_injectivity_time_analytic}}{Theorem \ref{thm_injectivity_time_analytic}}}

We write a proof from the idea found in \cite{Manfredi_2020} which itself is an adaptation of the Runge--Gross strategy---see \cite{PhysRevA.93.062510,PhysRevLett.52.997}. The proof is based on the Taylor expansion in time of the difference \(V^{\ext}_1-V^{\ext}_2\) at \(t=0\). The coefficients of this expansion can be computed recursively using the Vlasov--Poisson equation and the equality of the densities \(\rho_1=\rho_2\)---the moment bounds ensure that all the terms in the expansion are well-defined.

\subsection{Sketch of proof of \texorpdfstring{\Cref{thm_surjectivity}}{Theorem \ref{thm_surjectivity}}}

For this proof, we once again rely on the elliptic relation between the exterior potential and the spatial density,
\[
    \partial_t^2\rho[f]+\nabla_x\cdot(\rho[f]\nabla_xV^{\ext})=\nabla_x^2:K[f]-\nabla_x\cdot(\rho[f]\nabla_x V),
\]
where \(V^{\ext}\) is normally given. Now, if we reverse our point of view and consider a given spatial density \(\varrho\), a good candidate for an exterior potential that generates this density is the solution of the elliptic equation
\[
    \nabla_x\cdot(\varrho\nabla_x V^{\ext})=\nabla_x^2:K[f]-\partial_t^2\varrho-\nabla_x\cdot(\varrho\nabla_x V).
\]
We are led to consider the coupled Vlasov--Poisson system with unknowns \((f, V^{\ext})\),
\begin{equation}\label{eq_sketchS_VP-surjectivity}
    \begin{cases}
        \partial_tf+v\cdot\nabla_xf+\nabla_xV\cdot\nabla_vf+\nabla_xV^{\ext}\cdot\nabla_vf=0,                        \\
        \nabla_x\cdot(\varrho\nabla_xV^{\ext})=\nabla_x^2:K[f]-\partial_t^2\varrho-\nabla_x\cdot(\varrho\nabla_x V), \\
        -\Delta_xV=\varrho-1,                                                                                        \\
        f|_{t=0}=f^0.
    \end{cases}
\end{equation}
Note that \(V\) can be now considered as given while \(V^{\ext}\) is considered as self-induced. As in the injectivity argument, the potential-density mapping entails a loss of derivatives. We compensate for this loss by working in analytic norms. To construct a solution, we adapt a proof of the Cauchy-Kowalevski theorem similar to the ones found in \cite{MR322321}, \cite{MR1027897}.

\section{Preliminary lemmas}

We first state some useful lemmas---from \cite{MR3489904} for instance.
\begin{lemma}
    For all \(k, \ell, \eta\in\mathbb{R}^d\) and all \(s, \delta > 0\), the following inequalities hold,
    \begin{align}
        \inp{k}{\eta}\leqslant\inp{k-\ell}{\eta}+\jap{\ell}, \label{eq_jap_sublinear}    \\
        \inp{k}{\eta}\lesssim\inp{k-\ell}{\eta}\jap{\ell}, \label{eq_jap_multiplicative} \\
        \jap{k}^{s}\lesssim_{s}\delta^{-s}e^{\delta\jap{k}}. \label{eq_jap_exp}
    \end{align}
\end{lemma}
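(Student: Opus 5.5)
We prove the three inequalities separately, each reducing to an elementary inequality on \(\mathbb{R}^d\) (or \(\mathbb{R}^{2d}\)).

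For \eqref{eq_jap_sublinear}, view \(\inp{k}{\eta}\) as the Euclidean norm of the vector \((1,k,\eta)\in\mathbb{R}^{1+2d}\). Decompose
\[
(1,k,\eta)=(1,k-\ell,\eta)+(0,\ell,0).
\]
The triangle inequality then gives \(\inp{k}{\eta}\leqslant\inp{k-\ell}{\eta}+\abs{\ell}\leqslant\inp{k-\ell}{\eta}+\jap{\ell}\).

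For \eqref{eq_jap_multiplicative}, start from \eqref{eq_jap_sublinear} and note that \(\inp{k-\ell}{\eta}\geqslant1\) and \(\jap{\ell}\geqslant1\). For any \(a,b\geqslant1\) we have \(a+b\leqslant 2ab\), so
\[
\inp{k}{\eta}\leqslant 2\inp{k-\ell}{\eta}\jap{\ell}.
\]
The implicit constant is therefore \(2\), independent of \(d\).

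For \eqref{eq_jap_exp}, set \(x=\delta\jap{k}>0\). The claim is equivalent to \(x^s\lesssim_s e^{x}\) for \(x>0\), since \(\jap{k}^s=\delta^{-s}x^s\). The function \(x\mapsto x^se^{-x}\) attains its maximum on \((0,\infty)\) at \(x=s\), with maximal value \((s/e)^s\). Hence
\[
\jap{k}^s\leqslant (s/e)^s\,\delta^{-s}e^{\delta\jap{k}},
\]
with a constant depending only on \(s\). This is the bound used later to trade polynomial weights for a small loss of analyticity radius.

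None of the steps is a genuine obstacle. The only point requiring care is in \eqref{eq_jap_multiplicative}, where the lower bound \(1\) on both brackets is what allows a sum to be bounded by a product.
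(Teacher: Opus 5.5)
Your proof is correct. The triangle inequality for \((1,k,\eta)\in\mathbb{R}^{1+2d}\), the bound \(a+b\leqslant 2ab\) for \(a,b\geqslant 1\), and maximizing \(x\mapsto x^se^{-x}\) at \(x=s\) give all three inequalities, with explicit constants \(2\) and \((s/e)^s\) independent of \(\delta\); the paper gives no proof of its own and simply quotes these as standard facts from \cite{MR3489904}, so your argument supplies the routine verification it omits.
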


\begin{lemma}
    Let \(g_k(\eta), h_k(\eta)\in\ell^2_k L^2_{\eta}\) and \(\jap{k}^{\sigma} V_k \in \ell^2_k\) for \(\sigma > d/2\). Then,
    \begin{equation}\label{eq_young1}
        \abs*{\sum_{k, \ell} \int_{\eta} g_k(\eta) V_{\ell} h_{k-\ell}(\eta) \dd\eta}
        \lesssim_{d, \sigma} \norm{g}_{\ell^2_k L^2_{\eta}} \norm{h}_{\ell^2_k L^2_{\eta}} \norm{\jap{\cdot}^{\sigma} V}_{\ell^2_k}.
    \end{equation}
    Let \(g_k(\eta), \jap{k}^{\sigma} h_k(\eta)\in\ell^2_k L^2_{\eta}\) and \(V_k \in \ell^2_k\) for \(\sigma > d/2\). Then,
    \begin{equation}\label{eq_young2}
        \abs*{\sum_{k, \ell} \int_{\eta} g_k(\eta) V_{\ell} h_{k-\ell}(\eta) \dd\eta}
        \lesssim_{d, \sigma} \norm{g}_{\ell^2_k L^2_{\eta}} \norm{\jap{k}^{\sigma} h}_{\ell^2_k L^2_{\eta}} \norm{V}_{\ell^2_k}.
    \end{equation}
\end{lemma}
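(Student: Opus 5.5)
Both inequalities are discrete Young-type convolution bounds. The plan is to fix the index carrying the $\jap{\cdot}^\sigma$ weight, estimate the remaining sum by Cauchy--Schwarz, and then sum over the weighted index. That last sum converges because $\sum_{\ell\in\mathbb{Z}^d}\jap{\ell}^{-2\sigma}<\infty$ exactly when $2\sigma>d$, which is the only role of the hypothesis $\sigma>d/2$. In neither case is any decay of $g$ or $h$ in $\eta$ used beyond square integrability.

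\emph{Proof of \eqref{eq_young1}.} Fix $\ell\in\mathbb{Z}^d$. Cauchy--Schwarz in the joint variable $(k,\eta)$ gives
\[
\abs*{\sum_k\int g_k(\eta)h_{k-\ell}(\eta)\dd\eta}\leqslant \norm{g}_{\ell^2_kL^2_\eta}\Bigl(\sum_k\norm{h_{k-\ell}}_{L^2_\eta}^2\Bigr)^{1/2}=\norm{g}_{\ell^2_kL^2_\eta}\norm{h}_{\ell^2_kL^2_\eta}.
\]
The equality holds because the shift $k\mapsto k-\ell$ is a bijection of $\mathbb{Z}^d$. Multiplying by $\abs{V_\ell}$ and summing over $\ell$, it remains to bound $\sum_\ell\abs{V_\ell}$. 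Writing $\abs{V_\ell}=\jap{\ell}^{-\sigma}\cdot\jap{\ell}^{\sigma}\abs{V_\ell}$ and applying Cauchy--Schwarz in $\ell$ yields
\[
\sum_\ell\abs{V_\ell}\leqslant\Bigl(\sum_\ell\jap{\ell}^{-2\sigma}\Bigr)^{1/2}\norm{\jap{\cdot}^\sigma V}_{\ell^2_k}.
\]
The first factor is a finite constant depending only on $d$ and $\sigma$, which gives \eqref{eq_young1}.

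\emph{Proof of \eqref{eq_young2}.} Here the weight sits on $h$, so the roles are swapped. Set $m=k-\ell$ and reindex the sum as $\sum_m\sum_k\int g_k(\eta)V_{k-m}h_m(\eta)\dd\eta$. For fixed $m$, first use Cauchy--Schwarz in $\eta$ to bound the $k$-th term by $\abs{V_{k-m}}\norm{g_k}_{L^2_\eta}\norm{h_m}_{L^2_\eta}$. Then use Cauchy--Schwarz in $k$, with shift invariance of the $\ell^2$ norm of $V$:
\[
\abs*{\sum_k\int g_kV_{k-m}h_m\dd\eta}\leqslant\norm{V}_{\ell^2_k}\norm{g}_{\ell^2_kL^2_\eta}\norm{h_m}_{L^2_\eta}.
\]
Summing over $m$, the same weighted Cauchy--Schwarz as above gives
\[
\sum_m\norm{h_m}_{L^2_\eta}\leqslant\Bigl(\sum_m\jap{m}^{-2\sigma}\Bigr)^{1/2}\norm{\jap{k}^\sigma h}_{\ell^2_kL^2_\eta},
\]
which proves \eqref{eq_young2}.

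There is no real obstacle in this argument. The only point requiring care is the order of operations. The weighted index ($\ell$ in the first bound, $m=k-\ell$ in the second) must be summed last, after Cauchy--Schwarz has been applied in the other variables. This way the unweighted factors appear only through their full $\ell^2$ norms, and the weight is spent solely on making the outer $\ell^1$ sum converge. All manipulations are justified a priori: the absolute bounds just derived show the double sum converges absolutely, which legitimises the reindexing and Fubini.
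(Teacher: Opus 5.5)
Your proof is correct. It is the standard argument for this lemma: the discrete Young inequality $\ell^1_k * \ell^2_k \to \ell^2_k$, with the common variable $\eta$ handled by Cauchy--Schwarz, combined with $\sum_{\ell}\abs{V_\ell} \lesssim_{d,\sigma} \norm{\jap{\cdot}^{\sigma}V}_{\ell^2}$ for $\sigma>d/2$; the paper gives no proof of its own and only cites \cite{MR3489904}, so there is no different route to compare against.
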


\begin{lemma}\label{lem_rho_leq_f}
    Let \(f=f(x, v)\) be a function and \(\rho(x)\colonequals\int_{\mathbb{R}^d}f(x, v)\dd v\) its associated density. Then,
    \[
        \norm{\rho}_{\lambda, s} \lesssim \norm{f}_{M, \lambda, s},
    \]
    for every \(M>d/2\).
\end{lemma}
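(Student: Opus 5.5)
The statement to prove is \Cref{lem_rho_leq_f}: \(\norm{\rho}_{\lambda, s}\lesssim\norm{f}_{M,\lambda,s}\) for \(M>d/2\). We work entirely on the Fourier side. By the conventions of \cref{sec_notations}, the Fourier coefficients of \(\rho=\int f\dd v\) are the \(\eta=0\) trace of \(\hat f\): \(\hat\rho_k=\hat f_k(0)\), up to a harmless power of \(2\pi\). The weights also agree at \(\eta=0\), since \(\inp{k}{0}=\jap{k}\) and hence \(B^{(\lambda,s)}_k=A^{(\lambda,s)}_k(0)\). Therefore
\[
    \norm{\rho}_{\lambda,s}^2\simeq\sum_{k}\abs*{A^{(\lambda,s)}_k(0)\hat f_k(0)}^2 ,
\]
and the task reduces to controlling, for each fixed \(k\), the pointwise value at \(\eta=0\) of a function of \(\eta\).

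The first step is a Sobolev embedding in \(\eta\). For fixed \(k\), set \(F_k(\eta)\colonequals\hat f_k(\eta)\). Since \(M>d/2\), we have \(H^M(\mathbb{R}^d_\eta)\hookrightarrow L^\infty\), so
\[
    \abs{F_k(0)}\lesssim\sum_{\abs{\alpha}\leqslant M}\norm{D^\alpha_\eta F_k}_{L^2_\eta}.
\]
By the stated relation \(\ultrawidehat{v^\alpha g}=D^\alpha_\eta\hat g\), the function \(D^\alpha_\eta F_k\) is exactly the \(k\)-th coefficient of \(\ultrawidehat{v^\alpha f}\).

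Next we insert the weight. The weight \(A^{(\lambda,s)}_k(\eta)\) is nondecreasing in \(\abs\eta\), so it is at least \(A^{(\lambda,s)}_k(0)\) everywhere. Hence
\[
    A_k(0)\norm{D^\alpha_\eta F_k}_{L^2_\eta}\leqslant\norm{A_k(\cdot)\,\ultrawidehat{(v^\alpha f)}_k}_{L^2_\eta}.
\]
Squaring, summing over \(k\), and using Cauchy--Schwarz on the finite sum over \(\alpha\) gives
\[
    \norm{\rho}_{\lambda,s}^2\lesssim\sum_{\abs\alpha\leqslant M}\norm{v^\alpha f}_{\lambda,s}^2=\norm{f}_{M,\lambda,s}^2 .
\]

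There is a subtlety in applying the weight before the embedding. The Sobolev embedding is applied to \(F_k\) without its weight, and the weight is then bounded by its value at \(\eta=0\). This avoids having to differentiate the exponential weight in \(\eta\), which would otherwise generate extra factors of \(\lambda\). That ordering is the only point needing care. One should also check that \(M\) can be taken as an integer, or handle a fractional \(M\) through the inclusion \(\sum_{\abs\alpha\leqslant M}\) with \(\lfloor M\rfloor>d/2\). Otherwise the argument is routine.
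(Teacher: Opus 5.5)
Your proof is correct and takes the same approach as the paper: trace \(\hat f_k\) at \(\eta=0\), apply the Sobolev embedding \(H^M_\eta\hookrightarrow L^\infty_\eta\) to the unweighted \(\hat f_k\), then reinsert the weight using \(\jap{k}=\inp{k}{0}\leqslant\inp{k}{\eta}\). Your remarks on the \(2\pi\) normalization and on needing \(\lfloor M\rfloor>d/2\) for non-integer \(M\) are valid points that the paper's proof leaves implicit.
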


\begin{proof}
    Since \(\hat{\rho}_k = \hat{f}_k|_{\eta=0}\), we have
    \begin{align*}
        \norm{\rho}_{\lambda, s} & =\sum_{k\in\mathbb{Z}^d} \abs*{\hat{f}_k|_{\eta=0}}^2 \jap{k}^{2s} e^{2\lambda \jap{k}}                                                                                    \\
                                 & \lesssim \sum_{\abs{\alpha}\leqslant M}\sum_{k\in\mathbb{Z}^d} \jap{k}^{2s} e^{2\lambda\jap{k}}\int_{\mathbb{R}^d}\abs{D^\alpha_\eta \hat{f}_k(\eta)}^2 \dd\eta            \\
                                 & \lesssim \sum_{\abs{\alpha}\leqslant M}\sum_{k\in\mathbb{Z}^d}\int_{\mathbb{R}^d}\abs{D^\alpha_\eta\hat{f}_k(\eta)}^2\inp{k}{\eta}^{2s} e^{2\lambda \inp{k}{\eta}} \dd\eta \\
                                 & \lesssim \norm{f}_{M, \lambda, s},
    \end{align*}
    where we used the Sobolev embedding \(H^{d/2+}_{\eta}\hookrightarrow L^{\infty}_{\eta}\) in the first inequality.
\end{proof}

Finally, we prove an a priori analytic regularity estimate of solutions to an essential elliptic equation.
\begin{proposition}\label{prop_elliptic_analytic_regularity}
    Let \(d\geqslant 1\), \(\lambda_0>0\), \(s\geqslant 0\), and \(\rho\in\mathcal{A}^{\lambda_0, s}(\mathbb{T}^d)\) such that \(\rho(x)\geqslant\rho_{\min}>0\) for all \(x\in\mathbb{T}^d\). There exists \(0<\lambda_0'=\lambda_0'(d, \rho)<\lambda_0\) such that for all \(0<\lambda<\lambda_0'\), if \(u\in\mathcal{A}^{\lambda, s+2}(\mathbb{T}^d)\) with zero mean is a solution to the elliptic equation
    \[
        \nabla\cdot(\rho\nabla u)=f,
    \]
    where \(f\in\mathcal{A}^{\lambda, s}(\mathbb{T}^d)\). Then, \[
        \norm{u}_{\lambda, s+2}\lesssim_{d, \rho, \lambda_0'}\norm{f}_{\lambda, s}.
    \]
\end{proposition}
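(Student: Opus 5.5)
We prove the estimate by perturbing around the constant-coefficient Laplacian and absorbing the error using smallness of \(\lambda\) together with ellipticity. Two ingredients are needed: the analytic Sobolev estimate on the torus, \(\norm{u}_{\lambda, s+2}\lesssim\norm{\Delta u}_{\lambda,s}\) for zero-mean \(u\), which is immediate from the Fourier multiplier \(\jap{k}^2\lesssim\abs{k}^2\) for \(k\neq 0\), and a product estimate in \(\mathcal{A}^{\lambda,s}\).

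First reduce to \(\lambda=0\) plus a commutator. Write \(\rho=\bar\rho+\tilde\rho\), where \(\bar\rho\) is the mean of \(\rho\). Since \(\rho\) is analytic and positive, \(\tilde\rho\) can be expanded in Fourier modes. Set \(w\colonequals e^{\lambda\jap{\nabla}}u\). Applying \(e^{\lambda\jap{\nabla}}\jap{\nabla}^s\) to the equation gives
\[
\nabla\cdot(\rho\nabla w)=e^{\lambda\jap{\nabla}}f+\bigl[\nabla\cdot(\rho\nabla\,\cdot),e^{\lambda\jap{\nabla}}\bigr]u,
\]
where the commutator is \(\sum_{k,\ell}\hat\rho_\ell\,(k\cdot(k-\ell))\,\hat u_{k-\ell}\bigl(e^{\lambda\jap{k}}-e^{\lambda\jap{k-\ell}}\bigr)\), taken with the appropriate sign.

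The multiplier difference is handled with \eqref{eq_jap_sublinear}:
\[
\abs{e^{\lambda\jap{k}}-e^{\lambda\jap{k-\ell}}}\leqslant\lambda\jap{\ell}e^{\lambda\jap{\ell}}e^{\lambda\jap{k-\ell}}.
\]
Combining this with \eqref{eq_jap_multiplicative} and \eqref{eq_young1} (taking \(\sigma>d/2\), which consumes some analyticity of \(\rho\) through \eqref{eq_jap_exp} and the condition \(\lambda<\lambda_0\)), the commutator is bounded in \(H^{s}\) by
\[
C\lambda\,\norm{\rho}_{\lambda_0,0}\,\norm{u}_{\lambda,s+2}\,(\lambda_0-\lambda)^{-\sigma-1}.
\]

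Next comes the standard (\(\lambda=0\)) elliptic regularity in \(H^{s+2}\) for variable coefficients, \(\norm{w}_{H^{s+2}}\lesssim_{\rho}\norm{\nabla\cdot(\rho\nabla w)}_{H^s}\) for zero-mean \(w\). Note that \(w\) has zero mean because \(u\) does. This is proved by the energy estimate \(\rho_{\min}\norm{\nabla w}_{L^2}^2\leqslant\abs{\langle \nabla\cdot(\rho\nabla w),w\rangle}\) together with Poincaré, and higher regularity follows by differentiating and using smoothness of \(\rho\). We avoid negative-order issues because \(s\geqslant0\) and \(\rho\) is analytic, hence \(C^\infty\).

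Putting these together,
\[
\norm{u}_{\lambda,s+2}\leqslant C_\rho\norm{f}_{\lambda,s}+C_\rho C\lambda(\lambda_0-\lambda)^{-\sigma-1}\norm{u}_{\lambda,s+2}.
\]
Choosing \(\lambda_0'\) small enough that \(C_\rho C\lambda_0'(\lambda_0-\lambda_0')^{-\sigma-1}\leqslant 1/2\) lets us absorb the last term. The absorption is legitimate since \(u\in\mathcal{A}^{\lambda,s+2}\) by hypothesis, so \(\norm{u}_{\lambda,s+2}\) is finite.

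The main obstacle is the commutator estimate. It must cost exactly two derivatives on \(u\), with a small factor \(\lambda\), and no more. The factor \(k\cdot(k-\ell)\) has to be split as \(\jap{k-\ell}^2+\jap{k-\ell}\jap{\ell}\), with the extra \(\jap{\ell}\) powers placed on \(\rho\). This is why analyticity of \(\rho\) at radius \(\lambda_0>\lambda_0'\), rather than merely Sobolev regularity, is needed, and it is where the dependence of \(\lambda_0'\) on \(\rho\) enters.
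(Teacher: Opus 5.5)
Your proposal follows the paper's proof: conjugate by $e^{\lambda\jap{\nabla}}$, apply Sobolev elliptic regularity to $w=e^{\lambda\jap{\nabla}}u$, bound the commutator by $\lambda$ times a negative power of $\lambda_0-\lambda$ (mean value theorem, Young's inequality, \eqref{eq_jap_exp}), and absorb for small $\lambda$; you also correctly note that $w$ keeps zero mean, which the paper leaves implicit. The one bookkeeping slip is that measuring $\rho$ in $\norm{\rho}_{\lambda_0,0}$ puts the weight $\jap{\ell}^{s+2+\sigma}$ on $\hat\rho_\ell$, so the factor should be $(\lambda_0-\lambda)^{-(s+\sigma+2)}$ rather than $(\lambda_0-\lambda)^{-\sigma-1}$ (or $(\lambda_0-\lambda)^{-(\sigma+2)}$ with $\norm{\rho}_{\lambda_0,s}$, as in the paper), which is harmless because any fixed power still lets you absorb once $\lambda$ is small.
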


\begin{proof}
    From Sobolev elliptic regularity---see \cite{MR4703940}, we have \(\norm{u}_{H^{s+2}}\lesssim_{d, \rho}\norm{f}_{H^s}\). We deduce the analytic regularity estimate,
    \begin{align*}
        \norm{u}_{\lambda, s+2}=\norm{e^{\lambda\jap{\nabla}}u}_{H^{s+2}} & \lesssim_{d, s, \rho} \norm*{\nabla\cdot\left(\rho\nabla \left(e^{\lambda\jap{\nabla}}u\right)\right)}_{H^{s}}                                            \\
                                                                          & \lesssim_{d, s, \rho} \norm*{e^{\lambda\jap{\nabla}}f}_{H^{s}}+\norm*{\nabla\cdot\left(\left[e^{\lambda\jap{\nabla}}, \rho\right]\nabla u\right)}_{H^{s}} \\
                                                                          & \lesssim_{d, s, \rho} \norm{f}_{\lambda, s}+\norm*{\left[e^{\lambda\jap{\nabla}}, \rho\right]\nabla u}_{H^{s+1}} .
    \end{align*}
    It remains to estimate the commutator. We have
    \begin{align*}
        \abs*{\ultrawidehat{[e^{\lambda\jap{\nabla}}, \rho]\nabla u}_k} & \leqslant \sum_{\ell\in\mathbb{Z}^d}\abs*{e^{\lambda\jap{k}} - e^{\lambda\jap{\ell}}}\abs*{\hat{\rho}_{k-\ell}}\abs*{\ell\hat{u}_{\ell}}                                                                                  \\
                                                                        & \leqslant \sum_{\ell\in\mathbb{Z}^d}\abs*{e^{\lambda(\jap{k}-\jap{k-\ell}-\jap{\ell})} - e^{-\lambda\jap{k-\ell}}}\abs*{e^{\lambda\jap{k-\ell}}\hat{\rho}_{k-\ell}}\abs*{\jap{\ell}^2e^{\lambda\jap{\ell}}\hat{u}_{\ell}} \\
                                                                        & \leqslant \sum_{\ell\in\mathbb{Z}^d}\lambda\abs*{\jap{k}-\jap{\ell}}\abs*{e^{\lambda\jap{k-\ell}}\hat{\rho}_{k-\ell}}\abs*{\jap{\ell}e^{\lambda\jap{\ell}}\hat{u}_{\ell}}                                                 \\
                                                                        & \leqslant \lambda\sum_{\ell\in\mathbb{Z}^d}\abs*{\jap{k-\ell}e^{\lambda\jap{k-\ell}}\hat{\rho}_{k-\ell}}\abs*{\jap{\ell}e^{\lambda\jap{\ell}}\hat{u}_{\ell}}.
    \end{align*}
    Now, taking the \(H^{s+1}\)-norm yields, using Young's inequality, for \(\sigma>d/2\) and by \eqref{eq_jap_exp},
    \begin{align*}
        \norm*{\nabla\cdot\left(\left[e^{\lambda\jap{\nabla}}, \rho\right]\nabla u\right)}_{H^{s}} & \leqslant \lambda\norm*{\sum_{\ell\in\mathbb{Z}^d}\abs*{\jap{k-\ell}^{s+2}e^{\lambda\jap{k-\ell}}\hat{\rho}_{k-\ell}}\abs*{\jap{\ell}^{s+2}e^{\lambda\jap{\ell}}\hat{u}_{\ell}}}_{\ell_k^2} \\
                                                                                                   & \leqslant \lambda\norm*{\jap{k}^{s+2}e^{\lambda\jap{k}}\hat{\rho}_{k}}_{\ell_k^1}\norm*{u}_{\lambda, s+2}                                                                                   \\
                                                                                                   & \lesssim_{\sigma} \lambda\norm*{\rho}_{\lambda, s+\sigma+2}\norm*{u}_{\lambda, s+2}                                                                                                         \\
                                                                                                   & \lesssim_{\sigma} \frac{\lambda}{(\lambda_0-\lambda)^{\sigma+2}}\norm*{\rho}_{\lambda_0, s}\norm*{u}_{\lambda, s+2}
    \end{align*}
    Hence, for \(\lambda>0\) small enough, we absorb this last term and obtain the desired estimate.
\end{proof}

\section{Proof of \texorpdfstring{\Cref{thm_injectivity_space_analytic}}{Theorem \ref{thm_injectivity_space_analytic}}}\label{sec_proofISA}

The key of the proof lies in the elliptic equation \eqref{eq_sketchISA_elliptic} which relates the exterior potential \(V^{\ext}\) to the density \(\rho\). Indeed, this equation allows us to reformulate the injectivity of the potential-density mapping as a uniqueness problem for a Vlasov--Poisson type equation.

We set \(g\colonequals f_1-f_2\). By taking the difference in equation \eqref{eq_vlasov-poisson}, we obtain
\begin{equation}\label{eq_proofISA_VP-diff}
    \begin{cases}
        \begin{aligned}
            \partial_tg+v\cdot\nabla_xg & +\nabla_xV\cdot \nabla_vg                                                                   \\
                                        & +\nabla_xV^{\ext}\cdot\nabla_vg+\nabla_x\left(V^{\ext}_1-V^{\ext}_2\right)\cdot\nabla_vf=0,
        \end{aligned} \\
        g|_{t=0}=0,                                                                                                               \\
        -\Delta_xV(t, x)=\rho(t, x)-1,
    \end{cases}
\end{equation} where we denote \(f\colonequals f_1\) and \(V^{\ext}\colonequals V^{\ext}_2\) since there is no distinction in the role played by solutions \(1\) and \(2\).

We first derive the conservation equation by integrating the equation \eqref{eq_proofISA_VP-diff} in \(v\)---notice the \(3\) terms involving potentials are in divergence form---we get, setting
\(
J(t, x)\colonequals\int_{\mathbb{R}^d}vg(t, x, v)\dd v
\)
the current of \(g\),
\[
    \partial_t(\rho_1-\rho_2)+\nabla_x\cdot J=0,\qquad\text{i.e.,}~\nabla_x\cdot J=0,
\]
by virtue of the hypothesis \(\rho_1=\rho_2\). Furthermore, by taking the time derivative of the current, we obtain
\begin{align*}
    \partial_tJ & =\begin{aligned}[t]
                        & -\int_{\mathbb{R}^d}v(v\cdot\nabla_xg)\dd v-\int_{\mathbb{R}^d}v\nabla_xV\cdot\nabla_vg\dd v                                               \\
                        & -\int_{\mathbb{R}^d}v\nabla_x\left(V^{\ext}_1-V^{\ext}_2\right)\cdot\nabla_vf\dd v-\int_{\mathbb{R}^d}v\nabla_xV^{\ext}\cdot\nabla_vg\dd v
                   \end{aligned} \\
                & =-\nabla_x\cdot K(t, x)+\rho(t, x)\nabla_x\left(V^{\ext}_1-V^{\ext}_2\right)(t, x),
\end{align*}
with
\(
K(t, x)\colonequals\int_{\mathbb{R}^d}v\otimes vg(t, x, v)\dd v
\)
the kinetic energy density of \(g\). The last equality was obtained by integration by parts and using the fact that the spatial density of \(g\) is zero. Recall that \(\nabla_x\cdot J(t, x)=0\) according to the conservation equation. We deduce that the difference of the exterior potentials satisfies the elliptic equation
\begin{equation}\label{eq_proofISA_elliptic-diff}
    \nabla_x\cdot(\rho(t, x)\nabla_x\left(V^{\ext}_1-V^{\ext}_2\right)(t, x))=\nabla_x^2:K(t, x).
\end{equation}
Thus, the injectivity of the potential-density mapping is equivalent to the uniqueness of the solution to the following system
\begin{equation}\label{eq_proofISA_uniqueness}
    \begin{cases}
        \partial_tg+v\cdot\nabla_xg+\left(\nabla_xV+\nabla_xV^{\ext}\right)\cdot \nabla_vg+\nabla_x\left(V^{\ext}_1-V^{\ext}_2\right)\cdot\nabla_vf=0, \\
        g|_{t=0}=0,                                                                                                                                    \\
        \nabla_x\cdot(\rho(t, x)\nabla_x\left(V^{\ext}_1-V^{\ext}_2\right)(t, x))=\nabla_x^2:K(t, x),                                                  \\
        -\Delta_xV(t, x)=\rho(t, x)-1.
    \end{cases}
\end{equation}

To prove this uniqueness, we wish to apply Grönwall's lemma, so we need to estimate the quantity \(\partial_t\norm{g(t)}_{M, \lambda(t), s}^2\) where we consider parameters \(M, \lambda(t), s\geqslant 0\) arbitrary. We begin by rewriting equation \eqref{eq_proofISA_uniqueness} in Fourier,
\begin{align*}
    \partial_t & \hat{g}_k(t, \eta) -k\cdot\nabla_{\eta}\hat{g}_k(t, \eta)-(2\pi)^{-d}(k\hat{V}_k(t))\ast_k(\eta\hat{g}_k(t, \eta))                                                                \\
               & -(2\pi)^{-d}k(\hat{V}^{\ext}_k(t))\ast_k(\eta\hat{g}_k(t, \eta))-(2\pi)^{-d}\left[k\ultrawidehat{\left(V^{\ext}_1-V^{\ext}_2\right)}_k(t)\right]\ast_k(\eta\hat{f}_k(t, \eta))=0.
\end{align*}
Since the analyticity radius \(\lambda(t)\) will not be modified during the estimates, we will from now on denote \(A_k^{(s)}(\eta)\colonequals A^{(\lambda(t), s)}_k(\eta)\) to lighten the notation. We compute
\begin{align*}
     & \frac{1}{2} \partial_t\norm{g(t)}_{M, \lambda(t), s}^2=\sum_{\abs{\alpha}\leqslant M}\frac{1}{2}\partial_t\norm{A^{(\lambda(t), s)}D_{\eta}^{\alpha}\hat{g}}_{\ell^2_kL^2_{\eta}}^2                                                                                                                                                                                                           \\
     & =\sum_{\abs{\alpha}\leqslant M}\frac{1}{2}\partial_t\sum_{k\in\mathbb{Z}^d}\int_{\mathbb{R}^d}\abs*{A^{(\lambda(t),s)}_k(\eta)D_{\eta}^{\alpha}\hat{g}_k(t, \eta)}^2\dd\eta                                                                                                                                                                                                                   \\
     & =\begin{aligned}[t]
             & \dot{\lambda}(t)\norm{g(t)}_{M, \lambda(t), s+1/2}^2                                                                                                                                                                                                                                                                  \\
             & +\sum_{\abs{\alpha}\leqslant M}\sum_{k\in\mathbb{Z}^d}\int_{\mathbb{R}^d}A^{(s)}_k(\eta)D_{\eta}^{\alpha}\hat{g}_k(t, \eta)\cdot A^{(s)}_k(\eta)D_{\eta}^{\alpha}[k\cdot\nabla_{\eta}\hat{g}_k(t, \eta)]\dd\eta                                                                                                       \\
             & +\sum_{\abs{\alpha}\leqslant M}\sum_{k\in\mathbb{Z}^d}\int_{\mathbb{R}^d}A^{(s)}_k(\eta)D_{\eta}^{\alpha}\hat{g}_k(t, \eta)\cdot A^{(s)}_k(\eta)D_{\eta}^{\alpha}[(k\hat{V}_k(t))\ast_k(\eta\hat{g}_k(t, \eta))]\frac{\dd\eta}{(2\pi)^d}                                                                              \\
             & +\sum_{\abs{\alpha}\leqslant M}\sum_{k\in\mathbb{Z}^d}\int_{\mathbb{R}^d}A^{(s)}_k(\eta)D_{\eta}^{\alpha}\hat{g}_k(t, \eta)\cdot A^{(s)}_k(\eta)D_{\eta}^{\alpha}[(k(\hat{V}^{\ext})_k(t))\ast_k(\eta\hat{g}_k(t, \eta))]\frac{\dd\eta}{(2\pi)^d}                                                                     \\
             & \begin{aligned}[t]
               + \sum_{\abs{\alpha}\leqslant M}\sum_{k\in\mathbb{Z}^d}\int_{\mathbb{R}^d}  A^{(s)}_k(\eta)D_{\eta}^{\alpha} & \hat{g}_k(t, \eta)                                                                                                                                                \\
                                                                                                                            & \cdot A^{(s)}_k(\eta)\left[k\ultrawidehat{\left(V^{\ext}_1-V^{\ext}_2\right)}_k(t)\right]\ast_k D_{\eta}^{\alpha}(\eta\hat{f}_k(t, \eta))\frac{\dd\eta}{(2\pi)^d}
           \end{aligned}
        \end{aligned} \\
     & \equalscolon\dot{\lambda}(t)\norm{g(t)}_{M, \lambda(t), s+1/2}^2+\I+\II+\II^{\ext}+\III.
\end{align*}
The first term allows compensating the losses of regularity---one of the motivations for our choice to work with analytic norms. It remains to estimate the other four terms. The goal in the estimates is thus to distribute regularity appropriately among the different terms so as not to exceed the loss that can be compensated by this first term.

\begin{claim}\label{claim_proofISA_estimate_I}
    \begin{equation}
        \abs{\I}\lesssim s\norm{g(t)}_{M, \lambda(t),s}^2+\lambda(t)\norm{g(t)}_{M, \lambda(t),s+1/2}^2.
    \end{equation}
\end{claim}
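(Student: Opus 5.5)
The plan is to exploit the fact that the free-transport term \(\I\) is antisymmetric up to the commutator with the weight \(A^{(s)}\). That commutator can be computed exactly by an integration by parts in \(\eta\). Fix a multi-index \(\alpha\) with \(\abs{\alpha}\leqslant M\) and set \(h_k(\eta)\colonequals D_\eta^\alpha\hat{g}_k(t,\eta)\). Since the factor \(k\) in \(k\cdot\nabla_\eta\) does not depend on \(\eta\), the operators \(D_\eta^\alpha\) and \(k\cdot\nabla_\eta\) commute, so \(D_\eta^\alpha[k\cdot\nabla_\eta\hat{g}_k]=k\cdot\nabla_\eta h_k\). The velocity moments therefore create no extra terms here, and the \(\alpha\)-th contribution to \(\I\) is
\[
    \sum_{k\in\mathbb{Z}^d}\int_{\mathbb{R}^d}\bigl(A^{(s)}_k(\eta)\bigr)^2\,\Re\bigl(\overline{h_k(\eta)}\,k\cdot\nabla_\eta h_k(\eta)\bigr)\dd\eta .
\]
Here I take real parts, since \(\I\) comes from differentiating a squared norm and the dot product is to be read as \(\Re(\bar a b)\).

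Next I write \(\Re(\bar h\,k\cdot\nabla_\eta h)=\tfrac12 k\cdot\nabla_\eta\abs{h}^2\) and integrate by parts in \(\eta\), for each fixed \(k\). The boundary terms vanish by the decay of \(g\) in the analytic norm; if needed, one can argue first for Schwartz-class truncations and pass to the limit. This moves the derivative onto the weight:
\[
    -\frac12\sum_{k}\int_{\mathbb{R}^d}k\cdot\nabla_\eta\bigl(A^{(s)}_k(\eta)^2\bigr)\abs{h_k(\eta)}^2\dd\eta .
\]
Then I compute \(\nabla_\eta\bigl(\inp{k}{\eta}^{2s}e^{2\lambda\inp{k}{\eta}}\bigr)=2A^{(s)}_k(\eta)^2\bigl(s\,\eta\,\inp{k}{\eta}^{-2}+\lambda\,\eta\,\inp{k}{\eta}^{-1}\bigr)\). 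Using \(\abs{k}\abs{\eta}\leqslant\inp{k}{\eta}^2\) gives
\[
    \abs*{k\cdot\nabla_\eta A^{(s)}_k(\eta)^2}\leqslant 2A^{(s)}_k(\eta)^2\bigl(s+\lambda\inp{k}{\eta}\bigr)=2s\,A^{(s)}_k(\eta)^2+2\lambda\,A^{(s+1/2)}_k(\eta)^2 .
\]
This holds because \(A^{(s)}_k(\eta)^2\inp{k}{\eta}=A^{(s+1/2)}_k(\eta)^2\).

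Plugging this bound back in and summing over \(\abs{\alpha}\leqslant M\) yields \(\abs{\I}\leqslant s\norm{g(t)}_{M,\lambda(t),s}^2+\lambda(t)\norm{g(t)}_{M,\lambda(t),s+1/2}^2\), which is the claim. There is no real obstacle; the only points needing care are the following:
\begin{itemize}
    \item checking that \(D_\eta^\alpha\) commutes with the transport term (true because the coefficient is \(k\), not \(\eta\));
    \item justifying the integration by parts;
    \item keeping track of the half-derivative loss \(\lambda\inp{k}{\eta}\), which is exactly what will later be absorbed by the term \(\dot\lambda(t)\norm{g(t)}_{M,\lambda(t),s+1/2}^2\) through the choice of the affine \(\lambda(t)\).
\end{itemize}
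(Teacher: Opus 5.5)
Your proposal is correct and follows essentially the same route as the paper. It commutes \(D_\eta^\alpha\) past \(k\cdot\nabla_\eta\), integrates by parts in \(\eta\) to put the derivative on the weight \(A^{(s)}_k(\eta)^2\), and bounds the factor \(\abs{k\cdot\eta}\,\inp{k}{\eta}^{-2}\) uniformly, obtaining an explicit constant via \(\abs{k}\abs{\eta}\leqslant\inp{k}{\eta}^2\) where the paper only writes \(\lesssim 1\).
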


\begin{proof}
    By integrating by parts, we obtain
    \begin{align*}
        \I & =\sum_{\abs{\alpha}\leqslant M}\sum_{k\in\mathbb{Z}^d}\int_{\mathbb{R}^d}A^{(s)}_k(\eta)D_{\eta}^{\alpha}\hat{g}_k(t, \eta)\cdot A^{(s)}_k(\eta)k\cdot\nabla_{\eta}D_{\eta}^{\alpha}\hat{g}_k(t, \eta)\dd\eta                                                                                                       \\
           & =\sum_{\abs{\alpha}\leqslant M}\sum_{k\in\mathbb{Z}^d}\int_{\mathbb{R}^d}k\cdot\frac{1}{2}\nabla_{\eta}\abs{D_{\eta}^{\alpha}\hat{g}_k(t, \eta)}^2A^{(s)}_k(\eta)^2\dd\eta                                                                                                                                          \\
           & =-\sum_{\abs{\alpha}\leqslant M}\sum_{k\in\mathbb{Z}^d}\int_{\mathbb{R}^d}\abs{D_{\eta}^{\alpha}\hat{g}_k(t, \eta)}^2k\cdot\nabla_{\eta}A^{(s)}_k(\eta)A^{(s)}_k(\eta)\dd\eta                                                                                                                                       \\
           & =\begin{aligned}[t]
                  -\sum_{\abs{\alpha}\leqslant M}\sum_{k\in\mathbb{Z}^d}\int_{\mathbb{R}^d} & \abs{D_{\eta}^{\alpha}\hat{g}_k(t, \eta)}^2k\cdot\left[s\eta\inp{k}{\eta}^{s-2}+\inp{k}{\eta}^{s}\lambda(t)\eta\inp{k}{\eta}^{-1}\right] \\
                                                                                            & \cdot e^{\lambda(t)\inp{k}{\eta}}A^{(s)}_k(\eta)\dd\eta
              \end{aligned} \\
           & =-\sum_{\abs{\alpha}\leqslant M}\sum_{k\in\mathbb{Z}^d}\int_{\mathbb{R}^d}\abs{D_{\eta}^{\alpha}\hat{g}_k(t, \eta)}^2\frac{k\cdot\eta}{\inp{k}{\eta}^2}\left[s+\lambda(t)\inp{k}{\eta}\right]\inp{k}{\eta}^{2s}e^{2\lambda(t)\inp{k}{\eta}}\dd\eta.
    \end{align*}
    Thus, since \(\abs*{\frac{k\cdot\eta}{\inp{k}{\eta}^2}}\lesssim 1\), \[
        \abs{\I}\lesssim s\norm{g}_{M, \lambda(t),s}^2+\lambda(t)\norm{g}_{M, \lambda(t),s+1/2}^2.
    \]
\end{proof}

\begin{claim}\label{claim_proofISA_estimate_II}
    For every \(\sigma>d/2\), every \(M>d/2\),
    \begin{equation}
        \begin{aligned}
            \abs{\II} & \lesssim_{d, \sigma}\norm{\rho(t)}_{\lambda(t),s+\sigma-1}\left[\norm{g(t)}_{M,\lambda(t),s}^2+\norm{g(t)}_{M,\lambda(t),s+1/2}^2\right]  \\
                      & \lesssim_{d, \sigma}\norm{f(t)}_{M, \lambda(t),s+\sigma-1}\left[\norm{g(t)}_{M,\lambda(t),s}^2+\norm{g(t)}_{M,\lambda(t),s+1/2}^2\right].
        \end{aligned}
    \end{equation}
\end{claim}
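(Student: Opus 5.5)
We estimate \(\II\), the term carrying the self-consistent field \(\nabla_xV\cdot\nabla_vg\), by the standard transport-commutator argument in analytic norms: move \(D^\alpha_\eta\) through the convolution, exploit antisymmetry to cancel the top-order part, and control the remaining commutator with \eqref{eq_jap_sublinear} and the mean value theorem.

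First distribute \(D^\alpha_\eta\) with the Leibniz rule: \(D^\alpha_\eta(\eta\hat g)=\eta D^\alpha_\eta\hat g+\sum_{|\beta|=|\alpha|-1}c_\beta D^\beta_\eta\hat g\). The lower-order pieces contain no factor \(\eta\). Each of them is a product of a \(V\)-factor with \(D^\beta_\eta\hat g\), \(|\beta|<M\), tested against \(A^{(s)}D^\alpha_\eta\hat g\). Using \eqref{eq_jap_multiplicative} to write \(A^{(s)}_k(\eta)\lesssim A^{(s)}_{k-\ell}(\eta)\,\jap{\ell}^s e^{\lambda\jap{\ell}}\), and then applying \eqref{eq_young1} with \(\jap{\ell}^{\sigma}\), these pieces are bounded by \(\norm{\jap{k}^{s+\sigma}e^{\lambda\jap k}k\hat V_k}_{\ell^2}\norm{g}_{M,\lambda,s}^2\). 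The Poisson equation gives \(|k\hat V_k|\le|\hat\rho_k|/|k|\), so this is at most \(\norm{\rho}_{\lambda,s+\sigma-1}\norm{g}^2_{M,\lambda,s}\).

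For the main piece, write
\[
A^{(s)}_k(\eta)=A^{(s)}_{k-\ell}(\eta)+\bigl(A^{(s)}_k(\eta)-A^{(s)}_{k-\ell}(\eta)\bigr).
\]
\begin{itemize}
\item \emph{Transport part.} The contribution of \(A^{(s)}_{k-\ell}(\eta)\) is, in physical space, \(\int (A\,\partial^\alpha g)\,\nabla_xV\cdot\nabla_v(A\,\partial^\alpha g)\). Since \(\nabla_xV\) is independent of \(v\), integrating by parts in \(v\) shows this vanishes; equivalently, in Fourier, \(\ell\hat V_\ell\cdot\eta\) paired symmetrically integrates to zero after the change of variables \(k\leftrightarrow k-\ell\), using that \(V\) is real.
\item \emph{Commutator part.} By the mean value theorem and \eqref{eq_jap_sublinear},
\[
|A^{(s)}_k(\eta)-A^{(s)}_{k-\ell}(\eta)|\lesssim \jap{\ell}\bigl(s\inp{k}{\eta}^{s-1}+\lambda\inp{k}{\eta}^{s}\bigr)e^{\lambda\inp{k-\ell}{\eta}}e^{\lambda\jap\ell}+\dots
\]
Together with the factor \(|\eta|\le\inp{k-\ell}{\eta}\), this produces at most one extra power \(\inp{\cdot}{\cdot}\). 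Splitting it as \(\inp{k}{\eta}^{1/2}\inp{k-\ell}{\eta}^{1/2}\) up to \(\jap{\ell}\)-factors, via \eqref{eq_jap_multiplicative}, places half a derivative on each copy of \(g\). The \(s\)-term contributes \(\norm{g}_{M,\lambda,s}^2\) and the \(\lambda\)-term contributes \(\norm{g}_{M,\lambda,s+1/2}^2\). In each case the \(V\)-factor carries \(\jap\ell^{s+1}e^{\lambda\jap\ell}|\ell\hat V_\ell|\). Applying \eqref{eq_young1} with \(\sigma\) again and the Poisson relation gives \(\norm{\rho}_{\lambda,s+\sigma}\) crudely; sharpening by \(|\ell\hat V_\ell|\le|\hat\rho_\ell|/|\ell|\) reduces the regularity index to \(s+\sigma-1\), absorbing any harmless \(s,\lambda\) constants.
\end{itemize}

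The second inequality in the claim then follows from \Cref{lem_rho_leq_f} with \(M>d/2\), since \(\rho=\rho[f]\).

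The main obstacle is the bookkeeping in the commutator: the extra derivative must be split symmetrically so that exactly \(1/2\) falls on each copy of \(g\), and the \(V\)-factor must remain at regularity \(s+\sigma-1\). For this I would treat the regions \(\jap\ell\le\inp{k-\ell}{\eta}\) and \(\jap\ell>\inp{k-\ell}{\eta}\) separately. In the second region the whole weight is moved onto \(\rho\), but this needs care so that no more than \(s+\sigma-1\) derivatives land on \(\rho\).
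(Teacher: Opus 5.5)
\textbf{The commutator step does not reach the index \(s+\sigma-1\).} Your treatment of the lower-order terms matches the paper, and the vanishing of the transport part is correct. The problem is in the commutator.

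In the \(\lambda\)-part of \(A^{(s)}_k(\eta)-A^{(s)}_{k-\ell}(\eta)\), the mean value theorem produces a factor of size \(\jap{\ell}\). Transferring the remaining weight \(\inp{\cdot}{\eta}^{s}e^{\lambda\inp{\cdot}{\eta}}\) onto \(\hat g_{k-\ell}\) via \eqref{eq_jap_sublinear}--\eqref{eq_jap_multiplicative} costs a further \(\jap{\ell}^{s}e^{\lambda\jap{\ell}}\). So, as you say, \(V\) carries \(\jap{\ell}^{s+1}e^{\lambda\jap{\ell}}\abs{\ell\hat V_\ell}\). After \eqref{eq_young1} this becomes \(\jap{\ell}^{s+\sigma+1}e^{\lambda\jap{\ell}}\abs{\ell\hat V_\ell}\leqslant\jap{\ell}^{s+\sigma+1}e^{\lambda\jap{\ell}}\abs{\hat\rho_\ell}/\abs{\ell}\lesssim\jap{\ell}^{s+\sigma}e^{\lambda\jap{\ell}}\abs{\hat\rho_\ell}\), i.e. \(\norm{\rho}_{\lambda,s+\sigma}\).

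Reaching \(s+\sigma\) already uses the relation \(\abs{\ell\hat V_\ell}\leqslant\abs{\hat\rho_\ell}/\abs{\ell}\). Invoking it a second time to ``sharpen'' to \(s+\sigma-1\) counts the Poisson gain twice. As written, the \(\lambda\)-part of your commutator is only bounded by \(\lambda\norm{\rho}_{\lambda,s+\sigma}\norm{g}^2_{M,\lambda,s+1/2}\). That would be harmless for the later Grönwall argument, but it is not the stated claim. The region splitting you postpone to the last paragraph is therefore the missing step, not a bookkeeping detail.

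\textbf{The cancellation works against you here.} The commutator moves one derivative from \(g\) onto \(V\). The claim, however, tolerates losing half a derivative on each copy of \(g\); that loss is later absorbed by \(\dot\lambda(t)\norm{g}^2_{M,\lambda(t),s+1/2}\). What it cannot afford is an extra derivative on \(V\).

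The paper bounds the whole top-order term directly, with no cancellation:
\(A^{(s)}_k(\eta)\lesssim B^{(s)}_\ell A^{(s)}_{k-\ell}(\eta)\) together with \(\abs{\eta}\leqslant\inp{k}{\eta}^{1/2}\inp{k-\ell}{\eta}^{1/2}\) puts \(A^{(s+1/2)}\) on both copies of \(g\). Meanwhile \(V\) carries only \(\abs{\ell}B^{(s)}_\ell\abs{\hat V_\ell}\lesssim B^{(s-1)}_\ell\abs{\hat\rho_\ell}\), and \eqref{eq_young1} gives \(\norm{\rho}_{\lambda,s+\sigma-1}\norm{g}^2_{M,\lambda,s+1/2}\) at once.

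Your route can be repaired:
where \(\jap{\ell}\leqslant\inp{k-\ell}{\eta}\), the commutator costs only \(\norm{\rho}_{\lambda,\sigma}\) (since then \(\max(\inp{k}{\eta},\inp{k-\ell}{\eta})\lesssim\inp{k-\ell}{\eta}\), so no \(\jap{\ell}^s\) is paid). Where \(\jap{\ell}>\inp{k-\ell}{\eta}\), you can bound \(\abs{A^{(s)}_k-A^{(s)}_{k-\ell}}\leqslant A^{(s)}_k+A^{(s)}_{k-\ell}\) and estimate directly. But that second region is just the paper's estimate again.
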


\begin{proof}
    We expand the derivative on the last term
    \begin{align*}
         & \sum_{\abs{\alpha}\leqslant M}\sum_{k\in\mathbb{Z}^d}\int_{\mathbb{R}^d}A^{(s)}_k(\eta)D_{\eta}^{\alpha}\hat{g}_k(t, \eta)\cdot A^{(s)}_k(\eta)\sum_{\ell\in\mathbb{Z}^d}\left[\hat{V}_\ell(t)\ell\cdot D_{\eta}^{\alpha}(\eta\hat{g}_{k-\ell}(t, \eta))\right]\dd\eta                                                                                                          \\
         & =\begin{aligned}[t]
                 & \sum_{\abs{\alpha}\leqslant M}\sum_{k\in\mathbb{Z}^d}\int_{\mathbb{R}^d}A^{(s)}_k(\eta)D_{\eta}^{\alpha}\hat{g}_k(t, \eta)\cdot A^{(s)}_k(\eta)\sum_{\ell\in\mathbb{Z}^d}\left[\hat{V}_\ell(t)\ell\cdot\eta D_{\eta}^{\alpha}\hat{g}_{k-\ell}(t, \eta)\right]\dd\eta \\
                 & + \sum_{\abs{\alpha}\leqslant M}\sum_{k\in\mathbb{Z}^d}\int_{\mathbb{R}^d}A^{(s)}_k(\eta)D_{\eta}^{\alpha}\hat{g}_k(t, \eta)\cdot A^{(s)}_k(\eta)\sum_{\ell\in\mathbb{Z}^d}\sum_{\substack{j\leqslant\alpha                                                          \\ \abs{j}=1}}\left[\hat{V}_\ell(t)\ell_jD_{\eta}^{\alpha-j}\hat{g}_{k-\ell}(t, \eta)\right]\dd\eta
            \end{aligned} \\
         & \equalscolon \II_a+\II_b.
    \end{align*}
    We begin by treating term \(\II_a\). Using \eqref{eq_jap_sublinear}, \eqref{eq_jap_multiplicative}, we notice that \(A_k^{(s)}\lesssim B_{\ell}^{(s)}A_{k-\ell}^{(s)}\). Also, by elliptic regularity, \(\abs{\hat{V}_{\ell}}\leqslant\frac{\abs{\hat{\rho}_{\ell}}}{\abs{\ell}^2}\lesssim\frac{\abs{\hat{\rho}_{\ell}}}{\jap{\ell}^2}\) for all \(\ell\in\mathbb{Z}^d_*\). Then, by Young's inequality \eqref{eq_young1},
    \begin{align*}
        \abs{\II_a} & \lesssim\begin{aligned}[t]
                                  \sum_{\abs{\alpha}\leqslant M}\sum_{k\in\mathbb{Z}^d}\sum_{\ell\in\mathbb{Z}^d_*}\int_{\mathbb{R}^d} & \abs{\eta}^{1/2}\abs{A^{(s)}_k(\eta)D_{\eta}^{\alpha}\hat{g}_k(t, \eta)}                                                               \\
                                                                                                                                       & \cdot\abs{\ell B_{\ell}^{(s)}\hat{V}_{\ell}(t)}\abs{\eta}^{1/2}\abs{A_{k-\ell}^{(s)}D_{\eta}^{\alpha}\hat{g}_{k-\ell}(t, \eta)}\dd\eta\end{aligned} \\
                    & \lesssim\begin{aligned}[t]
                                  \sum_{\abs{\alpha}\leqslant M}\sum_{k\in\mathbb{Z}^d}\sum_{\ell\in\mathbb{Z}^d_*}\int_{\mathbb{R}^d} & \abs{A^{(s+1/2)}_k(\eta)D_{\eta}^{\alpha}\hat{g}_k(t, \eta)}                                                               \\
                                                                                                                                       & \cdot\abs{B_{\ell}^{(s-1)}\hat{\rho}_{\ell}(t)}\abs{A_{k-\ell}^{(s+1/2)}D_{\eta}^{\alpha}\hat{g}_{k-\ell}(t, \eta)}\dd\eta\end{aligned}                 \\
                    & \lesssim_{d, \sigma}\sum_{\abs{\alpha}\leqslant M}\norm{A^{(\lambda(t),s+1/2)}_k(\eta)D_{\eta}^{\alpha}\hat{g}_k(t, \eta)}_{\ell^2_kL^2_{\eta}}^2\norm{\jap{\ell}^{\sigma}B_{k}^{(\lambda(t), s-1)}\hat{\rho}_{k}(t)}_{\ell^2_k}                                                                                                                            \\
                    & \lesssim_{d, \sigma}\norm{\rho}_{\lambda(t),s+\sigma-1}\norm{g}_{M,\lambda(t),s+1/2}^2.
    \end{align*}
    Similarly for term \(\II_b\), using Cauchy-Schwarz inequality, we obtain
    \begin{align*}
        \abs{\II_b} & \lesssim_{d, \sigma}\sum_{\abs{\alpha}\leqslant M}\sum_{j\leqslant\alpha:\abs{j}=1}\norm{v^{\alpha}g}_{\lambda(t), s}\norm{\rho}_{\lambda(t),s+\sigma-1}\norm{v^{\alpha-j}g}_{\lambda(t), s}             \\
                    & \lesssim_{d, \sigma}\norm{\rho}_{\lambda(t),s+\sigma-1}\norm{g}_{M,\lambda(t),s}\left[\sum_{\abs{\alpha}\leqslant M}\sum_{j\leqslant\alpha:\abs{j}=1}\norm{v^{\alpha-j}g}_{\lambda(t), s}^2\right]^{1/2} \\
                    & \lesssim_{d, \sigma}\norm{\rho}_{\lambda(t),s+\sigma-1}\norm{g}_{M,\lambda(t),s}^2.
    \end{align*}
\end{proof}

\begin{claim}
    For every \(\sigma>d/2\),
    \begin{equation}
        \abs{\II^{\ext}}\lesssim_{d, \sigma}\norm{V^{\ext}(t)}_{\lambda(t),s+\sigma+1}\left[\norm{g(t)}_{M,\lambda(t),s}^2+\norm{g(t)}_{M,\lambda(t),s+1/2}^2\right].
    \end{equation}
\end{claim}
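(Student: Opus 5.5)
The plan is to copy the proof of \Cref{claim_proofISA_estimate_II} almost verbatim, replacing the self-consistent potential \(V\) by \(V^{\ext}\). The one real difference is that we no longer have the elliptic gain \(\abs{\hat{V}_\ell}\lesssim\abs{\hat\rho_\ell}/\jap{\ell}^2\). We must pay with the regularity of \(V^{\ext}\) itself, which explains the index \(s+\sigma+1\) instead of \(s+\sigma-1\).

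First, write the convolution explicitly as \(\sum_\ell \hat{V}^{\ext}_\ell(t)\,\ell\cdot D^\alpha_\eta(\eta\hat g_{k-\ell})\). Distribute \(D^\alpha_\eta\) by the Leibniz rule. This gives \(\II^{\ext}=\II^{\ext}_a+\II^{\ext}_b\), where \(\II^{\ext}_a\) carries \(\ell\cdot\eta\, D^\alpha_\eta\hat g_{k-\ell}\) and \(\II^{\ext}_b\) carries \(\ell_j D^{\alpha-j}_\eta\hat g_{k-\ell}\) with \(\abs{j}=1\), \(j\leqslant\alpha\).

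For \(\II^{\ext}_a\):
\begin{itemize}
\item Use \eqref{eq_jap_sublinear} and \eqref{eq_jap_multiplicative} to bound \(A^{(s)}_k(\eta)\lesssim B^{(s)}_\ell A^{(s)}_{k-\ell}(\eta)\). The exponentials split by the sublinear inequality and the polynomial weights by the multiplicative one.
\item Split \(\abs{\eta}\leqslant \abs{\eta}^{1/2}\abs{\eta}^{1/2}\), bounding one factor by \(\inp{k}{\eta}^{1/2}\) and the other by \(\inp{k-\ell}{\eta}^{1/2}\). Both copies of \(g\) then appear with the weight \(A^{(s+1/2)}\).
\item The remaining factor on the potential is \(\abs{\ell}B^{(s)}_\ell\abs{\hat V^{\ext}_\ell}\leqslant B^{(s+1)}_\ell\abs{\hat V^{\ext}_\ell}\).
\item Young's inequality \eqref{eq_young1} with \(\sigma>d/2\) then gives \(\abs{\II^{\ext}_a}\lesssim_{d,\sigma}\norm{\jap{\ell}^\sigma B^{(s+1)}_\ell\hat V^{\ext}_\ell}_{\ell^2}\norm{g}^2_{M,\lambda,s+1/2}=\norm{V^{\ext}}_{\lambda,s+\sigma+1}\norm{g}^2_{M,\lambda,s+1/2}\).
\end{itemize}

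For \(\II^{\ext}_b\), no \(\eta\) factor appears. The same splitting of \(A^{(s)}_k\), followed by Young's inequality, bounds each term by \(\norm{v^\alpha g}_{\lambda,s}\norm{V^{\ext}}_{\lambda,s+\sigma+1}\norm{v^{\alpha-j}g}_{\lambda,s}\). Cauchy--Schwarz over \(\alpha\) and \(j\) then yields \(\norm{V^{\ext}}_{\lambda,s+\sigma+1}\norm{g}^2_{M,\lambda,s}\). Here \(\abs{\alpha-j}\leqslant M\), so these moments are controlled by \(\norm{g}_{M,\lambda,s}\).

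The only delicate point is the bookkeeping in \(\II^{\ext}_a\). The full weight \(B^{(s)}_\ell\) and the extra \(\abs\ell\) from the gradient must be placed on \(V^{\ext}\), while exactly half a derivative goes onto each \(g\) factor. This is why no gain is available without the Poisson structure. It is harmless because the bound is linear in \(\norm{V^{\ext}}_{\lambda(t),s+\sigma+1}\), which is finite for \(\lambda(t)<\lambda_0\) by \eqref{eq_jap_exp} and the hypothesis \(\sup_t\norm{V^{\ext}_j}_{\lambda_0,s}\leqslant C\). The \(s+1/2\) contribution will later be absorbed by the \(\dot\lambda\) term.
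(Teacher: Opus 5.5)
Your proposal is correct and takes the same approach as the paper. The paper's proof simply states that the argument is identical to that of \Cref{claim_proofISA_estimate_II}, and you carry out that transcription, correctly noting that the only change is the loss of the elliptic gain $\abs{\hat V_\ell}\lesssim\abs{\hat\rho_\ell}/\jap{\ell}^2$, which shifts the index on the potential from $s+\sigma-1$ to $s+\sigma+1$.
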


\begin{proof}
    The proof is identical to the previous one---\Cref{claim_proofISA_estimate_II}.
\end{proof}

Since only the derivative of the exterior potentials appear in the equation, up to a time dependent constant, we may now assume that, for all \(t\in [0, T]\),
\[
    \int_{\mathbb{T}^d}V^{\ext}_1(t, x)-V^{\ext}_2(t, x)\dd x=0.
\]

\begin{claim}
    For every \(\sigma>d/2\), every \(M>d/2+2\), and for \(0<\lambda(t)<\lambda_0'\) for some \(0<\lambda_0'<\lambda_0\) small enough,
    \begin{equation}
        \begin{aligned}
            \abs{\III} & \lesssim_{d, \sigma, \rho, \lambda_0'}\norm{f(t)}_{M, \lambda(t),s+\sigma+3/2}\norm{g(t)}_{M,\lambda(t),s+1/2}^2
        \end{aligned}
    \end{equation}
\end{claim}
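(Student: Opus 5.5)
We bound \(\III\) by splitting it into a potential factor and a velocity-moment factor, and by using \Cref{prop_elliptic_analytic_regularity} to trade \(\nabla_x(V^{\ext}_1-V^{\ext}_2)\) for \(g\) itself.

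\emph{Step 1 (splitting the convolution).} Write \(W\colonequals V^{\ext}_1-V^{\ext}_2\), which has zero mean. Expanding the \(D^\alpha_\eta\) of the product gives
\[
D_\eta^\alpha(\eta\hat f_k)=\eta D_\eta^\alpha\hat f_k+\sum_{j\le\alpha,\abs{j}=1}D_\eta^{\alpha-j}\hat f_k .
\]
By \eqref{eq_jap_sublinear} and \eqref{eq_jap_multiplicative} we have \(A^{(s)}_k(\eta)\lesssim B^{(s)}_\ell A^{(s)}_{k-\ell}(\eta)\). We then put all the regularity on \(W\) and on the \(g\)-factor, and keep \(f\) in a weighted \(\ell^1_kL^2_\eta\)-type bound. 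Concretely, we write
\[
\abs{\ell}B^{(s)}_\ell\lesssim B^{(s+1/2)}_\ell\cdot\jap{\ell}^{1/2},
\]
and move the extra \(\jap{\ell}^{1/2}\) onto the \(g\) side. We also absorb the factor \(\eta\) (a weight \(\inp{k-\ell}{\eta}\)) into the \(f\) norm; this costs one extra derivative and one moment on \(f\). The result should be
\[
\abs{\III}\lesssim \norm{g}_{M,\lambda,s+1/2}\,\norm{\nabla_x W}_{\lambda,s+1/2}\,\norm{f}_{M,\lambda,s+\sigma+1}.
\]
Here we use Young's inequality \eqref{eq_young2} with \(\sigma>d/2\) placed on \(f\). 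For the \(\eta\)-integral we either use Cauchy--Schwarz directly or a Sobolev embedding in \(\eta\) (weights \(v^\alpha\)). The requirement \(M>d/2+2\) leaves room for the extra \(v\)-factor and for the moments needed later.

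\emph{Step 2 (elliptic bound on \(W\)).} Apply \Cref{prop_elliptic_analytic_regularity} to \eqref{eq_proofISA_elliptic-diff} at radius \(\lambda(t)<\lambda_0'\) and regularity index \(s-1/2\). This gives
\[
\norm{\nabla_xW}_{\lambda,s+1/2}\le\norm{W}_{\lambda,s+3/2}\lesssim_{d,\rho,\lambda_0'}\norm{\nabla_x^2:K}_{\lambda,s-1/2}\lesssim\norm{K}_{\lambda,s+3/2}.
\]
This is exactly where the loss of one derivative appears.

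\emph{Step 3 (controlling \(K\) by \(g\); the main obstacle).} We need \(\norm{K}_{\lambda,s+3/2}\lesssim\norm{g}_{M,\lambda,s+1/2}\), but a naive application of \Cref{lem_rho_leq_f} to \(v\otimes v\,g\) only gives \(\norm{K}_{\lambda,s+1/2}\lesssim\norm{g}_{M,\lambda,s+1/2}\) with \(M>d/2+2\). The derivative count is therefore off by one.

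To fix it, we would move one derivative from \(K\) onto \(f\) before applying the elliptic estimate. Since \(\nabla_x\cdot J=0\), we have \(\nabla^2_x:K=\nabla_x\cdot(\rho\nabla_x W)\). The claimed bound has \(f\) at index \(s+\sigma+3/2\), which suggests shifting the bookkeeping: take only \(\norm{W}_{\lambda,s+1/2}\) from the elliptic estimate (index \(s-3/2\)), so that
\[
\norm{W}_{\lambda,s+1/2}\lesssim\norm{K}_{\lambda,s+1/2}\lesssim\norm{g}_{M,\lambda,s+1/2}.
\]
The remaining \(\abs{\ell}\) factor is then handled by \eqref{eq_jap_multiplicative}: we place \(\jap{\ell}\le\jap{k}\jap{k-\ell}\), giving a half-derivative to each of the two \(g\) factors and an extra \(\inp{k-\ell}{\eta}^{1/2}\) to \(f\). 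This accounts for the \(s+\sigma+3/2\) in the claim.

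Combining the three steps yields
\[
\abs{\III}\lesssim_{d,\sigma,\rho,\lambda_0'}\norm{f}_{M,\lambda,s+\sigma+3/2}\,\norm{g}_{M,\lambda,s+1/2}^2 .
\]
The delicate point is checking that the \(k\)- versus \((k-\ell)\)-weights distribute consistently, with the large weights landing on \(f\) and at most \(s+1/2\) landing on \(g\) and \(W\).
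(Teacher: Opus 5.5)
Your Step 3 bookkeeping is correct and is essentially the paper's proof. Elliptic regularity is used with no net gain, \(\norm{V^{\ext}_1-V^{\ext}_2}_{\lambda,s+1/2}\lesssim\norm{K}_{\lambda,s+1/2}\lesssim\norm{g}_{M,\lambda,s+1/2}\). The factor \(\abs{\ell}\) is split as \(\jap{\ell}^{1/2}\cdot\jap{\ell}^{1/2}\lesssim\jap{\ell}^{1/2}\jap{k}^{1/2}\jap{k-\ell}^{1/2}\). Finally, \(\abs{\eta}\) and the \(\jap{k-\ell}^{\sigma}\) from Young's inequality are placed on \(f\).

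Steps 1--2 are only a detour: the bound stated there is true but unusable. In Step 3, write \(\jap{\ell}^{1/2}\lesssim\jap{k}^{1/2}\jap{k-\ell}^{1/2}\) rather than \(\jap{\ell}\le\jap{k}\jap{k-\ell}\), since the latter would put a full extra derivative on \(g\).
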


\begin{proof}
    We begin by expanding the derivative \(D_{\eta}^{\alpha}\) and consider the term containing the highest order derivative \(\eta D_{\eta}^{\alpha}f_{k-\ell}(\eta)\); the other lower order terms are handled similarly. We must distribute regularity on the three factors. Using \eqref{eq_jap_multiplicative}, we have \(\jap{\ell}^{1/2}\lesssim\jap{k}^{1/2}\jap{k-\ell}^{1/2}\), and hence
    \[
        \abs*{A^{(s)}_k(\eta)\ell\cdot\eta A^{(s)}_k(\eta)}\lesssim\abs*{\inp{k}{\eta}^{1/2}A^{(s)}_k(\eta)}\abs{\jap{\ell}^{1/2}B^{(s)}_k}\abs*{\inp{k-\ell}{\eta}^{1/2}A^{(s)}_{k-\ell}(\eta)}.
    \]
    Thus, by Young's inequality \eqref{eq_young1},
    \begin{align*}
         & \abs*{\sum_{\abs{\alpha}\leqslant M}\sum_{k,\ell\in\mathbb{Z}^d}\int_{\mathbb{R}^d}A^{(s)}_k(\eta)D_{\eta}^{\alpha}\hat{g}_k(t, \eta)\cdot A^{(s)}_k(\eta)\ultrawidehat{\left(V^{\ext}_1-V^{\ext}_2\right)}_{\ell}(t)\ell\cdot\eta D_{\eta}^{\alpha}\hat{f}_{k-\ell}(t, \eta)\dd\eta} \\
         & \lesssim\sum_{\abs{\alpha}\leqslant M}\norm{A^{(s+1/2)}D_{\eta}^{\alpha}\hat{g}_k(t)}_{\ell^2_kL^2_{\eta}}\norm{B^{(s+1/2)}\ultrawidehat{\left(V^{\ext}_1-V^{\ext}_2\right)}(t)}_{\ell^2_k}\norm{A^{(s+\sigma+3/2)}D_{\eta}^{\alpha}\hat{f}(t)}_{\ell^2_kL^2_{\eta}}                  \\ 
         & \lesssim_{d, \sigma}\norm{f(t)}_{M, \lambda(t),s+\sigma+3/2}\norm{g(t)}_{M,\lambda(t),s+1/2}\norm{(V^{\ext}_1-V^{\ext}_2)(t)}_{\lambda(t), s+1/2}.
    \end{align*}

    Moreover, by elliptic regularity---\Cref{prop_elliptic_analytic_regularity}---of equation \eqref{eq_proofISA_elliptic-diff}, we have
    \[
        \norm{(V^{\ext}_1-V^{\ext}_2)(t)}_{\lambda(t), s}\lesssim_{d, \rho, \lambda_0'}\norm{K(t)}_{\lambda(t), s}\lesssim_{d, \rho}\norm{g(t)}_{M, \lambda(t), s},
    \]
    for every \(M>d/2+2\) proceeding as in \Cref{lem_rho_leq_f}. Hence,
    \[
        \abs{\III}\lesssim_{d, \sigma, \rho, \lambda_0'}\norm{f(t)}_{M, \lambda(t),s+\sigma+3/2}\norm{g(t)}_{M,\lambda(t),s+1/2}^2.
    \]
\end{proof}

Finally, gathering the previous estimates,
\begin{align*}
     & \partial_t\norm{g(t)}_{M, \lambda(t), s}^2\lesssim_{\substack{d, s, \sigma,                                                                                      \\ \rho, \lambda_0'}}\left(1+\norm{f(t)}_{M, \lambda(t),s+\sigma+1}+\norm{V^{\ext}(t)}_{\lambda(t),s+\sigma-1}\right)\norm{g(t)}_{M,\lambda(t),s}^2 \\
     & +\left(\dot{\lambda}(t)+\lambda(t)+\norm{f(t)}_{M, \lambda(t),s+\sigma+3/2}+\norm{V^{\ext}(t)}_{\lambda(t),s+\sigma+1}\right)\norm{g(t)}_{M,\lambda(t),s+1/2}^2.
\end{align*}

Thus, by choosing \(\lambda(t)=-(\lambda_0'+C)t+\frac{\lambda_0'}{2}\) an affine decreasing function---where we take \(\delta>0\) small enough in \eqref{eq_jap_exp} such that \(\norm{g(t)}_{M, \lambda(t), s+1/2}\) is well-defined---the second term is nonpositive. We can now apply Grönwall's lemma using \(g|_{t=0}=0\) to obtain that \(g=0\) on \([0, \tau]\) where \(\tau\) is the time of vanishing of \(\lambda\). We iterate this process to cover the whole \([0, T]\)---applying Grönwall's lemma with translated initial times from \(\tau\) at each step. This concludes the proof of the theorem.

\section{Proof of \texorpdfstring{\Cref{thm_injectivity_time_analytic}}{Theorem \ref{thm_injectivity_time_analytic}}}

We adapt the proof idea from in \cite{Manfredi_2020, PhysRevA.93.062510}. Recall that \(g\colonequals f_1-f_2\) satisfies
\begin{equation}\label{eq_proofITA_VP-diff}
    \begin{cases}
        \partial_tg+v\cdot\nabla_xg+\nabla_xV\cdot \nabla_vg+\nabla_x\left(V^{\ext}_1-V^{\ext}_2\right)\cdot\nabla_vf+\nabla_xV^{\ext}\cdot\nabla_vg=0, \\
        g|_{t=0}=0,
    \end{cases}
\end{equation} where we denote \(f\colonequals f_1\) and \(V^{\ext}\colonequals V^{\ext}_2\) since there is no distinction in the role played by solutions \(1\) and \(2\). We consider the following quantities
\[
    \begin{cases}J(t, x)\colonequals\int_{\mathbb{R}^d}vg(t, x, v)\dd v, \\ K^k(t, x)\colonequals\int_{\mathbb{R}^d}v^{\otimes k}\otimes vg(t, x, v)\dd v.\end{cases}
\]
We proceed by induction on \(k\in\mathbb{N}\), proving the following property for fixed \(k\in\mathbb{N}\):
\[
    \forall\ell\leqslant k,~\begin{cases}\forall x\in\mathbb{T}^d,~\nabla_x\partial_t^{\ell} (V^{\ext}_1 - V^{\ext}_2)(0, x)=0, \\ \partial_t^{\ell+1}J(t, x)=\rho\nabla_x\partial_t^{\ell}(V^{\ext}_1 - V^{\ext}_2)(t, x) \pm\nabla_x^{\ell+1}:K^{\ell+1}(t, x)+R_{\ell}(t, x), \\ \forall j\leqslant k+1-\ell,~\partial_t^{j}R_{\ell}|_{t=0}=0,\end{cases}
\]
where \(\nabla_x^{\ell}\colonequals(\nabla_x\cdot)^{\ell}\) is the contraction of order \(\ell\). We already know that \(R_0=0\).

Let \(\varphi\in C^{\infty}(\mathbb{T}^d)\). Using equation \eqref{eq_proofITA_VP-diff}, we have
\begin{align*}
    0 & =\partial_t^2\int_{\mathbb{T}^d}(\rho_1-\rho_2)(t, x)\varphi(x)\dd x                                                                                                     \\
      & =-\int_{\mathbb{T}^d}\nabla_x\cdot\partial_tJ(t, x)\varphi(x)\dd x                                                                                                       \\
      & =\int_{\mathbb{T}^d}\partial_tJ(t, x)\cdot\nabla_x\varphi(x)\dd x                                                                                                        \\
      & =\int_{\mathbb{T}^d}\rho(t, x)\nabla_x(V^{\ext}_1-V^{\ext}_2)(t, x)\cdot\nabla_x\varphi(x)\dd x - \int_{\mathbb{T}^d}\nabla_x\cdot K^1(t, x)\cdot\nabla_x\varphi(x)\dd x
\end{align*}
Hence, at \(t=0\) and for \(\varphi(x)=(V^{\ext}_1-V^{\ext}_2)(0, x)\), we have
\[
    0=\int_{\mathbb{T}^d}\rho(0, x)\abs*{\nabla_x(V^{\ext}_1-V^{\ext}_2)}^2(0, x)\dd x.
\]
Therefore, since \(\rho\geqslant 0\) and \(\{\rho(0, x) = 0\}\) is negligible, \(\nabla_x\partial_t^0(V^{\ext}_1 - V^{\ext}_2)=0\), which initializes the induction. Let \(k\in\mathbb{N}^*\) be fixed and assume the property is true for \(k-1\). Then, by the induction hypothesis,
\[
    \partial_t^{k+1}J=\rho\nabla_x\partial_t^{k}(V^{\ext}_1 - V^{\ext}_2) +\nabla_x^{k}:\partial_tK^{k}+\partial_t\rho\nabla_x\partial_t^{k-1}(V^{\ext}_1-V^{\ext}_2)+\partial_tR_{k-1}
\]
and, again by \eqref{eq_proofITA_VP-diff}, we obtain \[
    \partial_tK^{k} =\begin{aligned}[t]
         & -\nabla_x\cdot K^{k+1}-\int_{\mathbb{T}^d}v^{\otimes k}\otimes v(\nabla_x(V^{\ext}_1-V^{\ext}_2)\cdot\nabla_vf)\dd v                                                                          \\
         & +\sum_{j=1}^{k+1}\int_{\mathbb{R}^d}v\otimes\cdots\otimes v\otimes \underbrace{(\nabla_x V+\nabla_x V^{\ext})}_{j\text{\textsuperscript{th} position}}\otimes v\otimes \cdots\otimes vg\dd v.
    \end{aligned}
\]
We then set \[
    \pm R_k\colonequals\begin{aligned}[t]
         & \sum_{j=1}^{k+1}\int_{\mathbb{R}^d}v\otimes\cdots\otimes v\otimes \overbrace{(\nabla_x V+\nabla_x V^{\ext})}^{j\text{\textsuperscript{th} position}}\otimes v\otimes\cdots\otimes vg\dd v \\
         & -\int_{\mathbb{T}^d}v^{\otimes k}\otimes v(\nabla_x(V^{\ext}_1-V^{\ext}_2)\cdot\nabla_vf)\dd v                                                                                            \\
         & +\partial_t\rho\nabla_x\partial_t^{k-1}(V^{\ext}_1-V^{\ext}_2)+\partial_tR_{k-1},
    \end{aligned}
\]
which satisfies \(R_k|_{t=0}=0\) and
\[
    \partial_t^{k+1}J(t, x)=\rho\nabla_x\partial_t^{k}(V^{\ext}_1 - V^{\ext}_2)(t, x) \pm\nabla_x^{k+1}:K^{k+1}(t, x)+R_k(t, x).
\]
Thus, proceeding as in the initialization, for \(\varphi\in C^{\infty}(\mathbb{T}^d)\), we have
\begin{align*}
    0 & =\partial_t^{k+2}\int_{\mathbb{T}^d}(\rho_1-\rho_2)(t, x)\varphi(x)\dd x                                                    \\
      & =-\int_{\mathbb{T}^d}\nabla_x\cdot\partial_t^{k+1}J(t, x)\varphi(x)\dd x                                                    \\
      & =\begin{aligned}[t]
              & \int_{\mathbb{T}^d}\rho(t, x)\nabla_x\partial_t^k(V^{\ext}_1-V^{\ext}_2)(t, x)\cdot\nabla_x\varphi(x)\dd x \\
              & \pm \int_{\mathbb{T}^d}\nabla_x^{k+1}: K^{k+1}(t, x)\cdot\nabla_x\varphi(x)\dd x                           \\
              & + \int_{\mathbb{T}^d}R_k(t, x)\cdot\nabla_x\varphi(x)\dd x.
         \end{aligned}
\end{align*}
Hence, at \(t=0\) and for \(\varphi(x)=\partial_t^k(V^{\ext}_1-V^{\ext}_2)(0, x)\), we have
\[
    0=\int_{\mathbb{T}^d}\rho(0, x)\abs*{\nabla_x\partial_t^k(V^{\ext}_1-V^{\ext}_2)}^2(0, x)\dd x.
\]
Therefore, since \(\rho\geqslant 0\) and \(\{\rho(0, x) = 0\}\) is negligible, \(\nabla_x\partial_t^k(V^{\ext}_1 - V^{\ext}_2)=0\). It remains to verify that \(\partial_t^{k+1-\ell}R_{\ell}|_{t=0}=0\) successively for \(\ell=1,\ldots,k\). We have \[
    \pm\partial_t^{k+1-\ell}R_{\ell}\colonequals\begin{aligned}[t]
         & \sum_{j=1}^{\ell+1}\int_{\mathbb{R}^d}v\otimes\cdots\otimes v\otimes \overbrace{(\nabla_x V+\nabla_x V^{\ext})}^{j\text{\textsuperscript{th} position}}\otimes v\otimes\cdots\otimes v\partial_t^{k+1-\ell}g\dd v \\
         & -\int_{\mathbb{T}^d}v^{\otimes \ell}\otimes v\partial_t^{k+1-\ell}(\nabla_x(V^{\ext}_1-V^{\ext}_2)\cdot\nabla_vf)\dd v                                                                                            \\
         & +\partial_t^{k+1-\ell}(\partial_t\rho\nabla_x\partial_t^{\ell-1}(V^{\ext}_1-V^{\ext}_2))+\partial_t^{k+1-(\ell-1)}R_{\ell-1}.
    \end{aligned}
\]
We analyze the terms from last to first:
\begin{itemize}
    \item \(\partial_t^{k+1-(\ell-1)}R_{\ell-1}|_{t=0}=0\) by the step \(\ell - 1\),

    \item \(\partial_t^{k+1-\ell}(\partial_t\rho\nabla_x\partial_t^{\ell-1}(V^{\ext}_1-V^{\ext}_2))|_{t=0}=0\) by the induction hypothesis, since \(\nabla_x\partial_t^k(V^{\ext}_1-V^{\ext}_2)\) is the highest order derivative,

    \item likewise, \(\int_{\mathbb{T}^d}v^{\otimes \ell}\otimes v\partial_t^{k+1-\ell}(\nabla_x(V^{\ext}_1-V^{\ext}_2)\cdot\nabla_vf)\dd v|_{t=0}=0\),

    \item finally, for the first term, by replacing time derivatives of \(g\) using equation \eqref{eq_proofITA_VP-diff}, we obtain only product terms involving \(g\) or \(\nabla_x\partial_t^{\ell}(V^{\ext}_1-V^{\ext}_2)\) whose time derivative order is at most \(k\). Thus, by the induction hypothesis and \(g|_{t=0}=0\), the first term is also zero at \(t=0\).
\end{itemize}
Finally, \(\partial_t^{k+1-\ell}R_{\ell}|_{t=0}=0\), which yields the induction step: this concludes the proof.

\section{Proof of \texorpdfstring{\Cref{thm_surjectivity}}{Theorem \ref{thm_surjectivity}} and \texorpdfstring{\Cref{prop_illposedness}}{Proposition \ref{prop_illposedness}}}

\subsection{Proof of \texorpdfstring{\Cref{thm_surjectivity}}{Theorem \ref{thm_surjectivity}}}

We proceed by an iteration scheme to solve \eqref{eq_sketchS_VP-surjectivity}, defining a sequence of approximations \((f^{n}, V^{\ext, n})_{n\in\mathbb{N}}\) with \(f^{0}\) the initial datum as follows
\begin{equation}\label{eq_proofS_VP-iterated}
    \begin{cases}
        \partial_tf^{n+1} + v\cdot\nabla_xf^{n+1} = - \nabla_xV\cdot\nabla_vf^{n}- \nabla_xV^{\ext,n}\cdot\nabla_vf^{n}, \\
        \nabla_x\cdot(\varrho\nabla_xV^{\ext,n})=\nabla_x^2:K[f^n]-\partial_t^2\varrho-\nabla_x\cdot(\varrho\nabla_xV),  \\
        -\Delta_xV=\varrho-1,                                                                                            \\
        f^{n+1}|_{t=0}=f^{0}.
    \end{cases}
\end{equation}
At each step \(V^{\ext, n}\) existence is given through the elliptic equation thanks to the coercivity of \(\varrho\). We also consider the difference \(g^{n+1}=f^{n+1}-f^{n}\) satisfying
\begin{equation}\label{eq_proofS_VP-iterated-diff}
    \begin{cases}
        \partial_tg^{n+1} + v\cdot\nabla_xg^{n+1}=\begin{aligned}[t]
                                                       & -\nabla_xV\cdot\nabla_vg^{n} - \nabla_xV^{\ext,n}\cdot\nabla_vg^{n} \\
                                                       & -\nabla_x(V^{\ext,n}-V^{\ext,n-1})\cdot\nabla_vf^{n-1},
                                                  \end{aligned} \\
        \nabla_x\cdot(\varrho\nabla_x(V^{\ext,n}-V^{\ext,n-1}))=\nabla_x^2:K[g^{n}],                                     \\
        -\Delta_xV=\varrho-1,                                                                                            \\
        g^{n+1}|_{t=0}=0.
    \end{cases}
\end{equation}
Thus, at each step, we construct \(g^n\) then \(f^n\) as
\[
    g^n(t, x, v)=\int_0^t \partial_\tau[g^n(\tau, x - ( t - \tau) v, v)]\dd\tau, \qquad f^n=f^{0}+\sum_{j=1}^{n}g^j.
\]
To prove the convergence of the scheme, we introduce the following norm à la Caflisch \cite{MR1027897}, for \(M>2+d/2\), \(s>1+d/2\), \(\lambda_0'>0\), \(c>\lambda_0'\) and \(\gamma\in(0,1)\),
\[
    \normop{u}\colonequals\sup_{\substack{0\leqslant\lambda<\lambda_0' \\ t<\frac{\lambda_0'-\lambda}{c}}}\left\{\norm{u(t)}_{M, \lambda, s} + (\lambda_0'-\lambda - ct)^{\gamma}\left(\norm{\nabla_xu(t)}_{M, \lambda, s} + \norm{\nabla_vu(t)}_{M, \lambda, s}\right) \right\}.
\]
Choose \(0<\lambda_0'\leqslant\lambda_0\) and \(R>0\) such that \(\normop{g^1}\leqslant R/2\). To do so, we need to estimate \(g^1\), which is done by proceeding similarly as in the induction step for the convergence of the scheme, thus, we only write the details for the induction step.

We start by estimating the free transport semigroup.
\begin{claim}\label{claim_proofS_semigroup}
    For \(g(t, x, v)=f(t, x + tv, v)\), we have
    \[
        \norm{g}_{M, \lambda, s}\leqslant (1+\abs{t})^s\norm{f}_{M, \lambda(1 + \abs{t}), s}.
    \]
\end{claim}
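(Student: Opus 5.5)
The approach is to pass to Fourier variables, where free transport acts as a shear in \((k,\eta)\), and then compare the weights \(A^{(\lambda,s)}\) before and after the shear.

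\textbf{Step 1: the shear formula.} Since \(g(t,x,v)=f(t,x+tv,v)\), the definition of the unitary Fourier transform and the substitution \(y=x+tv\) (for fixed \(v\), a measure-preserving translation on \(\mathbb{T}^d\)) give \(e^{-ix\cdot k-iv\cdot\eta}=e^{-iy\cdot k-iv\cdot(\eta-tk)}\). Hence
\[
    \hat g_k(t,\eta)=\hat f_k(t,\eta-tk).
\]

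\textbf{Step 2: velocity moments.} The moments commute with this change of variables, because \(v^\alpha g(t,x,v)=(v^\alpha f)(t,x+tv,v)\). Applying Step 1 to \(v^\alpha f\) gives
\[
    D_\eta^\alpha\hat g_k(t,\eta)=(D_\eta^\alpha\hat f_k)(t,\eta-tk).
\]
Equivalently, one can differentiate the shear formula directly in \(\eta\), since the shift \(\eta\mapsto\eta-tk\) does not depend on \(\eta\).

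\textbf{Step 3: weight comparison.} This is the key point. Write \(\eta=(\eta-tk)+tk\). By the triangle inequality for the Euclidean norm of \((1,k,\eta)\in\mathbb{R}^{2d+1}\),
\[
    \inp{k}{\eta}\leqslant\inp{k}{\eta-tk}+\abs{t}\abs{k}\leqslant(1+\abs{t})\inp{k}{\eta-tk}.
\]
It follows that
\[
    A^{(\lambda,s)}_k(\eta)\leqslant(1+\abs{t})^s\,A^{(\lambda(1+\abs{t}),s)}_k(\eta-tk),
\]
since \(s,\lambda\geqslant0\) make both factors \(\inp{}{}^s\) and \(e^{\lambda\inp{}{}}\) monotone.

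\textbf{Step 4: conclusion.} Square and integrate in \(\eta\). The change of variables \(\eta\mapsto\eta+tk\) for each fixed \(k\) is measure preserving on \(\mathbb{R}^d_\eta\). Summing over \(k\in\mathbb{Z}^d\) then gives
\[
    \norm{v^\alpha g}_{\lambda,s}\leqslant(1+\abs{t})^s\norm{v^\alpha f}_{\lambda(1+\abs{t}),s}.
\]
Summing the squares over \(\abs{\alpha}\leqslant M\) yields the claim.

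No step is a genuine obstacle. The only thing to get right is the sign and direction of the shear, namely \(\eta-tk\) rather than \(\eta+tk\). This is harmless anyway, because the weight bound in Step 3 holds with \(\abs{t}\) and is therefore symmetric in \(t\).
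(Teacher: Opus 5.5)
Your proof is correct and follows the same route as the paper: the shear identity \(\hat g_k(\eta)=\hat f_k(\eta-tk)\), a measure-preserving change of variables in \(\eta\), and the weight bound \(\inp{k}{\eta}\leqslant(1+\abs{t})\inp{k}{\eta-tk}\). Your triangle-inequality argument for that weight bound makes explicit a step the paper states without proof.
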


\begin{proof}
    By the change of variables \(\xi=\eta-kt\), we have
    \begin{align*}
        \norm{g}_{M,\lambda,s}^2
         & = \sum_{|\alpha|\leqslant M}\sum_{k\in\mathbb{Z}^d}\int_{\mathbb{R}^d} \abs{D_\eta^\alpha \hat f_k(\eta-kt)}^2 \inp{k}{\eta}^{2s} e^{2\lambda \inp{k}{\eta}}\dd\eta \\
         & = \sum_{|\alpha|\leqslant M}\sum_{k\in\mathbb{Z}^d}\int_{\mathbb{R}^d} \abs{D_\xi^\alpha \hat f_k(\xi)}^2 \inp{k}{\xi+kt}^{2s} e^{2\lambda \inp{k}{\xi+kt}}\dd\xi,
    \end{align*}
    and get the bound by using \(\inp{k}{\xi+kt}\leqslant (1+\abs{t})\inp{k}{\xi}\).
\end{proof}

We now estimate \(\normop{g^{n+1}}\). Let \(0\leqslant\lambda<\lambda_0'\). From \eqref{eq_proofS_VP-iterated-diff}, we have
\begin{align*}
    \norm{g^{n+1}(t)}_{M, \lambda, s} & \leqslant \begin{aligned}[t]
                                                      \int_0^t & \norm{\nabla_xV\cdot\nabla_vg^{n}(\tau, \cdot_x - (t - \tau)\cdot_v, \cdot_v)}_{M, \lambda, s}                                     \\
                                                               & + \norm{\nabla_xV^{\ext,n}\cdot\nabla_vg^{n}(\tau, \cdot_x - (t - \tau)\cdot_v, \cdot_v))}_{M, \lambda, s}                         \\
                                                               & + \norm{\nabla_x(V^{\ext,n}-V^{\ext,n-1})\cdot\nabla_vf^{n-1}(\tau, \cdot_x - (t - \tau)\cdot_v, \cdot_v))}_{M, \lambda, s}\dd\tau
                                                  \end{aligned} \\
                                      & \equalscolon \int_0^t \I + \II + \III\dd\tau.
\end{align*}

By \Cref{claim_proofS_semigroup}, Cauchy-Schwarz inequality and Sobolev injection for \(s>1+d/2\), we get
\begin{align*}
    \abs{\I}  & \lesssim_{d, s, \varrho, T}\norm*{\nabla_vg^{n}(\tau)}_{M, \lambda(1 + t - \tau), s},                                                                                      \\
    \abs{\II} & \lesssim_{d, s, \varrho, T} \begin{aligned}[t]
                                                 & \norm*{V^{\ext,n}(\tau)}_{\lambda(1 + t - \tau), s}\norm*{\nabla_vg^{n}(\tau)}_{M, \lambda(1 + t - \tau), s}    \\
                                                 & + \norm*{\nabla_xV^{\ext,n}(\tau)}_{\lambda(1 + t - \tau), s}\norm*{g^{n}(\tau)}_{M, \lambda(1 + t - \tau), s},
                                            \end{aligned}
\end{align*}
and
\[
    \abs{\III}  \lesssim_{d, s, \varrho, T} \begin{aligned}[t]
         & \norm*{(V^{\ext, n}-V^{\ext,n-1})(\tau)}_{\lambda(1 + t - \tau), s}\norm*{\nabla_vf^{n-1}(\tau)}_{M, \lambda(1 + t - \tau), s}    \\
         & + \norm*{\nabla_x(V^{\ext, n}-V^{\ext,n-1})(\tau)}_{\lambda(1 + t - \tau), s}\norm*{f^{n-1}(\tau)}_{M, \lambda(1 + t - \tau), s}.
    \end{aligned}
\]
Moreover, by taking \(\lambda_0'>0\) smaller if necessary---so that \Cref{prop_elliptic_analytic_regularity} applies to \eqref{eq_proofS_VP-iterated} and \eqref{eq_proofS_VP-iterated-diff}, we have
\[
    \abs{\II} \lesssim_{d, s, \varrho, T} \begin{aligned}[t]
         & \bigl(\begin{aligned}[t]
                      & \norm*{f^{n}(\tau)}_{M, \lambda(1 + t - \tau), s} + \norm{\partial_t^2\varrho(\tau)}_{M, \lambda(1 + t - \tau), s}  \\
                      & + \norm{\varrho(\tau)}_{M, \lambda(1 + t - \tau), s}\bigr)\norm*{\nabla_vg^{n}(\tau)}_{M, \lambda(1 + t - \tau), s}
                 \end{aligned}                  \\
         & + \bigl(\begin{aligned}[t]
                        & \norm*{\nabla_xf^{n}(\tau)}_{M, \lambda(1 + t - \tau), s} + \norm{\nabla_x\partial_t^2\varrho(\tau)}_{M, \lambda(1 + t - \tau), s} \\
                        & + \norm{\nabla_x\varrho(\tau)}_{M, \lambda(1 + t - \tau), s}\bigr)\norm*{g^{n}(\tau)}_{M, \lambda(1 + t - \tau), s},
                   \end{aligned}
    \end{aligned}
\]
and
\[
    \abs{\III}  \lesssim_{d, s, \varrho, T} \begin{aligned}[t]
         & \norm*{g^{n}(\tau)}_{M, \lambda(1 + t - \tau), s}\norm*{\nabla_vf^{n-1}(\tau)}_{M, \lambda(1 + t - \tau), s}   & \\
         & +\norm*{\nabla_xg^{n}(\tau)}_{M, \lambda(1 + t - \tau), s}\norm*{f^{n-1}(\tau)}_{M, \lambda(1 + t - \tau), s}.
    \end{aligned}
\]
Hence, we obtain
\begin{align*}
    \norm*{g^{n+1}(t)}_{M, \lambda, s} & \lesssim_{d, s, \varrho, T} (\normop{f^{n-1}} + \normop{f^{n}} + 1)\normop{g^{n}}\int_0^t(\lambda_0' - \lambda - \lambda t - (c - \lambda)\tau)^{-\gamma}\dd\tau \\
                                       & \lesssim_{d, s, \varrho, T} (\normop{f^{n-1}} + \normop{f^{n}} + 1)\normop{g^{n}}\frac{(\lambda_0' - \lambda - \lambda t)^{1-\gamma}}{(1 - \gamma)(c - \lambda)} \\
                                       & \lesssim_{d, s, \varrho, \lambda_0', \gamma, T} \frac{1}{c - \lambda}(\normop{f^{n-1}} + \normop{f^{n}} + 1)\normop{g^{n}}.
\end{align*}
Similarly, we estimate \(\norm*{\nabla_xg^{n+1}(t)}_{M, \lambda, s}\) by tracking the changes made to the analyticity radius. First, we use the fact that \([\nabla_x, v\cdot\nabla_x]=0\) to apply \Cref{claim_proofS_semigroup} which replaces \(\lambda\) by \(\lambda(1+t - \tau)\). Then, we absorb the extra derivative in \(x\) with \eqref{eq_jap_exp}: \(\lambda(1+t - \tau)\) becomes \(\frac{\lambda_0'-c\tau}{2} + \frac{\lambda(1 + t - \tau)}{2}\). Thus, we get
\begin{align*}
    \norm*{\nabla_xg^{n+1}(t)}_{M, \lambda, s} & \lesssim_{d, s, \varrho, \gamma, T} (\normop{f^{n-1}} + \normop{f^{n}} + 1)\normop{g^{n}}\int_0^t(\lambda_0' - \lambda - \lambda t - (c - \lambda)\tau)^{-1-\gamma}\dd\tau \\
                                               & \lesssim_{d, s, \varrho, \gamma, T} \frac{(\lambda_0' - \lambda - c t)^{-\gamma}}{c - \lambda}(\normop{f^{n-1}} + \normop{f^{n}} + 1)\normop{g^{n}}.
\end{align*}
It only remains to estimate \(\norm{\nabla_vg^{n+1}(t)}_{M, \lambda, s}\). We first absorb the derivative in \(v\), \(\lambda\) is replaced by \(\frac{\lambda_0' + \lambda + ct - 2c\tau}{2(1 + t - \tau)}\), then, applying \Cref{claim_proofS_semigroup}, it becomes \(\frac{\lambda_0' + \lambda + ct - 2c\tau}{2}\) to yield the following estimate
\begin{align*}
    \norm*{\nabla_vg^{n+1}(t)}_{M, \lambda, s} & \lesssim_{d, s, \varrho, \gamma, T} (\normop{f^{n-1}} + \normop{f^{n}} + 1)\normop{g^{n}}\int_0^t\frac{(\lambda_0' - \lambda - ct)^{-\gamma}(1 + t - \tau)}{\lambda_0' + \lambda + ct - 2\lambda(1 + t - \tau)- 2c\tau}\dd\tau \\
                                               & \lesssim_{d, s, \varrho, \gamma, T} \frac{(\lambda_0' - \lambda - ct)^{-\gamma}}{c - \lambda}(\normop{f^{n-1}} + \normop{f^{n}} + 1)\normop{g^{n}}.
\end{align*}
Finally, by combining the three estimates and induction, \(\normop{f^{n-1}}, \normop{f^{n}}\leqslant R\), we get
\[
    \normop{g^{n+1}}\lesssim_{d, s, \varrho, \lambda_0', \gamma, T} \frac{1}{c - \lambda_0'}(2R + 1)\normop{g^{n}} = C(d, s, \varrho, \lambda_0', \gamma, T, R, c)\normop{g^{n}}.
\]
Therefore, we can choose \(c>\lambda_0'\) large enough so that \(C(d, s, \varrho, \lambda_0', \gamma, T, R, c)\leqslant 1/2\), yielding the contraction \(\normop{g^{n+1}}\leqslant\normop{g^n}/2\leqslant R/2^{n+2}\) and
\[
    \normop{f^{n+1}}\leqslant\sum_{k=1}^{n+1}\normop{g^k}\leqslant R.
\]
This completes the induction. Since \(\normop{\cdot}\) generates a Banach space and \((g^n)_{n\in\mathbb{N}^*}\) is normally convergent in this space, the sequence \((f^{n})_{n\in\mathbb{N}}\) converges to some \(f\) in the norm \(\normop{\cdot}\), solving \eqref{eq_sketchS_VP-surjectivity}, and, we get the existence of \(V^{\ext}\) through the elliptic equation
\[
    \nabla_x\cdot(\varrho\nabla_xV^{\ext})=\nabla_x^2:K[f]-\partial_t^2\varrho-\nabla_x\cdot(\varrho\nabla_xV).
\]
Finally, we check that the spatial density of \(f\), denoted \(\rho[f]\), is indeed \(\varrho\). As before, \(\rho[f]\) solves the equation \(\partial_t^2\rho[f]+\nabla_x\cdot(\rho[f]\nabla_x(V^{\ext} + V))=\nabla_x^2:K[f]\) with initial datum \(\rho[f]|_{t=0}=\varrho|_{t=0}\) and \(\partial_t\rho[f]|_{t=0}=\partial_t\varrho|_{t=0}\), thanks to the conservation equation. Since the linear operator \(\rho\mapsto \nabla_x\cdot(\rho\nabla_x(V^{\ext} + V))\) satisfies
\[
    \norm{\nabla_x\cdot(\rho\nabla_x(V^{\ext} + V))}_{\lambda, s}\lesssim_{d, s, \varrho}\frac{\norm{\rho}_{\lambda', s}}{\lambda' - \lambda}\qquad\text{for }\lambda<\lambda'<\lambda_0,
\]
by uniqueness of analytic solutions to this equation---see \cite[Theorem~A]{MR322321}---we have \(\rho[f]=\varrho\), which concludes the proof.

\subsection{Proof of \texorpdfstring{\Cref{prop_illposedness}}{Proposition \ref{prop_illposedness}}}

We start by using the result from \cite[Theorem~1.2]{MR3509003}: there exists an initial datum \(f^0=f^0(v)\) such that \((1, f^0)\in\mathfrak{D}\) and the following holds. For all \(m,s\in \mathbb{N}\), \(\alpha\in (0,1]\), and \(k\in \mathbb{N}\), there are families of solutions \((f_{\varepsilon})_{\varepsilon>0}\) of
\[
    \begin{cases}
        \partial_t f_{\varepsilon} + v \cdot \nabla_x f_{\varepsilon} + \nabla_x V^{\ext}_{\varepsilon} \cdot \nabla_v f_{\varepsilon} = 0, \\
        \int_{\mathbb{R}^d} f_{\varepsilon} \dd v = 1,
    \end{cases}
\]
times \(t_\varepsilon = O(\varepsilon\abs{\ln \varepsilon})\), and \((x_0,v_0)\in \mathbb{T}^d\times \mathbb{R}^d\), such that
\[
    \lim_{\varepsilon\to 0}\frac{\norm{f_{\varepsilon} - f^0}_{L^2([0,t_{\varepsilon}]\times \Omega_{\varepsilon})}}{\norm{\jap{v}^m (f_{\varepsilon}|_{t=0} - f^0)}_{H^s(\mathbb{T}^d\times \mathbb{R}^d)}^\alpha} = +\infty,
\]
with \(\Omega_{\varepsilon} = B(x_0,\varepsilon^k)\times B(v_0,\varepsilon^k)\).

Now, we estimate \(\norm{f_{\varepsilon}(t) - f^0}_{L^2_{x, v}}\). We have
\begin{align*}
    \frac{1}{2}\partial_t\norm{f_{\varepsilon}(t) - f^0}_{L^2_{x, v}}^2 & = -\int_{\mathbb{T}^d\times \mathbb{R}^d}(f_{\varepsilon}(t) - f^0)(v\cdot\nabla_x(f_{\varepsilon}(t) - f^0) + \nabla_x V^{\ext}_{\varepsilon}(t)\cdot\nabla_v(f^0 + f_{\varepsilon}(t) - f^0))\dd x\dd v                                                                                 \\
                                                                        & = \begin{aligned}[t]
                                                                                 & -\frac{1}{2}\int_{\mathbb{T}^d\times \mathbb{R}^d}\nabla_x\cdot v\abs{f_{\varepsilon}(t) - f^0}^2\dd x\dd v - \int_{\mathbb{T}^d\times \mathbb{R}^d}(f_{\varepsilon}(t) - f^0)\nabla_x V^{\ext}_{\varepsilon}(t)\cdot\nabla_vf^0\dd x\dd v \\
                                                                                 & - \frac{1}{2}\int_{\mathbb{T}^d\times \mathbb{R}^d}\nabla_v\cdot\nabla_x V^{\ext}_{\varepsilon}(t)\abs{f_{\varepsilon}(t) - f^0}^2\dd x\dd v
                                                                            \end{aligned} \\
                                                                        & \leqslant \norm{\nabla_x V^{\ext}_{\varepsilon}(t)}_{L^2_{x}}\norm{\nabla_vf^0}_{L^2_{v}}\norm{f_{\varepsilon}(t) - f^0}_{L^2_{x, v}},
\end{align*}
So, by integrating in time, we get
\begin{align*}
    \norm{f_{\varepsilon}(t) - f^0}_{L^2_{x, v}} & \leqslant \norm{f_{\varepsilon}|_{t=0} - f^0}_{L^2_{x, v}} + \int_0^t\norm{\nabla_x V^{\ext}_{\varepsilon}(\tau)}_{L^2_{x}}\norm{\nabla_vf^0}_{L^2_{v}}\dd\tau \\
                                                 & \leqslant \norm{f_{\varepsilon}|_{t=0} - f^0}_{L^2_{x, v}} + \norm{\nabla_vf^0}_{L^2_{v}}\norm{\nabla_x V^{\ext}_{\varepsilon}}_{L^1_{t} L^2_x}.
\end{align*}
Then, taking the \(L^2\)-norm in time and dividing by \(\norm{\jap{v}^m (f_{\varepsilon}|_{t=0} - f^0)}_{H^s(\mathbb{T}^d\times \mathbb{R}^d)}^\alpha\), we have
\begin{align*}
    \frac{\norm{f_{\varepsilon} - f^0}_{L^2([0,t_{\varepsilon}]\times\Omega_{\varepsilon})}}{\norm{\jap{v}^m (f_{\varepsilon}|_{t=0} - f^0)}_{H^s(\mathbb{T}^d\times \mathbb{R}^d)}^\alpha} & \leqslant \frac{\norm{f_{\varepsilon} - f^0}_{L^2([0,t_{\varepsilon}]\times\mathbb{T}^d\times\mathbb{R}^d)}}{\norm{\jap{v}^m (f_{\varepsilon}|_{t=0} - f^0)}_{H^s(\mathbb{T}^d\times \mathbb{R}^d)}^\alpha}                                    \\
                                                                                                                                                                                            & \lesssim_{f^0} \sqrt{t_{\varepsilon}} + \frac{\norm{\nabla_x V^{\ext}_{\varepsilon}}_{L^2_{t_{\varepsilon}}(L^1_{t}L^2_{x})}}{\norm{\jap{v}^m (f_{\varepsilon}|_{t=0} - f^0)}_{H^s(\mathbb{T}^d\times \mathbb{R}^d)}^\alpha}                   \\
                                                                                                                                                                                            & \lesssim_{f^0} \sqrt{t_{\varepsilon}} + \frac{\sqrt{t_{\varepsilon}}\norm{\nabla_x V^{\ext}_{\varepsilon} - 0}_{L^1_{t_{\varepsilon}}L^2_{x}}}{\norm{\jap{v}^m (f_{\varepsilon}|_{t=0} - f^0)}_{H^s(\mathbb{T}^d\times \mathbb{R}^d)}^\alpha}.
\end{align*}
Since \(f^0\) only depends on \(v\), its associated exterior potential is zero. Finally, letting \(\varepsilon\to 0\) yields the desired result.

\begin{appendices}
    \section{Propagation of an infinite number of moments}\label{appendix_moments}

    The uniform moment bounds hypothesis \eqref{hyp_thmITA_moments_bound} in \Cref{thm_injectivity_time_analytic} on the solution of the Vlasov--Poisson equation can be guaranteed by assuming the same condition on the initial datum. More precisely, it follows from a result in \cite[Proposition~3.2]{MR3842911} that we recall here for convenience and adapted to our setting.

    Define the weighted Sobolev norms, for \(n\in\mathbb{N}\), \(r\in\mathbb{R}\),
    \[
        \norm{f}_{\mathcal{H}^n_r}^2\colonequals\sum_{\abs{\alpha}+\abs{\beta}\leqslant n}\int_{\mathbb{T}^d}\int_{\mathbb{R}^d}\jap{v}^{2r}\abs{\partial_x^\alpha\partial_v^\beta f(x,v)}^2\dd v\dd x.
    \]
    \begin{lemma}
        Let \(n > d + 1\) and \(r > \frac{d}{2}\). Assume that \(f^0 \in \mathcal{H}^n_r\). Then, there exists \(T > 0\) such that there is a unique solution \(f\) with initial datum \(f^0\) to \eqref{eq_vlasov-poisson} such that \(f \in C([0,T];~\mathcal{H}^n_r)\).
    \end{lemma}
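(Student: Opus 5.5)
This is a local well-posedness result in weighted Sobolev spaces. I would prove it by a classical energy method: derive a priori estimates in \(\mathcal{H}^n_r\), build solutions from a regularized iteration scheme, and prove uniqueness by an \(L^2_r\) stability estimate.

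\textbf{A priori estimate.} Apply \(\partial_x^\alpha\partial_v^\beta\) with \(\abs{\alpha}+\abs{\beta}\leqslant n\) to \eqref{eq_vlasov-poisson}, multiply by \(\jap{v}^{2r}\partial_x^\alpha\partial_v^\beta f\), integrate, and sum. Set \(E\colonequals\nabla_x(V+V^{\ext})\).

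The top-order transport terms \(v\cdot\nabla_x\) and \(E\cdot\nabla_v\) are skew up to the weight. The weight contributes \(E\cdot\nabla_v\jap{v}^{2r}\lesssim\abs{E}\jap{v}^{2r-1}\), which is bounded by \(\norm{E}_{L^\infty}\norm{f}_{\mathcal{H}^n_r}^2\).

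The commutator \([\partial_v^\beta, v\cdot\nabla_x]\) produces \(\partial_x\partial_v^{\beta-e_j}f\). This has the same total order, so it is harmless.

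The commutators with \(E\cdot\nabla_v\) are of the form \(\partial_x^{\alpha'}E\,\partial_x^{\alpha-\alpha'}\partial_v^{\beta}\nabla_v f\) with \(\abs{\alpha'}\geqslant1\).
\begin{itemize}
\item When \(\abs{\alpha'}\leqslant n/2\), put \(\partial^{\alpha'}E\) in \(L^\infty_x\); this uses \(n>d/2+1\).
\item When \(\abs{\alpha'}\) is large, put \(\partial^{\alpha'}E\) in \(L^2_x\) and the \(f\) factor in \(L^\infty_x\) via Sobolev in \(x\). The factor \(\jap{v}^r\) stays with \(f\).
\end{itemize}

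For the self-consistent field, \(-\Delta V=\rho-1\) gives \(\norm{\nabla V}_{H^{n+1}_x}\lesssim\norm{\rho}_{H^n_x}\). The key point is to bound \(\norm{\partial_x^\alpha\rho}_{L^2_x}\lesssim\norm{\jap{v}^r\partial_x^\alpha f}_{L^2_{x,v}}\) by Cauchy--Schwarz in \(v\), which needs \(r>d/2\). This is exactly the role of the weight hypothesis.

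The exterior field \(V^{\ext}\) is taken smooth, e.g. \(C([0,T];H^{n+1})\) or better, as given data. Altogether this yields
\[
\tfrac{\dd}{\dd t}\norm{f}_{\mathcal{H}^n_r}^2\leqslant C\bigl(1+\norm{f}_{\mathcal{H}^n_r}\bigr)\norm{f}_{\mathcal{H}^n_r}^2 ,
\]
hence a uniform bound on some \([0,T]\) with \(T\) depending on \(\norm{f^0}_{\mathcal{H}^n_r}\). The threshold \(n>d+1\) ensures \(E\in W^{1,\infty}\) and that the Sobolev embeddings in \((x,v)\) used for the mixed commutator terms hold.

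\textbf{Existence.} Run the linear iteration \(\partial_tf^{k+1}+v\cdot\nabla_xf^{k+1}+E^k\cdot\nabla_vf^{k+1}=0\), with \(E^k\) computed from \(\rho[f^k]\), solved by characteristics. The a priori estimate applies uniformly in \(k\) and gives a bound in \(L^\infty_T\mathcal{H}^n_r\). Contraction holds in the lower norm \(L^2_r\) (the \(\mathcal{H}^0_r\) norm): the difference term \((E^k-E^{k-1})\cdot\nabla_vf^k\) is controlled by \(\norm{\rho^k-\rho^{k-1}}_{L^2}\norm{\jap{v}^r\nabla_vf^k}_{L^2_vL^\infty_x}\), which is finite because \(n-1>d/2\).

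Pass to the limit in \(L^2_r\). Interpolation with the uniform high bound gives convergence in \(\mathcal{H}^{n'}_r\) for \(n'<n\). Weak-* compactness gives \(f\in L^\infty\mathcal{H}^n_r\). Continuity in time, \(f\in C([0,T];\mathcal{H}^n_r)\), follows from a Bona--Smith argument, or from weak continuity together with continuity of the norm obtained from the energy identity.

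\textbf{Uniqueness.} Apply the same \(L^2_r\) difference estimate to two solutions and conclude by Grönwall.

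The main obstacle is the top-order commutator bookkeeping in the weighted norm, in particular closing the estimate without loss in \(v\) given that \(\rho\) requires \(r>d/2\). I would handle it by keeping every \(v\)-weight attached to \(f\) and distributing \(x\)-derivatives through the Kato--Ponce/Moser inequality on \(\mathbb{T}^d\). The detailed statement is \cite[Proposition~3.2]{MR3842911}, whose proof I would follow.
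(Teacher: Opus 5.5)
Your proposal is correct and takes essentially the paper's route: the paper gives no argument of its own for this lemma and simply imports it from \cite[Proposition~3.2]{MR3842911}. Your sketch is the standard energy-method proof behind that cited result: a weighted $\mathcal{H}^n_r$ estimate with $r>d/2$ controlling $\rho$ via Cauchy--Schwarz in $v$, an iteration contracting in $\mathcal{H}^0_r$ with interpolation, and $L^2_r$ uniqueness by Grönwall, with the exterior field assumed smooth as the paper implicitly does.
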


    It is not explicit in \cite{MR3842911} that the existence time \(T\) is independent of \(n\) and \(r\), which remains to be proved to get the propagation of an infinite number of moments.

    \begin{proposition}\label{prop_propagation_moments}
        Assume that \(f^0 \in \bigcap_{n, r}\mathcal{H}^n_r\). Then, there exists \(T > 0\) such that there is a unique solution \(f\) with initial datum \(f^0\) to \eqref{eq_vlasov-poisson} such that \(f \in C([0,T];~\bigcap_{n, r}\mathcal{H}^n_r)\). Moreover, we have the pointwise bound
        \[
            \forall k\in\mathbb{N},\qquad \abs{v}^{k+1}\abs{\nabla_x^k f(t,x,v)} \leqslant C_k(v),
        \]
        where each \(C_k(\cdot)\in L^1(\mathbb{R}^d_v)\) and the bounds hold uniformly in \((t,x)\).
    \end{proposition}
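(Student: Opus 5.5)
The strategy is to show that the local existence time in the preceding Lemma can be bounded below by a quantity depending only on a low-order norm, say $\norm{f^0}_{\mathcal{H}^{n_0}_{r_0}}$ with fixed $n_0>d+1$, $r_0>d/2$. Higher norms $\mathcal{H}^n_r$ are then propagated on that same interval by linear (Grönwall) estimates. Concretely, I will first fix $(n_0,r_0)$ and apply the Lemma to obtain $T_0>0$ and a solution $f\in C([0,T_0];\mathcal{H}^{n_0}_{r_0})$. From the bound on $\norm{f(t)}_{\mathcal{H}^{n_0}_{r_0}}$ and $r_0>d/2$, I get $\rho\in L^\infty_tH^{n_0}_x$. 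By Poisson's equation on the torus, $\nabla_xV\in L^\infty_t H^{n_0+1}_x\subset L^\infty_tW^{2,\infty}_x$ since $n_0>d+1$. Together with the given smooth exterior potential, the total force field $E=\nabla_x(V+V^{\ext})$ is then controlled in $W^{2,\infty}$ uniformly on $[0,T_0]$.

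Next, for arbitrary $(n,r)$ with $n\geq n_0$, $r\geq r_0$, I differentiate $\partial_x^\alpha\partial_v^\beta$ of the Vlasov equation, multiply by $\jap{v}^{2r}\partial_x^\alpha\partial_v^\beta f$ and integrate. The transport part $v\cdot\nabla_x+E\cdot\nabla_v$ is divergence free, so its contribution reduces to the weight commutator $E\cdot\nabla_v\jap{v}^{2r}\lesssim r\norm{E}_{L^\infty}\jap{v}^{2r}$. The remaining commutators are of two kinds. The first is $[\partial_v^\beta,v\cdot\nabla_x]$, which trades a $v$-derivative for an $x$-derivative at the same order, so it is bounded by $\norm{f}_{\mathcal{H}^n_r}^2$. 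The second consists of terms $\partial_x^{\gamma}E\cdot\nabla_v\partial_x^{\alpha-\gamma}\partial_v^\beta f$. For these I split into $|\gamma|\leq n/2$, where $E$ is taken in $L^\infty$, and $|\gamma|>n/2$, where $\partial_x^\gamma E$ is taken in $L^2_x$, using $\norm{\partial^\gamma_x\nabla V}_{L^2}\lesssim\norm{\rho}_{H^{|\gamma|-1}}\lesssim\norm{f}_{\mathcal{H}^{n}_{r_0}}$, and the other factor in $L^\infty_x$ via Sobolev at the lower level. This is a standard tame (Moser) estimate. It yields
\[
\frac{\dd}{\dd t}\norm{f}_{\mathcal{H}^n_r}^2\leq C_{n,r}\bigl(1+\norm{f}_{\mathcal{H}^{n_0}_{r_0}}+\norm{V^{\ext}}_{W^{n+1,\infty}}\bigr)\norm{f}_{\mathcal{H}^n_r}^2 .
\]
The key feature is that this inequality is \emph{linear} in the top norm, with coefficients depending only on the already-controlled low norm.

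The main obstacle I expect is exactly this tameness. I must make sure that the highest-order term $\partial_x^\alpha E\cdot\nabla_v\partial_v^\beta f$ with $|\alpha|=n$ does not require $\norm{f}_{\mathcal{H}^{n+1}}$. It does not: $\partial_x^{n}\nabla V$ costs only $\norm{\rho}_{H^{n-1}}$, so only $\nabla_vf$ in $L^\infty_xL^2_v$-type weighted norms at low order is needed, which is controlled by $\mathcal{H}^{n_0}_{r_0}$. I also need the velocity weight to be handled without loss, which holds because $\nabla_v\jap{v}^{2r}\lesssim\jap{v}^{2r-1}$. Grönwall's lemma then gives $f\in C([0,T_0];\mathcal{H}^n_r)$ for all $n,r$ on the common interval $[0,T_0]$. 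Uniqueness follows from the Lemma at level $(n_0,r_0)$.

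Finally, for the pointwise bound I fix $k$ and use the Sobolev embedding $H^{\sigma}(\mathbb{T}^d\times\mathbb{R}^d)\hookrightarrow L^\infty$ with $\sigma>d$, applied to $\jap{v}^{r}\nabla_x^kf$. This gives $\jap{v}^{r}\abs{\nabla_x^kf(t,x,v)}\lesssim\norm{f(t)}_{\mathcal{H}^{k+\sigma+1}_{r}}$, with the extra derivative absorbing the derivative of the weight. The right-hand side is uniform in $t\in[0,T_0]$. Choosing $r=k+1+d+1$, I get $\abs{v}^{k+1}\abs{\nabla_x^kf}\leq C\jap{v}^{-d-1}$, and $C_k(v)=C\jap{v}^{-d-1}\in L^1(\mathbb{R}^d_v)$, uniformly in $(t,x)$.
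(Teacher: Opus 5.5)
\textbf{Gap in the displayed estimate.} Your overall mechanism is the paper's: higher norms obey Grönwall inequalities that are linear in the top norm, with coefficients already bounded on the interval $[0,T_0]$ given by the Lemma. But your displayed tame estimate, whose coefficient depends only on $\norm{f}_{\mathcal{H}^{n_0}_{r_0}}$, does not follow from your splitting, and for large weights the energy method cannot give it.

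Since $E$ does not depend on $v$, in every term $\partial_x^\gamma E\cdot\nabla_v\partial_x^{\alpha-\gamma}\partial_v^\beta f$ the whole weight $\jap{v}^{r}$ has to sit on the $f$-factor. The two cases of your splitting both fail to close at the base level:
\begin{itemize}
\item For $\abs{\gamma}>n/2$ you pay $\norm{\partial_x^\gamma E}_{L^2}\lesssim\norm{\rho}_{H^{n-1}}$, which is top order, as you write yourself. The remaining factor is then needed in $L^\infty_xL^2_v(\jap{v}^{2r})$, i.e.\ with weight $r$ and roughly $n/2+1+d/2$ derivatives.
\item For $\abs{\gamma}\leqslant n/2$, bounding $\norm{\partial_x^\gamma E}_{L^\infty}$ requires $\rho\in H^{\sigma}$ with $\sigma>n/2+d/2-1$.
\end{itemize}
Neither quantity is controlled by $\mathcal{H}^{n_0}_{r_0}$ once $n$ is large.

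The weight problem cannot be interpolated away. Take fixed bumps $\psi,\phi$ and
\[
f=\psi(v-V_0)\pm N^{-n}\sin(Nx_1)\partial_{v_1}\psi(v-V_0)+N^{-n_0}\cos(Nx_1)\phi(v),
\]
with $\jap{V_0}=N^{a}$ and $(n-n_0)/r\leqslant a<(n-n_0-1)/r_0$. Such an $a$ exists when $n\geqslant n_0+2$ and $r$ is large compared with $r_0$. Then:
\begin{itemize}
\item the $\gamma=\alpha=ne_1$ commutator term is of size $N^{n-n_0-1}\jap{V_0}^{2r}$;
\item $\norm{f}_{\mathcal{H}^{n_0}_{r_0}}\lesssim N^{ar_0}$ and $\norm{f}_{\mathcal{H}^{n}_{r}}\lesssim N^{ar}$.
\end{itemize}
So no bound of the form $C(\norm{f}_{\mathcal{H}^{n_0}_{r_0}})\norm{f}_{\mathcal{H}^n_r}^2$ can dominate this term as $N\to\infty$.

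\textbf{Repair, which is the paper's route.} Assume $n\geqslant d+2$. When $\abs{\gamma}\leqslant n/2$, put $\partial_x^\gamma E$ in $L^\infty$. When $\abs{\gamma}>n/2$, put $\partial_x^\gamma E$ in $L^2_x$ and place the low-order $f$-factor, weight included, into the top norm by Sobolev in $x$. This gives
\[
\frac{\mathrm{d}}{\mathrm{d}t}\norm{f}_{\mathcal{H}^n_r}^2\lesssim_{n,r}\bigl(1+\norm{f}_{\mathcal{H}^{n-1}_{r_0}}+\norm{V^{\ext}}_{W^{n+1,\infty}}\bigr)\norm{f}_{\mathcal{H}^n_r}^2 .
\]
The coefficient involves one derivative less and only the base weight. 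Hence an induction on $n$, starting from $n_0$, closes on $[0,T_0]$ for every $r$ at once. The paper does exactly this: it bounds $\partial_t\norm{f}_{\mathcal{H}^{n+1}_{r'}}$ by $(C_n+\norm{f}_{\mathcal{H}^n_r})\norm{f}_{\mathcal{H}^{n+1}_{r'}}$ using $\norm{\nabla_xV}_{H^{n+1}}\lesssim\norm{\rho}_{H^n}$, and climbs one derivative at a time.

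Your pointwise-bound step is correct and more careful than the paper's. On the $2d$-dimensional phase space one needs $H^{\sigma}\hookrightarrow L^\infty$ with $\sigma>d$, not $d/2$. Your choice $r=k+d+2$ then gives the explicit bound $C_k(v)=C\jap{v}^{-d-1}$.
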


    \begin{proof}
        Let \(n, r>0\) large enough such that \Cref{prop_propagation_moments} applies. Take \(r'\geqslant r\). Let \(f^0\in\mathcal{H}^{n+1}_{r'}\subset\mathcal{H}^{n}_{r}\). Then, there exists \(T>0\) such that the Vlasov--Poisson equation \eqref{eq_vlasov-poisson} admits a unique solution \(f\in C([0,T];\mathcal{H}^{n}_{r})\) with initial datum \(f^0\). We proceed as before to obtain an a priori estimate on \(\norm{f(t)}_{\mathcal{H}^{n+1}_{r'}}\),
        \begin{align*}
            \partial_t\norm{f(t)}_{\mathcal{H}^{n+1}_{r'}} & \lesssim_d (1 + \norm{\nabla_xV(t)}_{H^{n+1}}+\norm{\nabla_xV^{\ext}(t)}_{H^{n+1}})\norm{f(t)}_{\mathcal{H}^{n+1}_{r'}} \\
                                                           & \lesssim_d (C_n + \norm{\rho(t)}_{H^{n}})\norm{f(t)}_{\mathcal{H}^{n+1}_{r'}}                                           \\
                                                           & \lesssim_{d, r} (C_n + \norm{f(t)}_{\mathcal{H}^{n}_{r}})\norm{f(t)}_{\mathcal{H}^{n+1}_{r'}},
        \end{align*}
        where we used elliptic regularity and Sobolev injection. By Grönwall's inequality, we deduce that
        \[
            \norm{f(t)}_{\mathcal{H}^{n+1}_{r'}}\leqslant\norm{f^0}_{\mathcal{H}^{n+1}_{r'}} e^{C_{d, r}\int_0^t(C_n+\norm{f(s)}_{\mathcal{H}^{n}_{r}})\dd s}\leqslant\norm{f^0}_{\mathcal{H}^{n+1}_{r'}}e^{C_{d, r}(C_n+\norm{f}_{L^{\infty}_T\mathcal{H}^{n}_{r}})T}<\infty.
        \]
        Hence, we propagate one additional derivative and the weight \(r'\) also up to time \(T\). By induction, that shows that the existence time \(T\) is independent of \(n\) and \(r\). Finally, we use the Sobolev injection \(H^{d/2+}\hookrightarrow L^{\infty}\) to get the pointwise bound.
    \end{proof}
\end{appendices}


\begin{thebibliography}{RPvL15}

    \bibitem[Bar20]{MR4093619}
    Aymeric Baradat.
    \newblock Nonlinear instability in {V}lasov type equations around rough
    velocity profiles.
    \newblock {\em Ann. Inst. H. Poincar\'e{} C Anal. Non Lin\'eaire},
    37(3):489--547, 2020.

    \bibitem[BD85]{MR794002}
    C.~Bardos and P.~Degond.
    \newblock Global existence for the {V}lasov-{P}oisson equation in {\(3\)} space
    variables with small initial data.
    \newblock {\em Ann. Inst. H. Poincar\'e{} Anal. Non Lin\'eaire}, 2(2):101--118,
    1985.

    \bibitem[Ben89]{MR1056129}
    Sa\"id Benachour.
    \newblock Analyticit\'e{} des solutions des \'equations de {V}lassov-{P}oisson.
    \newblock {\em Ann. Scuola Norm. Sup. Pisa Cl. Sci. (4)}, 16(1):83--104, 1989.

    \bibitem[BF68]{BERTRAND196868}
    P.~Bertrand and M.R. Feix.
    \newblock Non linear electron plasma oscillation: the “water bag model”.
    \newblock {\em Physics Letters A}, 28(1):68--69, 1968.

    \bibitem[BMM16]{MR3489904}
    Jacob Bedrossian, Nader Masmoudi, and Cl\'ement Mouhot.
    \newblock Landau damping: paraproducts and {G}evrey regularity.
    \newblock {\em Ann. PDE}, 2(1):Art. 4, 71, 2016.

    \bibitem[BN12]{MR3026566}
    Claude Bardos and Anne Nouri.
    \newblock A {V}lasov equation with {D}irac potential used in fusion plasmas.
    \newblock {\em J. Math. Phys.}, 53(11):115621, 16, 2012.

    \bibitem[BR91]{MR1126425}
    J\"urgen Batt and Gerhard Rein.
    \newblock Global classical solutions of the periodic {V}lasov-{P}oisson system
    in three dimensions.
    \newblock {\em C. R. Acad. Sci. Paris S\'er. I Math.}, 313(6):411--416, 1991.

    \bibitem[Bre89]{Brenier1989}
    Yann Brenier.
    \newblock A vlasov-poisson type formulation of the euler equations for perfect
    incompressible fluids.
    \newblock {\em Rapport de recherche INRIA}, 1989.

    \bibitem[Bre00]{MR1748352}
    Y.~Brenier.
    \newblock Convergence of the {V}lasov-{P}oisson system to the incompressible
        {E}uler equations.
    \newblock {\em Comm. Partial Differential Equations}, 25(3-4):737--754, 2000.

    \bibitem[Caf90]{MR1027897}
    Russel~E. Caflisch.
    \newblock A simplified version of the abstract {C}auchy-{K}owalewski theorem
    with weak singularities.
    \newblock {\em Bull. Amer. Math. Soc. (N.S.)}, 23(2):495--500, 1990.

    \bibitem[FLLS16]{PhysRevA.93.062510}
    S\o{}ren Fournais, Jonas Lampart, Mathieu Lewin, and Thomas~\O{}stergaard
    S\o{}rensen.
    \newblock Coulomb potentials and taylor expansions in time-dependent
    density-functional theory.
    \newblock {\em Phys. Rev. A}, 93:062510, 6 2016.

    \bibitem[HK64]{PhysRev.136.B864}
    P.~Hohenberg and W.~Kohn.
    \newblock Inhomogeneous electron gas.
    \newblock {\em Phys. Rev.}, 136:B864--B871, Nov 1964.

    \bibitem[HK19]{MR3842911}
    Daniel Han-Kwan.
    \newblock On propagation of higher space regularity for nonlinear {V}lasov
    equations.
    \newblock {\em Anal. PDE}, 12(1):189--244, 2019.

    \bibitem[HKH15]{MR3306612}
    Daniel Han-Kwan and Maxime Hauray.
    \newblock Stability issues in the quasineutral limit of the one-dimensional
    {V}lasov-{P}oisson equation.
    \newblock {\em Comm. Math. Phys.}, 334(2):1101--1152, 2015.

    \bibitem[HKN16]{MR3509003}
    Daniel Han-Kwan and Toan~T. Nguyen.
    \newblock Ill-posedness of the hydrostatic {E}uler and singular {V}lasov
    equations.
    \newblock {\em Arch. Ration. Mech. Anal.}, 221(3):1317--1344, 2016.

    \bibitem[HKR16]{MR3592362}
    Daniel Han-Kwan and Fr\'ed\'eric Rousset.
    \newblock Quasineutral limit for {V}lasov-{P}oisson with {P}enrose stable data.
    \newblock {\em Ann. Sci. \'Ec. Norm. Sup\'er. (4)}, 49(6):1445--1495, 2016.

    \bibitem[HP90]{PhysRevLett.64.3019}
    Gregory~W. Hammett and Francis~W. Perkins.
    \newblock Fluid moment models for landau damping with application to the
    ion-temperature-gradient instability.
    \newblock {\em Phys. Rev. Lett.}, 64:3019--3022, Jun 1990.

    \bibitem[KS65]{PhysRev.140.A1133}
    W.~Kohn and L.~J. Sham.
    \newblock Self-consistent equations including exchange and correlation effects.
    \newblock {\em Phys. Rev.}, 140:A1133--A1138, Nov 1965.

    \bibitem[LP91]{MR1115549}
    P.-L. Lions and B.~Perthame.
    \newblock Propagation of moments and regularity for the {\(3\)}-dimensional
    {V}lasov-{P}oisson system.
    \newblock {\em Invent. Math.}, 105(2):415--430, 1991.

    \bibitem[Man20]{Manfredi_2020}
    Giovanni Manfredi.
    \newblock Density functional theory for collisionless plasmas – equivalence
    of fluid and kinetic approaches.
    \newblock {\em Journal of Plasma Physics}, 86(2):825860201, 2020.

    \bibitem[MV11]{MR2863910}
    Cl\'ement Mouhot and C\'edric Villani.
    \newblock On {L}andau damping.
    \newblock {\em Acta Math.}, 207(1):29--201, 2011.

    \bibitem[Nir72]{MR322321}
    L.~Nirenberg.
    \newblock An abstract form of the nonlinear {C}auchy-{K}owalewski theorem.
    \newblock {\em J. Differential Geometry}, 6:561--576, 1972.

    \bibitem[Pen60]{10.1063/1.1706024}
    Oliver Penrose.
    \newblock Electrostatic instabilities of a uniform non‐maxwellian plasma.
    \newblock {\em The Physics of Fluids}, 3(2):258--265, 03 1960.

    \bibitem[Pfa92]{MR1165424}
    K.~Pfaffelmoser.
    \newblock Global classical solutions of the {V}lasov-{P}oisson system in three
    dimensions for general initial data.
    \newblock {\em J. Differential Equations}, 95(2):281--303, 1992.

    \bibitem[RG84]{PhysRevLett.52.997}
    Erich Runge and E.~K.~U. Gross.
    \newblock Density-functional theory for time-dependent systems.
    \newblock {\em Phys. Rev. Lett.}, 52:997--1000, 3 1984.

    \bibitem[RPvL15]{Ruggenthaler_2015}
    Michael Ruggenthaler, Markus Penz, and Robert van Leeuwen.
    \newblock Existence, uniqueness, and construction of the density-potential
    mapping in time-dependent density-functional theory.
    \newblock {\em Journal of Physics: Condensed Matter}, 27(20):203202, 4 2015.

    \bibitem[Sch91]{MR1132787}
    Jack Schaeffer.
    \newblock Global existence of smooth solutions to the {V}lasov-{P}oisson system
    in three dimensions.
    \newblock {\em Comm. Partial Differential Equations}, 16(8-9):1313--1335, 1991.

    \bibitem[Tay23]{MR4703940}
    Michael~E. Taylor.
    \newblock {\em Partial differential equations {I}. {B}asic theory}, volume 115
    of {\em Applied Mathematical Sciences}.
    \newblock Springer, Cham, third edition, 2023.

    \bibitem[VR21]{MR4266238}
    Renato Velozo~Ruiz.
    \newblock Gevrey regularity for the {V}lasov-{P}oisson system.
    \newblock {\em Ann. Inst. H. Poincar\'e{} C Anal. Non Lin\'eaire},
    38(4):1145--1165, 2021.

\end{thebibliography}
\end{document}